\newcommand{\Keywords}[1]{\par{\small{\em Keywords\/}: #1}}
\newtheorem{thm}{Theorem}[section]
\newtheorem{cor}[thm]{Corollary}
\newtheorem{rem}[thm]{Remark}
\newtheorem{prop}[thm]{Proposition}
\newtheorem{lem}[thm]{Lemma}
\newcommand{\ben}{\vspace{0mm}\begin{equation}}
\newcommand{\een}{\vspace{0mm}\end{equation}}
\newcommand{\be}{\vspace{0mm}\begin{equation*}}
\newcommand{\ee}{\vspace{0mm}\end{equation*}}
\newcommand{\ba}{\vspace{0mm}\begin{equation*}\begin{aligned}}
\newcommand{\ea}{\vspace{0mm}\end{aligned}\end{equation*}}
\newcommand{\ban}{\vspace{0mm}\begin{equation}\begin{aligned}}
\newcommand{\ean}{\vspace{0mm}\end{aligned}\end{equation}}
\numberwithin{equation}{section}
\title{\begin{center}\begin{LARGE}
Slow-fast stochastic diffusion dynamics and quasi-stationary distributions for diploid populations
\end{LARGE}
\end{center}}
\date{13/11/2012}
\author[1]{Camille Coron \thanks{coron@cmap.polytechnique.fr, corresponding author}}
\affil[1]{CMAP, \'Ecole Polytechnique, CNRS UMR 7641, Route de Saclay, 91128
Palaiseau Cedex, France}
\date{}
\begin{document}
\fontfamily{ptm} \selectfont
\maketitle

\begin{abstract}
We are interested in the long-time behavior of a diploid population with sexual reproduction, characterized by its genotype composition at one bi-allelic locus. The population is modeled by a $3$-dimensional birth-and-death process with competition, cooperation and Mendelian reproduction. This stochastic process is indexed by a scaling parameter $K$ that goes to infinity, following a large population assumption. When the birth and natural death parameters are of order $K$, the sequence of stochastic processes indexed by $K$ converges toward a slow-fast dynamics. We indeed prove the convergence toward $0$ of a fast variable giving the deviation of the population from Hardy-Weinberg equilibrium, while the sequence of slow variables giving the respective numbers of occurrences of each allele converges toward a $2$-dimensional diffusion process that reaches $(0,0)$ almost surely in finite time. We obtain that the population size and the proportion of a given allele converge toward a generalized Wright-Fisher diffusion with varying population size and diploid selection. Using a non trivial change of variables, we next study the absorption of this diffusion and its long time behavior conditioned on non-extinction. In particular we prove that this diffusion starting from any non-trivial state and conditioned on not hitting $(0,0)$ admits a unique quasi-stationary distribution. We finally give numerical approximations of this quasi-stationary behavior in three biologically relevant cases: neutrality, overdominance, and separate niches.
\end{abstract}

\Keywords{Diploid populations; Generalized Wright-Fisher diffusion processes; Stochastic slow-fast dynamical systems; Quasi-stationary distributions; Allele coexistence.}


\section{Introduction}\label{sectionintro}

We study the diffusion limit and quasi-stationary behavior of a population of diploid individuals modeled by a non-linear $3$-type birth-and-death process with competition, cooperation and Mendelian reproduction. Individuals are characterized by their genotype at one locus for which there exist $2$ alleles, $A$ and $a$. We study the genetic evolution of the population, i.e. the dynamics of the respective numbers of individuals with genotype $AA$, $Aa$, and $aa$. Following an infinite population size approximation (see also \cite{FournierMeleard2004} and \cite{Champagnat2006} for instance) we assume that the initial number of individuals is of order $K$ where $K$ is a scale parameter that will go to infinity. The population is then modeled by a $3$-type birth-and-death process denoted by $\nu^K=(\nu^K_t, t\geq0)$ and we consider the sequence of stochastic processes $Z^K=\nu^K/K$. At each time $t$ and for all $K$, we define the deviation $Y^K_t$ of the population $Z^K_t$ from a so-called Hardy-Weinberg equilibrium. We are interested in the convergence of the sequence of stochastic processes $Z^K$ when the individual birth and natural death rates are assumed to be both equivalent to $\gamma K$, with $\gamma>0$ (see Section \ref{sectionconvergencediffusion} and \cite{ChampagnatFerriereMeleard2006} for a biological interpretation). In Section \ref{sectionconvergencediffusion} we first establish some conditions on the competition and cooperation parameters so that the sequence of population sizes satisfies a moment propagation property. Next, we prove the convergence of the sequence of stochastic processes $Z^K$ toward a slow-fast dynamics (see \cite{MeleardTran2012} or \cite{Balletal2006} for other examples of such dynamics and \cite{Kurtz1992} and \cite{BerglundGentz2005} for treatments of slow-fast scales in diffusion processes). More precisely, we prove that for all $t>0$, the sequence of random variables $(Y^K_t)_{K\in\mathbb{N}^*}$ goes to $0$ when $K$ goes to infinity, while the sequence of processes $(N^K_t,X^K_t)_{t\geq0}$ giving respectively the population size and the proportion of allele $A$ converges in law toward a "slow" $2$-dimensional diffusion process $(N_t,X_t)_{t\geq0}$. This limiting diffusion $(N,X)$ can be seen as a generalized Wright-Fisher diffusion with varying population size and diploid selection. In Section \ref{sectionQSD}, we first find an appropriate change of variables $S=(f_1(N,X),f_2(N,X))$ such that $S$ is a Kolmogorov diffusion process evolving in a subset $\mathcal{D}$ of $\mathbb{R}^2$. We prove that the stochastic process $S$ is absorbed in the set $\mathbf{A}\cup\mathbf{B}\cup\mathbf{0}$ almost surely in finite time, where $\mathbf{A}$, $\mathbf{B}$ and $\mathbf{0}$ correspond respectively to the sets where $X=1$ (fixation of allele $A$), $X=0$ (fixation of allele $a$), and $N=0$ (extinction of the population). Next, following \cite{CattiauxCollet...2009} and \cite{CattiauxMeleard2009}, we study the quasi-stationary behavior of the diffusion process $(S_t)_{t\geq0}$ conditioned on the non extinction of the population, i.e. on not reaching $\mathbf{0}$. First, the diffusion process $(S_t)_{t\geq0}$ conditioned on not reaching $\mathbf{A}\cup\mathbf{B}\cup\mathbf{0}$ admits a Yaglom limit. Second, if $S_0\notin\mathbf{A}\cup\mathbf{B}\cup\mathbf{0}$ then the law of $S_t$ conditioned on $\{S_t\notin\mathbf{0}\}$ converges when $t$ goes to infinity toward a distribution which is independent from $S_0$. Finally in Section \ref{sectionnumerical}, we present numerical applications and study the long-time coexistence of the two alleles, in three biologically relevant cases: a pure competition neutral case, a case in which each genotype has its own ecological niche, and an overdominance case. In particular, we show that a long-term coexistence of alleles is possible even in some full competition cases, which is not true for haploid clonal reproduction (\cite{CattiauxMeleard2009}). Note that for the sake of simplicity, most proofs of this article are given in the main text for the neutral case where demographic parameters do not depend on the types of individuals, and the calculations for the non-neutral case are given in Appendix \ref{appendixQnonneutre}. 

\section{Model and deterministic limit}\label{sectionmodel}
\subsection{Model}
We consider a population of diploid hermaphroditic individuals characterized by their genotype at one bi-allelic locus, whose alleles are denoted by $A$ and $a$. Individuals can then have one of the three possible genotypes $AA$, $Aa$, and $aa$, (also called types $1$, $2$, and $3$). The population at any time $t$ is represented by a $3$-dimensional birth-and-death process giving the respective numbers of individuals with each genotype. As in \cite{FournierMeleard2004}, \cite{ChampagnatMeleard2011} or \cite{ColletMeleardMetz2012}, we consider an infinite population size approximation. To this end we introduce a scaling parameter $K\in\mathbb{N}^*$ that will go to infinity, and we denote by $\nu^K=((\nu^{1,K}_t,\nu^{2,K}_t,\nu^{3,K}_t),t\geq0)$ the population indexed by $K$. The initial numbers of individuals of each type $\nu_0^{1,K}$, $\nu_0^{2,K}$ and $\nu_0^{3,K}$ will be of order $K$ and we then consider the sequence of rescaled stochastic processes \be\left(Z^K_t\right)_{t\geq0}=\left(Z^{1,K}_t,Z^{2,K}_t,Z^{3,K}_t\right)
_{t\geq0}=\left(\frac{\nu^K_t}{K}\right)_{t\geq0}\ee that gives at each time $t$ the respective numbers of individuals weighted by $1/K$, and with genotypes $AA$, $Aa$, and $aa$. The rescaled population size at time $t$ is denoted by \ben\label{taille} N^K_t=Z^{1,K}_t+Z^{2,K}_t+Z^{3,K}_t\in\frac{\mathbb{Z}_+}{K},\een and the proportion of allele $A$ at time $t$ is denoted by 
\ben\label{proportion} X^K_t=\frac{2Z^{1,K}_t+Z^{2,K}}{2(Z^{1,K}_t+Z^{2,K}_t+Z^{3,K}_t)}.\een As in \cite{Coron2012} or \cite{ColletMeleardMetz2012}, the jump rates of $Z$ model Mendelian panmictic reproduction. More precisely, if we set $e_1=(1,0,0)$, $e_2=(0,1,0)$ and $e_3=(0,0,1)$, then for all $i\in\{1,2,3\}$, the rates $\lambda_i^K(Z)$ at which the stochastic process $Z^K$ jumps from $z=(z_1,z_2,z_3)\in\left(\frac{\mathbb{Z}_+}{K}\right)^3$ to $z+e_i/K$, as long as $z_1+z_2+z_3=n\neq0$, are given by:\ban\label{birthratesQSD} \lambda_1^K(z)&=\frac{Kb_1^K}{n}\left(z_1+\frac{z_2}{2}\right)^2,\\
\lambda_2^K(z)&=\frac{Kb_2^K}{n}2\left(z_1+\frac{z_2}{2}\right)\left(z_3+\frac{z_2}{2}\right),\\
\lambda_3^K(z)&=\frac{Kb_3^K}{n}\left(z_3+\frac{z_2}{2}\right)^2.\ean
These birth rates are naturally set to $0$ if $n=0$ and the demographic parameters $b_i^K\in\mathbb{R}_+$ are called birth demographic parameters. Now individuals can die naturally and either compete or cooperate with other individuals, depending on the genotype of each individual. More precisely, for all $i\in\{1,2,3\}$, the rates $\mu_i^K(z)$ at which the stochastic process $Z^K$ jumps from $z=(z_1,z_2,z_3)\in(\mathbb{Z}_+)^3/K$ to $z-e_i/K$ for $i\in\{1,2,3\}$ are given by:\ban\label{deathratesQSD} \mu_1^K(z)&=Kz_1(d_1^K+K(c_{11}^Kz_1+c_{21}^Kz_2+c_{31}^Kz_3))^+,\\
\mu_2^K(z)&=Kz_2(d_2^K+K(c_{12}^Kz_1+c_{22}^Kz_2+c_{32}^Kz_3))^+,\\
\mu_3^K(z)&=Kz_3(d_3^K+K(c_{13}^Kz_1+c_{23}^Kz_2+c_{33}^Kz_3))^+.\ean
where the interaction (competition or cooperation) demographic parameters $c_{ij}^K$ are arbitrary real numbers and $(x)^+=max(x,0)$ for any $x\in\mathbb{R}$. If $c_{ij}^K>0$ (resp. $c_{ij}^K<0$), then individuals with type $i$ have a negative (resp. positive) influence on individuals of type $j$. The demographic parameter $d_i^K\in\mathbb{R}_+$ is called the intrinsic death rate of individuals of type $i$. From now on, we say that the stochastic process $Z^K$ is neutral for a given $K\in\mathbb{N}^*$ if its demographic parameters do not depend on the types of individuals, i.e. \ben\label{neutre} b_i^K=b^K, \quad d_i^K=d^K\quad \text{ and } \quad c_{ij}^K=c^K\quad\forall i,j\in\{1,2,3\}.\een Note that for any fixed $K\in\mathbb{N}^*$, the pure jump process $Z^K$ is well defined for all $t\in\mathbb{R}_+$. Indeed, $N^K$ is stochastically dominated by a rescaled pure birth process $\overline{N}^K$ that jumps from $n\in\mathbb{Z}_+/K$ to $n+1/K$ at rate $(\underset{i}{\max}\;b_i^K)Kn$ and, from Theorem $10$ in \cite{VillemonaisMeleard2012}, $\overline{N}^K$ does not explode almost surely. The stochastic process $Z^K$ is then a $\frac{(\mathbb{Z}_+)^3}{K}$-valued pure jump Markov process absorbed at $(0,0,0)$, defined for all $t\geq0$ by \be Z^K_t=Z^K_0+\sum_{i\in\{1,2,3\}}\left[\int_0^t\frac{e_i}{K}\mathbf{1}_{\{\theta\leq \lambda_i^K(Z^K_{s^-})\}}\eta_{1}^i(ds,d\theta)-\int_0^t\frac{e_i}{K}\mathbf{1}_{\{\theta\leq \mu_i^K(Z^K_{s^-})\}}\eta_{2}^i(ds,d\theta)\right]\ee where the measures $\eta_{j}^i$ for $i\in\{1,2,3\}$ and $j\in\{1,2\}$ are independent Poisson point measures on $(\mathbb{R}_+)^2$ with intensity $dsd\theta$. For any $K$, the law of $Z^K$ is therefore a probability measure on the trajectory space $\mathbb{D}(\mathbb{R}_+,(\mathbb{Z}_+)^3/K)$ which is the space of left-limited and right-continuous functions from $\mathbb{R}_+$ to $(\mathbb{Z}_+)^3/K$, endowed with the Skorohod topology. Finally, the extended generator $L^K$ of $Z^K$ satisfies for every bounded measurable function $f$ from $(\mathbb{Z}_+)^3/K$ to $\mathbb{R}$ and for every $z\in\frac{(\mathbb{Z}_+)^3}{K}$: 
\ban \label{generateur}L^Kf(z)&
=\underset{i\in\{1,2,3\}}{\sum}\left[\lambda_i^K(z)\left(f\left(z+\frac{e_i}{K}\right)-f(z)\right)+\mu_i^K(z)\left(f\left(z-\frac{e_i}{K}\right)-f(z)\right)\right].\ean 

To end with the model description, let us introduce for all $K\in\mathbb{N}^*$ the stochastic processes $Y^K$ such that for every $t\geq0$, as long as $N^K_t>0$,\ben\label{defY}Y^K_t=\frac{4Z^{1,K}_tZ^{3,K}_t-(Z^{2,K}_t)^2}{4N^K_t}.\een If $N^K_t=0$, we set $Y^K_t=0$ as $\vert Y^K_t\vert\leq N^K_t$ for all $t\geq0$. This stochastic process will play a main role in the article and note first that: $$Y^K_t=Z^{1,K}_t-\frac{(2Z^{1,K}_t+Z^{2,K}_t)^2}{4N^K_t}=\left(p^{AA,K}_t-(p^{A,K}_t)^2\right)N^K_t$$ if $p^{A,K}_t$ (resp. $p^{AA,K}_t$) is the proportion of allele $A$ (resp. genotype $AA$) in the population at time $t$. Similarly, $$Y^K_t=\left(p^{aa,K}_t-(p^{a,K}_t)^2\right)N^K_t=\left(2p^{A,K}_tp^{a,K}_t-p^{Aa,K}_t\right)N^K_t.$$ Then if $Y^K_t=0$, the proportion of each genotype in the population $Z^K_t$ is equal to the proportion of pairs of alleles forming this genotype. By an abuse of language, if $Y^K_t=0$ we say that the population $Z^K_t$ is at Hardy-Weinberg equilibrium (\cite{CrowKimura1970}, p. $34$). In the rest of the article, we will see that the quantities of interest in this model are the population size $N^K$, the deviation from Hardy-Weinberg equilibrium $Y^K$ and the proportion $X^K$ of allele $A$. More precisely, the following lemma gives the change of variable:

\begin{lem}\label{cgtvar}
Let us set for all $z=(z_1,z_2,z_3)\in(\mathbb{R}_+)^3\setminus\{(0,0,0)\}$, \ba \phi_1(z)&=z_1+z_2+z_3,\quad\phi_2(z)=\frac{2z_1+z_2}{2(z_1+z_2+z_3)}\quad\text{and}\quad\phi_3(z)=\frac{4z_1z_3-(z_2)^2}{4(z_1+z_2+z_3)}.\ea
Then the function \ba \phi:(\mathbb{R}_+)^3\setminus\{(0,0,0)\}&\rightarrow E\\z&\mapsto \phi(z)=(\phi_1(z),\phi_2(z),\phi_3(z))\ea where $E=\{(n,x,y)|n\in\mathbb{R}_+^*,x\in[0,1],-n\min(x^2,(1-x)^2)\leq y\leq nx(1-x)\}$ is a bijection.
\end{lem}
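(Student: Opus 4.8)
The plan is to prove bijectivity by exhibiting an explicit two-sided inverse rather than verifying injectivity and surjectivity separately. The paper's own remarks that $Y^K_t = (p^{AA}-(p^A)^2)N = z_1 - nx^2$, together with the symmetric identity giving $z_3 - n(1-x)^2$, already point to the candidate: define
\[
\psi(n,x,y) = \bigl(\,nx^2 + y,\; 2nx(1-x) - 2y,\; n(1-x)^2 + y\,\bigr).
\]
First I would check that $\psi$ is genuinely inverse to $\phi$ by direct substitution, the whole computation being organized around the single algebraic fact $[x+(1-x)]^2=1$. Writing $z=\psi(n,x,y)$, one gets $z_1+z_2+z_3 = n\,[x+(1-x)]^2 = n$ and $2z_1+z_2 = 2nx\,[x+(1-x)] = 2nx$, so $\phi_1(z)=n$ and $\phi_2(z)=x$; the numerator $4z_1z_3 - z_2^2$ collapses, again by $[x+(1-x)]^2=1$, to $4ny$, giving $\phi_3(z)=y$. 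The reverse composition $\psi(\phi(z))=z$ follows from the same identities read backwards. This is all routine algebra (note that on the domain $n>0$, so every denominator appearing in $\phi$ is positive and $\phi$ is well defined).

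Next I would verify that the two maps respect the stated domains, which is where the precise description of $E$ earns its keep. For $\psi$ I must show $\psi(E)\subseteq(\mathbb{R}_+)^3\setminus\{(0,0,0)\}$: the three nonnegativity requirements $nx^2+y\geq0$, $2nx(1-x)-2y\geq0$ and $n(1-x)^2+y\geq0$ are exactly the inequalities $y\geq -nx^2$, $y\leq nx(1-x)$ and $y\geq -n(1-x)^2$, and the two lower bounds combine into $y\geq -n\min(x^2,(1-x)^2)$ --- precisely the constraints defining $E$. Since the three coordinates sum to $n>0$, the image is never $(0,0,0)$. Conversely, for $\phi(z)\in E$ I would check, for $z$ in the domain, that $n>0$ and $x\in[0,1]$ are immediate (the latter because $0\leq 2z_1+z_2\leq 2n$, using $2n-(2z_1+z_2)=z_2+2z_3\geq0$), and then establish the two $y$-bounds: the upper bound from the identity $nx(1-x)-\phi_3(z)=z_2/2\geq0$, and the lower bounds from $\phi_3(z)+nx^2=z_1\geq0$ and $\phi_3(z)+n(1-x)^2=z_3\geq0$.

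These two containments together with the identity checks show that $\phi$ and $\psi$ are mutually inverse bijections between $(\mathbb{R}_+)^3\setminus\{(0,0,0)\}$ and $E$, which is the claim. I expect the only delicate point to be the boundary bookkeeping: matching each defining inequality of $E$ with the correct sign and confirming that $\min(x^2,(1-x)^2)$ arises correctly as the binding lower constraint, since it is the tighter of the two bounds $y\geq -nx^2$ and $y\geq -n(1-x)^2$ coming respectively from $z_1\geq0$ and $z_3\geq0$. Everything else reduces to the short computation built on $[x+(1-x)]^2=1$.
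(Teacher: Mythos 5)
Your proof is correct and takes essentially the same route as the paper: the paper also establishes bijectivity via the explicit inverse $z_1=nx^2+y$, $z_2=2nx(1-x)-2y$, $z_3=n(1-x)^2+y$, checking that this formula characterizes $\phi^{-1}$ and that it maps $E$ into $(\mathbb{R}_+)^3\setminus\{(0,0,0)\}$. Your write-up merely spells out the algebra and the boundary bookkeeping that the paper leaves as ``we easily obtain.''
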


Note that $\phi(Z^K)=(N^K,X^K,Y^K),$ where $N^K$, $X^K$, and $Y^K$ have been respectively defined in Equations \eqref{taille}, \eqref{proportion} and \eqref{defY}. 

\begin{proof} We easily obtain that $(n,x,y)=\phi(z_1,z_2,z_3)$ if and only if 
\ben\label{inverse} z_1=nx^2+y,\quad z_2=2nx(1-x)-2y\quad\text{and}\quad z_3=n(1-x)^2+y,\een which gives the injectivity. Next, for any $(n,x,y)$ such that $n\in\mathbb{R}_+^*$, $x\in[0,1]$ and $-n\min(x^2,(1-x)^2)\leq y\leq nx(1-x)$, $z=(z_1,z_2,z_3)$ defined by Equation \eqref{inverse} is in $(\mathbb{R}_+)^3\setminus\{(0,0,0)\}$ which gives the surjectivity.
\end{proof}

\subsection{Convergence toward a deterministic system}\label{ssectiondeterm}

This section aims at understanding the behavior of the population when the birth and natural death parameters do not depend on $K$. The results obtained at this scaling will indeed give an intuition of the behavior of the population when $b^{i,K}$ and $d^{i,K}$ are of order $K$, which is studied in Section \ref{sectionconvergencediffusion}. In particular, we prove in this section a long-time convergence of the population toward Hardy-Weinberg equilibrium. 

We consider a particular case of the scaling considered in Section $3$ of \cite{ColletMeleardMetz2012}. More precisely, we set: \ba
b_i^K&=\beta\in\mathbb{R}_+^*\\
d_i^K&=\delta\in\mathbb{R}_+\\
Kc_{ij}^K&=\alpha\in\mathbb{R}_+\\
Z^K_0&\underset{K \to \infty}{\longrightarrow} \mathcal{Z}_0\quad\text{in law,}
\ea where $\mathcal{Z}_0$ is a deterministic vector of $(\mathbb{R}_+)^3$. Note that the process $Z^K$ is neutral for all $K\in\mathbb{N}^*$. For all $i\in\{1,2,3\}$ and $z=(z_1,z_2,z_3)\in(\mathbb{R}_+)^3$ we now denote by $\lambda^{\infty}_i(z)$ (resp. $\mu^{\infty}_i(z)$) the limit of the rescaled birth (resp. death) rate (see Equations \eqref{birthratesQSD} and \eqref{deathratesQSD}): $$\lambda_i^{\infty}(z)=\underset{K\rightarrow\infty}{\lim}\frac{\lambda_i^K(z)}{K}\quad\text{and}\quad\mu_i^{\infty}(z)=\underset{K\rightarrow\infty}{\lim}\frac{\mu_i^K(z)}{K}.$$ For instance, if we set $(n,x,y)=\phi(z)$ where $\phi$ has been defined in Lemma \ref{cgtvar}, we get $$\lambda_1^{\infty}(z)=\beta nx^2\quad\text{and}\quad \mu_1^{\infty}(z)=(\delta+\alpha n)z_1.$$ Here, Proposition $3.2$ in \cite{ColletMeleardMetz2012} (see also Theorem $5.3$ of \cite{FournierMeleard2004}) gives that for all $T>0$, the sequence of stochastic processes $(Z^K_t,t\in[0,T])_{K\in\mathbb{Z}_+^*}$ converges in law in $\mathbb{D}([0,T],(\mathbb{R}_+)^3)$ toward a deterministic limit $\mathcal{Z}=(\mathcal{Z}^1,\mathcal{Z}^2,\mathcal{Z}^3)$, which is the unique continuous solution of the differential system: \ben \label{systemedeterministe}
\left\{\begin{tabular}{l}
$\frac{d\mathcal{Z}^1_t}{dt}=\lambda_1^{\infty}(\mathcal{Z}_t)-\mu_1^{\infty}(\mathcal{Z}_t)$\\ 
$\frac{d\mathcal{Z}^2_t}{dt}=\lambda_2^{\infty}(\mathcal{Z}_t)-\mu_2^{\infty}(\mathcal{Z}_t)$\\ 
$\frac{d\mathcal{Z}^3_t}{dt}=\lambda_3^{\infty}(\mathcal{Z}_t)-\mu_3^{\infty}(\mathcal{Z}_t).$ 
\end{tabular}\right. \een A solution of this system does not appear immediately, but using the change of variables $\phi$ introduced in Lemma \ref{cgtvar}, we obtain that $\phi(\mathcal{Z})=(\mathcal{N},\mathcal{X},\mathcal{Y})$ satisfies the

\begin{prop}\label{propsolutiondeterministe}
\begin{description}
\item[$(i)$] If $\beta=\delta$ then \ben\label{formuleNdeterm1} \mathcal{N}_t=\frac{\mathcal{N}_0}{\alpha\mathcal{N}_0t+1},\quad\forall t\geq0.\een Else \ben\label{formuleNdeterm} \mathcal{N}_t=\frac{(\beta-\delta)\mathcal{N}_0e^{(\beta-\delta)t}}{(\beta-\delta)+\alpha\mathcal{N}_0(e^{(\beta-\delta)t}-1)}\quad\forall t\geq0.\een
\item[$(ii)$] For all $t\geq0$, $\mathcal{X}_t=\mathcal{X}_0$.
\item[$(iii)$] If $\alpha=0$ then $\mathcal{Y}_t=\mathcal{Y}_0e^{-\delta t}$ for all $t\geq0.$ 

If $\alpha\neq0$ and $\beta=\delta$ then $ \mathcal{Y}_t=\left(\mathcal{Y}_0-\ln\left(1+\alpha\mathcal{N}_0t\right)\right)e^{-\delta t}$ for all $t\geq0$. 

If $\alpha\neq0$, $\beta\neq\delta$ and $\mathcal{N}_0=\frac{\beta-\delta}{\alpha}$, then $\mathcal{Y}_t=\mathcal{Y}_0e^{-\beta t}$ for all $t\geq0.$ 

Finally if $\alpha\neq0$, $\beta\neq\delta$ and $\mathcal{N}_0\neq\frac{\beta-\delta}{\alpha}$ then for all $t\geq0$ and if $C=\frac{\mathcal{Y}_0}{1-\frac{\alpha\mathcal{N}_0}{\beta-\delta}}$, $\mathcal{Y}_t=Ce^{-\delta t}\left(1-\frac{\alpha\mathcal{N}_0e^{(\beta-\delta)t}}{(\beta-\delta)+\alpha\mathcal{N}_0(e^{(\beta-\delta)t}-1)}\right)$ for all $t\geq0$
\end{description}
\end{prop}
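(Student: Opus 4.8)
The plan is to transport the autonomous system \eqref{systemedeterministe} into the coordinates $(\mathcal{N},\mathcal{X},\mathcal{Y})=\phi(\mathcal{Z})$ given by Lemma \ref{cgtvar}, solve the three resulting scalar equations, and read off $(i)$--$(iii)$. The first step is to express the limiting rates in these coordinates. Writing $(n,x,y)=\phi(z)$ and using $z_1+z_2/2=nx$ and $z_3+z_2/2=n(1-x)$, the birth rates become $\lambda_1^{\infty}(z)=\beta nx^2$, $\lambda_2^{\infty}(z)=2\beta nx(1-x)$ and $\lambda_3^{\infty}(z)=\beta n(1-x)^2$, while the neutral scaling $Kc_{ij}^K=\alpha$, $d_i^K=\delta$ gives $\mu_i^{\infty}(z)=(\delta+\alpha n)z_i$ for each $i$.

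Summing the three lines of \eqref{systemedeterministe}, the birth terms combine through $x^2+2x(1-x)+(1-x)^2=1$ into $\beta\mathcal{N}$, and the death terms into $(\delta+\alpha\mathcal{N})\mathcal{N}$, so that $\dot{\mathcal{N}}=(\beta-\delta)\mathcal{N}-\alpha\mathcal{N}^2$. This is a Bernoulli equation: separating variables when $\beta=\delta$ produces \eqref{formuleNdeterm1}, and the substitution $\mathcal{N}\mapsto 1/\mathcal{N}$ linearises it when $\beta\neq\delta$ and yields \eqref{formuleNdeterm}, which settles $(i)$. For $(ii)$ I would differentiate $u:=2\mathcal{Z}^1+\mathcal{Z}^2=2\mathcal{N}\mathcal{X}$; the birth contributions collapse to $2\beta nx=\beta u$, so $\dot u=(\beta-\delta-\alpha\mathcal{N})u$, the same logarithmic derivative as $\mathcal{N}$. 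Hence $u/\mathcal{N}$, and therefore $\mathcal{X}=u/(2\mathcal{N})$, is constant, i.e. $\mathcal{X}_t=\mathcal{X}_0$.

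The crux is the equation for $\mathcal{Y}$. Using the identity $\mathcal{Y}=\mathcal{Z}^1-\mathcal{N}\mathcal{X}^2$ noted after \eqref{defY} together with $(ii)$, I would compute $\dot{\mathcal{Y}}=\dot{\mathcal{Z}}^1-\mathcal{X}^2\dot{\mathcal{N}}$. Substituting the rates and the logistic equation for $\mathcal{N}$, the two $\beta$-terms cancel exactly and one is left with the linear, homogeneous, non-autonomous equation $\dot{\mathcal{Y}}=-(\delta+\alpha\mathcal{N}_t)\mathcal{Y}$. Equivalently, differentiating $4\mathcal{N}\mathcal{Y}=4\mathcal{Z}^1\mathcal{Z}^3-(\mathcal{Z}^2)^2$ and invoking the perfect-square identity $x^2z_3+(1-x)^2z_1-x(1-x)z_2=y$ (a rewriting of $(x+(1-x))^2=1$ via \eqref{inverse}) shows the birth terms enter only through $\beta n y$ and cancel against the $\dot{\mathcal{N}}\,\mathcal{Y}$ contribution, confirming the same reduced equation.

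Solving by the integrating factor gives $\mathcal{Y}_t=\mathcal{Y}_0\exp\!\big(-\delta t-\alpha\int_0^t\mathcal{N}_s\,ds\big)$, and the four cases of $(iii)$ arise from evaluating this integral in each regime. When $\alpha=0$ the integral vanishes and $\mathcal{Y}_t=\mathcal{Y}_0e^{-\delta t}$. When $\alpha\neq0$ I would recognise from the explicit $\mathcal{N}_s$ of $(i)$ that $\alpha\mathcal{N}_s$ is precisely the logarithmic derivative of the denominator appearing in \eqref{formuleNdeterm1}--\eqref{formuleNdeterm}, so $\alpha\int_0^t\mathcal{N}_s\,ds$ is an elementary logarithm and the integrating factor is the reciprocal of that (normalised) denominator; this directly produces the compact non-degenerate formula, while the special value $\mathcal{N}_0=(\beta-\delta)/\alpha$ is the equilibrium at which $\mathcal{N}$ is constant, $\delta+\alpha\mathcal{N}_t\equiv\beta$, giving $\mathcal{Y}_t=\mathcal{Y}_0e^{-\beta t}$, and the remaining $\beta=\delta$ regime is recovered by the same quadrature (or the limit $\beta\to\delta$). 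I expect the only genuinely delicate point to be the algebraic cancellation that collapses the $\mathcal{Y}$-equation to a linear homogeneous ODE; once that reduction is in hand, every remaining step is a routine quadrature.
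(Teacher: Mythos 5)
Your derivation follows essentially the same route as the paper: transport the system into the $(\mathcal{N},\mathcal{X},\mathcal{Y})$ coordinates, obtain the logistic equation for $\mathcal{N}$, the constancy of $\mathcal{X}$, and the linear homogeneous equation $\dot{\mathcal{Y}}_t=-(\delta+\alpha\mathcal{N}_t)\mathcal{Y}_t$. The only methodological difference is that you solve this last equation by the explicit quadrature $\mathcal{Y}_t=\mathcal{Y}_0\exp\bigl(-\delta t-\alpha\int_0^t\mathcal{N}_s\,ds\bigr)$, whereas the paper treats each regime with a separate variation-of-constants ansatz; your single integrating factor is cleaner, and all of your intermediate computations (the cancellation of the $\beta$-terms in $\dot{\mathcal{Y}}$, the identification of $\alpha\mathcal{N}_s$ as the logarithmic derivative of the denominator of $\mathcal{N}_s$) are correct.

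There is, however, one point you cannot leave implicit. You assert that the quadrature ``recovers'' all four stated cases of $(iii)$, and in the case $\alpha\neq0$, $\beta=\delta$ it does not: there $\alpha\int_0^t\mathcal{N}_s\,ds=\ln(1+\alpha\mathcal{N}_0t)$, so your formula yields $\mathcal{Y}_t=\mathcal{Y}_0e^{-\delta t}/(1+\alpha\mathcal{N}_0t)$, which is not the proposition's $(\mathcal{Y}_0-\ln(1+\alpha\mathcal{N}_0t))e^{-\delta t}$. The discrepancy is not on your side: the stated expression does not satisfy $\dot{\mathcal{Y}}=-(\delta+\alpha\mathcal{N}_t)\mathcal{Y}$ (its derivative equals $-\delta\mathcal{Y}_t-\alpha\mathcal{N}_te^{-\delta t}$, i.e.\ it solves the equation obtained by dropping the factor $C(t)$ in the step $C'(t)=-\alpha\mathcal{N}_tC(t)$ of the paper's ansatz), and your expression is also the $\beta\to\delta$ limit of the non-degenerate formula, which you correctly checked agrees with the paper's last case. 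So rather than claiming the quadrature reproduces the statement verbatim, you should write out the formula your method actually produces and flag the inconsistency with the statement; as written, the proposal papers over the one sub-case where your (correct) computation and the proposition disagree.
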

\begin{proof} $\mathcal{N}$ is solution of the logistic equation $d\mathcal{N}_t/dt=(\beta-\delta-\alpha \mathcal{N}_t)\mathcal{N}_t$ whose unique solution is given for $\beta\neq\delta$ and $\alpha\neq0$ in \cite{Verhulst1844}. We then have Equation \eqref{formuleNdeterm} that remains true if $\alpha=0$ and $\beta\neq\delta$. If $\beta=\delta$ we easily find that the unique solution of the equation $d\mathcal{N}_t/dt=-\alpha \mathcal{N}_t^2$ is given by Equation \eqref{formuleNdeterm1}. Therefore, $\mathcal{N}_t>0$ for all $t\geq0$. Then, using the System of equations \eqref{systemedeterministe}, we find that $d\mathcal{X}_t/dt=0$ for all $t\geq0$ which gives $(ii)$. Finally for $(iii)$, $\mathcal{Y}$ is solution of the differential equation $d\mathcal{Y}_t/dt=-(\delta+\alpha\mathcal{N}_t)\mathcal{Y}_t$. If $\alpha=0$ the solution is clear. If $\alpha\neq0$ and $\beta=\delta$ we find the result by looking for a solution of the form $C(t)e^{-\delta t}$ and from Equation \eqref{formuleNdeterm1}. If $\alpha\neq0$, $\beta\neq\delta$ and $\mathcal{N}_0=\frac{\beta-\delta}{\alpha}$, then from Equation \eqref{formuleNdeterm}, $\mathcal{N}_t=\mathcal{N}_0$ for all $t$ and the solution follows. Finally if $\alpha\neq0$, $\beta\neq\delta$ and $\mathcal{N}_0\neq\frac{\beta-\delta}{\alpha}$, looking for a solution of the form $\mathcal{Y}_t=Ce^{-\delta t}+\frac{B(t)e^{(\beta-\delta)t}}{(\beta-\delta)+\alpha\mathcal{N}_0(e^{(\beta-\delta)t}-1)}$ with $B(t)=De^{-\delta t}$ we find the result. 
\end{proof}

\bigskip Note that, in this scaling, the population does no get extinct in finite time and the proportion of allele $A$ remains constant. Besides, $\mathcal{Y}_t$ goes to $0$ when $t$ goes to infinity, which gives a long-time convergence of the population toward Hardy-Weinberg equilibrium. We therefore observe biodiversity conservation but can not study the Darwinian evolution of the population, since none of the two alleles will eventually disappear. These points are due to the fact that the population is neutral and to the large population size assumption (\cite{CrowKimura1970}, p. $34$). 

\section{Convergence toward a slow-fast stochastic dynamics}\label{sectionconvergencediffusion}

In this section, we investigate a new scaling under which the population size and proportion of allele $a$ evolve stochastically with time (in particular the population can get extinct and one of the two alleles can eventually get fixed), while the population still converges rapidly toward Hardy-Weinberg equilibrium. The results we obtain then provide a rigorous justification of the assumption of Hardy-Weinberg equilibrium which is often made when studying large populations. However we will explain that this result does not mean that the genetic composition of a diploid population can always be reduced to a set of alleles. 

We assume that birth and natural death parameters are of order $K$, while $Z^K_0$ converges in law toward a random vector $Z_0$. More precisely, we set for $\gamma>0$: \ba
b^{i,K}&= \gamma K+\beta_i\in[0,\infty[\\
d^{i,K}&= \gamma K+\delta_i\in[0,\infty[\\
c_{ij}^K&=\frac{\alpha_{ij}}{K}\in\mathbb{R}\\
Z^K_0&\underset{K\rightarrow\infty}{\rightarrow} Z_0 \quad\text{ in law,}
\ea where $Z_0$ is a $(\mathbb{R}_+)^3$-valued random variable. This means that the birth and natural death events are now happening faster, which will introduce some stochasticity in the limiting process. The results presented in Proposition \ref{propsolutiondeterministe} suggest that under these conditions, $Y^K$ will be a "fast" variable that converges directly toward the long time equilibrium of $\mathcal{Y}$ (equal to $0$), while $X^K$ and $N^K$ will be "slow" variables, converging toward a non deterministic process. First, we need a moment propagation property. It is not true for all values of the interaction parameters $\alpha_{ij}$ and in particular when $\alpha_{ii}\leq0$ for any $i\in\{1,2,3\}$, i.e. when individuals with a same given genotype cooperate or do not compete. For any $z=(z_1,z_2,z_3)\in(\mathbb{R}_+)^3$, let $g(z)=\underset{i,j\in\{1,2,3\}}{\sum}\alpha_{ij}z_iz_j.$ We establish the following:

\begin{prop}\label{propgpositif}
If $g(z)>0$ for all $z\in\left(\mathbb{R}_+\right)^3\setminus\{(0,0,0)\}$ and if, for any $k\in\mathbb{Z}_+$, there exists a constant $C_0$ such that for all $K\in\mathbb{N}^*$, $\mathbb{E}((N^K_0)^k))\leq C_0$, then
\begin{description}
\item[(i)] There exists a constant $C$ such that $\underset{K}{\sup}\;\underset{t\geq0}{\sup}\;\mathbb{E}((N^K_t)^k))\leq C.$
\item[(ii)] For all $T<+\infty$, there exists a constant $C_T$ such that $\underset{K}{\sup}\;\mathbb{E}\left(\underset{t\leq T}{\sup}\;(N^K_t)^k\right)\leq C_T.$
\end{description}
\end{prop}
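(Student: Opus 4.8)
The plan is to reduce both assertions to a single Foster--Lyapunov drift inequality for powers of the population size. Write $n=z_1+z_2+z_3$, set $f_k(z)=n^k$, and let $\lambda^K=\sum_i\lambda_i^K$, $\mu^K=\sum_i\mu_i^K$ be the total birth and death rates. The first step is to compute these under the present scaling $b_i^K=\gamma K+\beta_i$, $d_i^K=\gamma K+\delta_i$, $c_{ij}^K=\alpha_{ij}/K$. Using the identity $(z_1+z_2/2)+(z_3+z_2/2)=n$ in \eqref{birthratesQSD}, the birth rate collapses to $\lambda^K(z)=\gamma K^2 n+\frac{K}{n}\tilde\beta(z)$, where $\tilde\beta$ is a quadratic form satisfying $\tilde\beta(z)\le Cn^2$. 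From \eqref{deathratesQSD}, the elementary bounds $x\le (x)^+\le|x|$ give $\gamma K^2 n+Kg(z)-CKn\le\mu^K(z)\le\gamma K^2 n+CKn^2$. The decisive feature is that the $O(K^2)$ contributions $\gamma K^2 n$ cancel in the difference, leaving $\lambda^K(z)-\mu^K(z)\le K\bigl(C_1 n-g(z)\bigr)$ and $|\lambda^K(z)-\mu^K(z)|\le CKn^2$. Finally, since $g$ is homogeneous of degree two and strictly positive on the compact simplex $\{u\in(\mathbb{R}_+)^3:\ u_1+u_2+u_3=1\}$, one has $g(z)\ge c\,n^2$ for some $c>0$; this is exactly where the hypothesis $g>0$ enters.

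Next I would Taylor-expand. Since a type-$i$ birth changes $n$ by $+1/K$ and a death by $-1/K$, expanding \eqref{generateur} yields $L^Kf_k(z)=(\lambda^K-\mu^K)\frac{k}{K}n^{k-1}+(\lambda^K+\mu^K)\frac{k(k-1)}{2K^2}n^{k-2}+R_K(z)$. By the previous step the first-order term is at most $kC_1 n^k-kc\,n^{k+1}$, and the second-order term is at most $\gamma k(k-1)n^{k-1}+C'n^k/K$. For the remainder, the odd-order contributions carry the factor $\lambda^K-\mu^K$ and the even-order ones the factor $\lambda^K+\mu^K$; since $|\lambda^K-\mu^K|\le CKn^2$ and $0\le\lambda^K+\mu^K\le CK^2 n+CKn^2$, one gets $|R_K(z)|\le C(n^k/K+n^{k-1}/K^2)$. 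All positive contributions are therefore polynomials in $n$ of degree at most $k$ with coefficients bounded uniformly for $K\ge K_0$, while the unique degree-$(k+1)$ term is the negative $-kc\,n^{k+1}$. Hence $-kc\,n^{k+1}+(\text{lower order})$ is bounded above, which gives $L^Kf_k(z)\le A-Bn^k$ with $A,B>0$ independent of $K\ge K_0$; the finitely many $K<K_0$ are absorbed by enlarging $A$.

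Statement (i) then follows by a standard argument: applying Dynkin's formula up to $\tau_M=\inf\{t:\ N^K_t\ge M\}$ — legitimate because $N^K$ does not explode, being dominated by the pure birth process of the Introduction — the map $u(t)=\mathbb{E}[(N^K_{t\wedge\tau_M})^k]$ satisfies $u'(t)\le A-Bu(t)$, so Gronwall gives $u(t)\le\max(\mathbb{E}[(N^K_0)^k],A/B)$, and letting $M\to\infty$ by Fatou together with the uniform bound on $\mathbb{E}[(N^K_0)^k]$ yields $\sup_K\sup_{t\ge0}\mathbb{E}[(N^K_t)^k]\le C$. Since $k$ was arbitrary, (i) holds at every order. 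For (ii) I would use the semimartingale decomposition $(N^K_t)^k=(N^K_0)^k+\int_0^t L^Kf_k(Z^K_s)\,ds+M^K_t$. The bound $|L^Kf_k|\le C(1+n^{k+1})$ and (i) at order $k+1$ control $\mathbb{E}\int_0^T|L^Kf_k(Z^K_s)|\,ds$ uniformly in $K$, while the martingale part is treated by Burkholder--Davis--Gundy: with jumps $(n\pm1/K)^k-n^k=O(n^{k-1}/K)$ and rates $\lambda^K,\mu^K=O(K^2 n)$, the predictable quadratic variation obeys $\langle M^K\rangle_T\le C\int_0^T(n^{2k-1}+n^{2k}/K)\,ds$, so $\mathbb{E}[\sup_{t\le T}|M^K_t|]\le C\,\mathbb{E}[\langle M^K\rangle_T]^{1/2}$ is bounded uniformly in $K$ by (i) at order $2k$. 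Summing the three contributions gives $\sup_K\mathbb{E}[\sup_{t\le T}(N^K_t)^k]\le C_T$.

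I expect the main obstacle, and the heart of the proof, to be the drift computation of the first two paragraphs: one must verify the exact cancellation of the $O(K^2)$ birth and death rates and then track every remainder term — including the truncation $(\cdot)^+$ in \eqref{deathratesQSD}, which fortunately only enters favorably through $x\le(x)^+$ — carefully enough to reach the single clean inequality $L^Kf_k\le A-Bn^k$ with constants genuinely uniform in $K$. Once that inequality is in hand, both (i) and (ii) reduce to the routine Gronwall and Burkholder--Davis--Gundy estimates above.
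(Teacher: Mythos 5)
Your proposal is correct in substance but takes a genuinely different route from the paper. Both arguments hinge on the same key observation --- that $g$, being a positive quadratic form on $(\mathbb{R}_+)^3\setminus\{(0,0,0)\}$, satisfies $g(z)\geq m\,n^2$ with $m>0$ by compactness of the unit simplex --- but the paper uses it only to stochastically dominate $N^K$ by a one-dimensional logistic birth-and-death process $\overline{N}^K$ with up-rate $(\gamma K+\sup_i\beta_i)Kn$ and down-rate $(\gamma K+\inf_i\delta_i+mn)Kn$, and then cites Lemma $1$ of Champagnat (2006) for \textbf{(i)} and the proof of Theorem $5.3$ of Fournier--M\'el\'eard (2004) for \textbf{(ii)}, applied to $\overline{N}^K$. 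You instead run the Foster--Lyapunov computation directly on $N^K$ (cancellation of the $\gamma K^2 n$ terms, drift bound $L^Kf_k\leq A-Bn^k$ uniform in $K$, Dynkin--Gronwall for \textbf{(i)}, semimartingale decomposition plus Burkholder--Davis--Gundy for \textbf{(ii)}); this is essentially the content of the cited references, so your version buys self-containedness at the cost of length, while the paper's domination reduces a three-type process to a well-studied one-dimensional one. One technical wrinkle in your write-up of \textbf{(i)}: for the localized quantity $u(t)=\mathbb{E}[(N^K_{t\wedge\tau_M})^k]$ the differential inequality $u'(t)\leq A-Bu(t)$ does not follow as stated, because on $\{t\geq\tau_M\}$ the stopped value $(N^K_{\tau_M})^k\geq M^k$ no longer contributes the negative drift, and the error term does not obviously vanish as $M\to\infty$. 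The standard repair is available from your own estimates: first use the crude bound $L^Kf_k\leq A$ to get $\mathbb{E}[(N^K_t)^k]\leq C_0+At<\infty$ for every fixed $t$ and every $k$, which (via the order-$2k$ moment bound on the quadratic variation) justifies Dynkin's formula for the unstopped process, and then apply Gronwall to the unstopped $u$. With that adjustment the argument is complete.
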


\begin{proof} Assume that $g(z)>0$ for all $z\in\left(\mathbb{R}_+\right)^3\setminus\{(0,0,0)\}$. Note that $g$ writes $$g(z)=\phi_1(z)^2\underset{i,j\in\{1,2,3\}}{\sum}\alpha_{ij}p_ip_j:=\phi_1(z)^2f(p_1,p_2,p_3)$$ where $\phi_1$ has been defined in Lemma \ref{cgtvar} and $p_i=z_i/\phi_1(z)$ for all $i\in\{1,2,3\}$. If $g(z)>0$ for all $z\in\left(\mathbb{R}_+\right)^3\setminus\{(0,0,0)\}$, the function $f$ is then non-negative and continuous on $\{(p_1,p_2,p_3)\in[0,1]^3|p_1+p_2+p_3=1\}$ which is a compact set. Then $f$ reaches its minimum $m\geq0$. Now if there exists $(p_1,p_2,p_3)$ such that $f(p_1,p_2,p_3)=0$ then for any $n>0$, $g(np_1,np_2,np_3)=0$ and $(np_1,np_2,np_3)\in\left(\mathbb{R}_+\right)^3\setminus\{(0,0,0)\}$ which is impossible. Then $m>0$, $g(z)\geq m\phi_1(z)^2$, and $\mu_1^K(z)+\mu_2^K(z)+\mu_3^K(z)\geq(\gamma K+\underset{i}{\inf}\delta_i+m\phi_1(z))K\phi_1(z)$ for all $z\in(\mathbb{R}_+)^3$. Then, for all $K$, $N^K$ is stochastically dominated by the logistic birth-and-death process $\overline{N}^K$ jumping from $n\in\mathbb{Z}_+/K$ to $n+1/K$ at rate $(\gamma K+\underset{i\in\{1,2,3\}}{\sup}\;\beta_i)Kn$ and from $n$ to $n-1/K$ at rate $(\gamma K+\underset{i\in\{1,2,3\}}{\inf}\delta_i+m n)Kn$. Finally, the sequence of stochastic processes $\overline{N}^K$ satisfies \textbf{(i)} and \textbf{(ii)}, which gives the result (see respectively Lemma $1$ of \cite{Champagnat2006} and the proof of Theorem $5.3$ of \cite{FournierMeleard2004}). 
\end{proof}

\bigskip From now we assume the following hypotheses: \ben\label{hyp1}\tag{H1} g(z)>0 \quad\text{for all}\quad z\in\left(\mathbb{R}_+\right)^3\setminus\{(0,0,0)\},\een and a $3$-rd-order moments conditions: \ben\label{hyp2}\tag{H2}\text{there exists } C<\infty \quad\text{such that}\quad\underset{K}{\sup}\;\mathbb{E}((N^K_0)^3))\leq C.\een 
In Section \ref{sectionQSD} we will consider only the symmetrical case where $\alpha_{ij}=\alpha_{ji}$ for all $i,j$ and give some explicit sufficient conditions on the parameters $\alpha_{ij}$ so that \eqref{hyp1} is true. 

\bigskip
The following proposition gives that $(Y^K_t,t\geq0)$ is a fast variable that converges toward the deterministic value $0$ when $K$ goes to infinity.

\begin{prop} \label{propY} Under \eqref{hyp1} and \eqref{hyp2}, for all $s,t>0$, $\underset{t\leq u\leq t+s}{\sup}\mathbb{E}((Y^K_u)^2)\rightarrow 0$ when $K$ goes to infinity.
\end{prop}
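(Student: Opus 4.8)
The plan is to compute the action of the generator $L^K$ on the function $z\mapsto (\phi_3(z))^2 = (Y^K)^2$ and show that its drift forces $\mathbb{E}((Y^K_u)^2)$ to decay fast, on a time scale of order $1/K$, so that after any fixed positive time $t$ the second moment is already small. Recall from the deterministic analysis (Proposition \ref{propsolutiondeterministe}(iii)) that $\mathcal{Y}$ relaxes to $0$ at exponential rate governed by $\delta+\alpha\mathcal{N}$; here, since birth and death rates are of order $\gamma K$, the analogous relaxation rate for $Y^K$ should be of order $\gamma K$. The key computation is therefore to estimate $L^K(\phi_3^2)(z)$ and exhibit a bound of the form
\be
L^K\big((\phi_3)^2\big)(z) \;\leq\; -c\,K\,(\phi_3(z))^2 \;+\; R^K(z),
\ee
where $c>0$ and the remainder $R^K(z)$ collects the lower-order (in $K$) terms and the "diffusive" contribution coming from the discreteness of the jumps.

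First I would write $Y^K = \phi_3(Z^K)$ and use the explicit jump rates \eqref{birthratesQSD}–\eqref{deathratesQSD} together with $b^{i,K}=\gamma K+\beta_i$, $d^{i,K}=\gamma K+\delta_i$, $c_{ij}^K=\alpha_{ij}/K$ to expand $L^K(\phi_3^2)$. For each of the six jumps $z\mapsto z\pm e_i/K$ one Taylor-expands $(\phi_3(z\pm e_i/K))^2-(\phi_3(z))^2$ to second order in $1/K$; the first-order term reproduces $2\phi_3\,L^K\phi_3$ and the second-order term produces the carré-du-champ contribution of size $O(\text{rate}/K^2)=O(1/K)$. The dominant $\gamma K$ pieces of the birth and death rates should, after using the identities noted before Lemma \ref{cgtvar} (namely that $Y^K$ measures deviation from Hardy–Weinberg), combine to give the negative drift $-c K (\phi_3)^2$: intuitively the fast neutral birth/death dynamics pushes the genotype frequencies toward the Hardy–Weinberg surface $\{y=0\}$ at rate $O(K)$, exactly as $\mathcal{Y}_t\to 0$ in the deterministic limit. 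The remainder $R^K$ will involve bounded-degree polynomials in the $Z^{i,K}$, hence (using $|Y^K|\le N^K$ and polynomial growth) will be controlled by moments of $N^K$.

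Having established the differential inequality, I would set $u_K(t)=\mathbb{E}((Y^K_t)^2)$ and, using that $(Y^K_t)^2 - \int_0^t L^K(\phi_3^2)(Z^K_s)\,ds$ is a martingale, obtain
\be
\frac{d}{dt}\,u_K(t) \;\leq\; -cK\,u_K(t) \;+\; \mathbb{E}\big(R^K(Z^K_t)\big).
\ee
By Proposition \ref{propgpositif}, which holds under \eqref{hyp1}–\eqref{hyp2} and provides uniform-in-$(K,t)$ bounds on $\mathbb{E}((N^K_t)^k)$ for the relevant $k$, the forcing term satisfies $\sup_{K,t}\mathbb{E}(R^K(Z^K_t))\le C'$ for some constant $C'$. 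Gronwall's lemma then yields
\be
u_K(t) \;\leq\; u_K(0)\,e^{-cKt} + \frac{C'}{cK}\big(1-e^{-cKt}\big) \;\leq\; e^{-cKt}\,\mathbb{E}((Y^K_0)^2) + \frac{C'}{cK},
\ee
and since $\mathbb{E}((Y^K_0)^2)\le \mathbb{E}((N^K_0)^2)$ is bounded, both terms vanish as $K\to\infty$ for each fixed $t>0$; moreover the bound is uniform for $u\in[t,t+s]$ because $e^{-cKu}\le e^{-cKt}$ there, giving $\sup_{t\le u\le t+s}u_K(u)\to 0$.

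The main obstacle will be the first step: verifying that the $O(K)$ part of the generator genuinely produces a strictly negative coefficient $-cK(\phi_3)^2$ rather than merely $O(K)$ terms of indefinite sign. This requires carefully tracking how the dominant $\gamma K$ birth and death contributions act on $\phi_3$, ideally by re-expressing everything in the $(N,X,Y)$ coordinates via the inverse map \eqref{inverse} and checking that the leading-order drift of $Y^K$ equals $-2\gamma K\, Y^K$ plus lower order, mirroring the deterministic relaxation; the algebra is delicate because several $O(K)$ pieces must cancel in the frequency variables $X^K$ and survive only in the $Y^K$ direction.
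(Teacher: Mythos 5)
Your proposal follows essentially the same route as the paper: the paper computes $L^K(\phi_3)^2(z)\leq -2\gamma K(\phi_3)^2(z)+C_3\bigl[(\phi_1(z))^2(\phi_1(z)+K\mathbf{1}_{\{\phi_1(z)\geq C_2K\}})+1\bigr]$, controls the remainder uniformly in $K$ and $t$ via Proposition \ref{propgpositif} (the indicator term coming from the truncated death rates is where the third-moment hypothesis \eqref{hyp2} is used), and concludes by the Kolmogorov forward equation and integration, exactly as you outline with Gronwall. The coefficient you conjecture for the leading drift, $-2\gamma K$, is precisely what the explicit Taylor expansion of $(\phi_3(z\pm e_i/K))^2$ yields in the paper.
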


\begin{proof} Let us fix $z=(z_1,z_2,z_3)\in(\mathbb{R}_+)^3$ and set $(n,x,y)=\phi(z)$ where $\phi$ is defined in Lemma \ref{cgtvar}. The extended generator $L^K$ of the jump process $Z^K$ applied to a measurable real-valued function $f$ (see Equation \eqref{generateur}) is decomposed as follows in $z$: \ban\label{generateurdecompose} L^Kf(z)&=\gamma K^2y\left[f\left(z-\frac{e_1}{K}\right)-2f\left(z-\frac{e_2}{K}\right)+f\left(z-\frac{e_3}{K}\right)\right]\\&+\gamma K^2n\,(x)^2\left[f\left(z+\frac{e_1}{K}\right)+f\left(z-\frac{e_1}{K}\right)-2f\left(z\right)\right]\\&+\gamma K^22nx(1-x)\left[f\left(z+\frac{e_2}{K}\right)+f\left(z-\frac{e_2}{K}\right)-2f\left(z\right)\right]\\&+\gamma K^2n\left(1-x\right)^2\left[f\left(z+\frac{e_3}{K}\right)+f\left(z-\frac{e_3}{K}\right)-2f(z)\right]\\&+\beta_1Kn\,(x)^2\left[f\left(z+\frac{e_1}{K}\right)-f(z)\right]+\beta_2K2nx(1-x)\left[f\left(z+\frac{e_2}{K}\right)-f(z)\right]\\&+\beta_3Kn(1-x)^2\left[f\left(z+\frac{e_3}{K}\right)-f(z)\right]\\&
+K\underset{i\in\{1,2,3\}}{\sum}\left(\delta_i+\underset{j\in\{1,2,3\}}{\sum}\alpha_{ji}z_j\right)z_i\left[f\left(z-\frac{e_i}{K}\right)-f(z)\right]\\&+K\underset{i\in\{1,2,3\}}{\sum}\left(\gamma K+\delta_i+\underset{j\in\{1,2,3\}}{\sum}\alpha_{ji}z_j\right)^-z_i\left[f\left(z-\frac{e_i}{K}\right)-f(z)\right]\ean where $(x)^-=\max(-x,0)$. Now if $f=\left(\phi_3\right)^2$, then there exist functions $g_1^K$, $g_2^K$, and $g_3^K$ and a constant $C_1$ such that for all $z\in(\mathbb{R}_+)^3$, \ba f\left(z+\frac{e_1}{K}\right)-f(z)&=-\frac{2f(z)}{Kn}+\frac{2z_3y}{Kn}+g_1^K(z)\\f\left(z+\frac{e_2}{K}\right)-f(z)&=-\frac{2f(z)}{Kn}-\frac{z_2y}{Kn}+g_2^K(z)\\f\left(z+\frac{e_3}{K}\right)-f(z)&=-\frac{2f(z)}{Kn}+\frac{2z_1y}{Kn}+g_3^K(z)\ea with $\vert g_i^K(z)\vert\leq \frac{C_1}{K^2}$ for all $i\in\{1,2,3\}$. Finally, note that since $\gamma>0$, there exists a positive constant $C_2$ such that for all $i\in\{1,2,3\}$ and all $z\in(\mathbb{R}_+)^3$, \ba\mathbf{1}_{\left\{\left(\gamma K+\delta_i+\underset{j\in\{1,2,3\}}{\sum}\alpha_{ji}z_j\right)^-\neq 0\right\}}&=\mathbf{1}_{\left\{\underset{j\in\{1,2,3\}}{\sum}\alpha_{ji}z_j\leq -\gamma K-\delta_i\right\}}\\&\leq\mathbf{1}_{\left\{\exists j\in\{1,2,3\}\,:\,\alpha_{ji}<0\,,\, \alpha_{ji}z_j\leq -\frac{\gamma}{3}K-\frac{\delta_i}{3}\right\}}\leq\mathbf{1}_{\left\{\phi_1(z)\geq C_2K\right\}}.\ea Therefore, there exists a positive constant $C_3$ such that \ba L^K\left(\phi_3\right)^2(z)
\leq-2\gamma K\left(\phi_3\right)^2(z)+C_3\left[(\phi_1(z))^2(\phi_1(z)+K\mathbf{1}_{\{\phi_1(z)\geq C_2K\}})+1\right].\ea Now from Proposition \ref{propgpositif} and Markov inequality, under \eqref{hyp1} and \eqref{hyp2}, there exists a constant $C$ such that $\underset{K}{\sup}\;\underset{t\geq0}{\sup}\;\mathbb{E}\left(C_3\left[(N^K_t)^2(N^K_t+K\mathbf{1}_{\{N^K_t\geq C_2K\}})+1\right]\right)\leq C.$ Therefore from the Kolmogorov forward equation, since $0\leq (Y^K_t)^2\leq(N^K_t)^2$ for all $t$ and from Proposition \ref{propgpositif}, \ba \frac{d\mathbb{E}\left((Y^K_t)^2\right)}{dt}&\leq -2\gamma K\mathbb{E}\left((Y^K_t)^2\right)+C.\ea
This gives for all $t\geq0$, $\frac{d}{dt}\left(e^{2\gamma Kt}\mathbb{E}\left((Y^K_t)^2\right)\right)\leq Ce^{2\gamma Kt}.$ Then by integration, 
\ba\mathbb{E}\left((Y^K_t)^2\right)&\leq \mathbb{E}\left(\left(Y^K_0\right)^2\right)e^{-2\gamma Kt}+\frac{C}{2\gamma K}-\frac{C}{2\gamma K}e^{-2\gamma Kt}\\&\leq \mathbb{E}\left((N^K_0)^2\right)e^{-2\gamma Kt}+\frac{C}{2\gamma K}-\frac{C}{2\gamma K}e^{-2\gamma Kt},\quad\text{which gives the result.}\ea \end{proof}

\bigskip In particular, under \eqref{hyp1} and \eqref{hyp2} and for all $t>0$, $Y^K_t$ converges in $L^2$ to $0$. We say that $Y^K$ is a fast variable compared to the vector $(N^K,X^K)$ whose behavior is now studied. Let us introduce the following notation for all $z=(z_1,z_2,z_3)\in(\mathbb{R}_+)^3$: $$\psi_1(z)=2z_1+z_2 \quad\text{and}\quad \psi_2(z)=2z_3+z_2.$$ Note that $\psi_1(Z^K_t)=2N^K_tX^K_t$ (resp. $\psi_2(Z^K_t)=2N^K_t(1-X^K_t)$) is the rescaled number of allele $A$ (resp. $a$) in the rescaled population $Z^K$ at time $t$. For any $K\in\mathbb{N}^*$, $(\psi_1(Z^K),\psi_2(Z^K))$ is a pure jump Markov process with trajectories in $\mathbb{D}(\mathbb{R}_+,(\mathbb{Z}_+)^2/K)$ and for all $i\in\{1,2\}$, the process $\psi_i(Z^K)$ admits the following semi-martingale decomposition: for all $t\geq0$,
$$\psi_i(Z^K_t)=\psi_i(Z^K_0)+M^{i,K}_t+\int_0^tL^K\psi_i(Z^K_s)ds$$ where $M^K=(M^{1,K},M^{2,K})$ is, under \eqref{hyp1} and \eqref{hyp2}, a square integrable $\mathbb{R}^2$-valued càd-làg martingale (from Proposition \ref{propgpositif}) and is such that for all $i,j\in\{1,2\}$, the predictable quadratic variation is given for all $t\geq0$ by:
$$\langle M^{i,K},M^{j,K}\rangle_t=\int_0^tL^K\psi_i\psi_j(Z^K_s)
-\psi_i(Z^K_s)L^K\psi_j(Z^K_s)-\psi_j(Z^K_s)L^K\psi_i(Z^K_s)ds.$$
Using this decomposition we prove the

\begin{thm}\label{TheoremConvergenceNX} Under \eqref{hyp1} and \eqref{hyp2}, if the sequence $\{(\psi_1(Z^K_0),\psi_2(Z^K_0))\}_{K\in\mathbb{N^*}}$ of random variables converges in law toward a random variable $(N^A_0,N^a_0)$ when $K$ goes to infinity, then for all $T>0$, the sequence of stochastic processes $(\psi_1(Z^K),\psi_2(Z^K))$ converges in law in $\mathbb{D}([0,T],(\mathbb{R}_+)^2)$ when $K$ goes to infinity, toward the diffusion process $(N^A,N^a)$ starting from $(N^A_0,N^a_0)$ and satisfying the following diffusion equation, where $B=(B^1,B^2)$ is a $2$-dimensional Brownian motion: \ban\label{diffusionA1A2}
dN^A_t&=\frac{N^A_t}{N^A_t+N^a_t}\left[\left[\beta_1-\delta_1-\frac{\alpha_{11}(N^A_t)^2+\alpha_{21}2N^A_tN^a_t+\alpha_{31}(N^a_t)^2}{2(N^A_t+N^a_t)}\right]N^A_t\right.\\&\phantom{\frac{N^A_t}{N^A_t+N^a_t}[[}+\left.\left[\beta_2-\delta_2-\frac{\alpha_{12}(N^A_t)^2+\alpha_{22}2N^A_tN^a_t+\alpha_{32}(N^a_t)^2}{2(N^A_t+N^a_t)}\right]N^a_t\right]dt\\&+\sqrt{\frac{4\gamma}{N^A_t+N^a_t}}N^A_tdB^1_t+\sqrt{2\gamma\frac{N^A_tN^a_t}{N^A_t+N^a_t}}dB^2_t\\
dN^a_t&=\frac{N^a_t}{N^A_t+N^a_t}\left[\left[\beta_3-\delta_3-\frac{\alpha_{33}(N^a_t)^2+\alpha_{23}2N^A_tN^a_t+\alpha_{13}(N^A_t)^2}{2(N^A_t+N^a_t)}\right]N^a_t\right.\\&\phantom{\frac{N^a_t}{N^A_t+N^a_t}[[}+\left.\left[\beta_2-\delta_2-\frac{\alpha_{32}(N^a_t)^2+\alpha_{22}2N^A_tN^a_t+\alpha_{12}(N^A_t)^2}{2(N^A_t+N^a_t)}\right]N^A_t\right]dt\\&+\sqrt{\frac{4\gamma}{N^A_t+N^a_t}}N^a_tdB^1_t-\sqrt{2\gamma\frac{N^A_tN^a_t}{N^A_t+N^a_t}}dB^2_t
\ean\end{thm}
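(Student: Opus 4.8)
The plan is to prove this as a standard diffusion-approximation result, following the martingale-problem/tightness approach of Ethier--Kurtz, adapted to the slow-fast structure already exposed in the excerpt. I would work with the allele-count processes $U^K:=(\psi_1(Z^K),\psi_2(Z^K))$ and their semi-martingale decomposition $\psi_i(Z^K_t)=\psi_i(Z^K_0)+M^{i,K}_t+\int_0^t L^K\psi_i(Z^K_s)\,ds$. The proof has three movements: (1) establish tightness of the laws of $U^K$ in $\mathbb{D}([0,T],(\mathbb{R}_+)^2)$; (2) identify the limit by computing the asymptotics of the drift $L^K\psi_i$ and of the bracket $\langle M^{i,K},M^{j,K}\rangle$, using Proposition \ref{propY} to eliminate the fast variable $Y^K$; (3) invoke uniqueness of the limiting martingale problem to conclude convergence in law to the SDE \eqref{diffusionA1A2}.

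For the first movement, I would apply Aldous's criterion (or the Rebolledo/Joffe--M\'etivier criterion). The key inputs are the uniform moment bounds: Proposition \ref{propgpositif} gives $\sup_K\sup_{t\geq0}\mathbb{E}((N^K_t)^k)\leq C$ and $\sup_K\mathbb{E}(\sup_{t\leq T}(N^K_t)^k)\leq C_T$ under \eqref{hyp1}--\eqref{hyp2}, and since $0\leq\psi_i(Z^K)\leq 2N^K$ these transfer directly to $U^K$. Tightness then follows by controlling the modulus of continuity of the finite-variation part $\int_0^t L^K\psi_i\,ds$ and the increments of the bracket of $M^{i,K}$ over small time intervals; the moment bounds make both pieces uniformly small, and the martingale part is handled through its predictable quadratic variation. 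One must also check the compact-containment condition, again a consequence of the uniform moment bounds via Markov's inequality.

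The second movement is where the slow-fast cancellation must be made precise, and I expect it to be the main obstacle. Using the generator decomposition \eqref{generateurdecompose}, a direct computation of $L^K\psi_i(z)$ and of $L^K(\psi_i\psi_j)(z)-\psi_i L^K\psi_j(z)-\psi_j L^K\psi_i(z)$ must be carried out. The delicate point is that the leading $O(K^2)$ fast terms (those with coefficient $\gamma K^2$) are designed to cancel exactly on the allele counts $\psi_i$, since $\psi_1$ and $\psi_2$ are functions that are, to leading order, conserved by the fast Hardy--Weinberg dynamics; the surviving contributions to the drift are the $O(1)$ selection/competition terms and an $O(1)$ diffusive correction coming from the second-order part of the $\gamma K^2$ terms. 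Crucially, the drift and diffusion coefficients will contain the variable $Y^K$ (through $z_1,z_2,z_3$ expressed via $\phi$), and one must show these $Y^K$-dependent remainder terms vanish in the limit: this is exactly where Proposition \ref{propY}, giving $\sup_{t\leq u\leq t+s}\mathbb{E}((Y^K_u)^2)\to 0$, is invoked, after integrating against time and using Cauchy--Schwarz together with the moment bounds on $N^K$. The goal is to show that $\int_0^t L^K\psi_i(Z^K_s)\,ds$ converges to $\int_0^t b_i(U_s)\,ds$ and $\langle M^{i,K},M^{j,K}\rangle_t$ converges to $\int_0^t a_{ij}(U_s)\,ds$, where $b$ and $a$ are the drift and diffusion matrix read off from \eqref{diffusionA1A2}; one verifies $a=\sigma\sigma^{\mathrm{T}}$ with $\sigma$ the stated Brownian coefficients, the off-diagonal structure reflecting the shared $B^1$ and opposite-sign $B^2$.

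For the third movement, having shown tightness and having characterized every limit point as a solution of the martingale problem associated with the generator of \eqref{diffusionA1A2}, I would establish uniqueness of that martingale problem. The SDE has continuous coefficients that are locally Lipschitz away from the boundary $\{N^A+N^a=0\}$, so weak uniqueness holds up to the hitting time of the origin; since the origin is absorbing and the processes are nonnegative, this suffices to pin down the law on $[0,T]$. Combined with the assumed convergence of initial data $(\psi_1(Z^K_0),\psi_2(Z^K_0))\Rightarrow(N^A_0,N^a_0)$, tightness plus a unique limit point yields convergence in law of the full sequence, completing the proof.
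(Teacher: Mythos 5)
Your architecture coincides with the paper's: tightness of $(\psi_1(Z^K),\psi_2(Z^K))$ via the Rebolledo--Aldous criteria fed by the moment bounds of Proposition \ref{propgpositif}, identification of every limit point as a solution of the martingale problem by exploiting the exact cancellation of the $O(K^2)$ terms on the linear functionals $\psi_i$ and by killing the $Y^K$-dependent remainders with Proposition \ref{propY}, and finally uniqueness of the limiting martingale problem. (The paper phrases the identification step as a bound on $\vert L^Kf(\psi_1,\psi_2)-Lf(\psi_1,\psi_2)\vert$ for $f\in\mathcal{C}^3_c$ rather than as separate convergence of the drift and of the bracket, but this is the same computation.)

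There is, however, a genuine gap in your third movement. You justify weak uniqueness by claiming the coefficients of \eqref{diffusionA1A2} are locally Lipschitz away from $\{N^A+N^a=0\}$, so that uniqueness holds up to the hitting time of the origin. This is false: the diffusion coefficient $\sqrt{2\gamma N^A_tN^a_t/(N^A_t+N^a_t)}$ has a square-root singularity along the whole of each open axis $\{N^A=0,\,N^a>0\}$ and $\{N^a=0,\,N^A>0\}$, which lie at positive distance from the origin, and the diffusion matrix $a=\sigma\sigma^{\mathrm{T}}$ satisfies $\det a=8\gamma^2N^AN^a$, hence degenerates exactly there. These axes are not negligible: by Theorem \ref{theoremtemps}$(i)$ the process reaches $\{X=0\}\cup\{X=1\}$ (i.e.\ an axis) strictly before the origin almost surely, so the law on $[0,T]$ is not determined by the behavior up to $T_{\mathbf{0}}$ alone, and a Gronwall/local-Lipschitz argument cannot cover the passage through a neighborhood of the axes. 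A multidimensional Yamada--Watanabe argument does not apply off the shelf either, since $\sigma$ is not diagonal in the required sense. The paper instead localizes the martingale problem on $\{\epsilon<N^A+N^a<1/\epsilon\}$ (a region that still contains pieces of the axes), invokes the Stroock--Varadhan uniqueness theory for the localized problem, and patches with the localization theorem for martingale problems (Theorem $6.2$ of \cite{EthierKurtz}). You need some argument of this type --- or any other proof of well-posedness of the degenerate martingale problem on the closed quadrant --- to close this step; the rest of your proposal is sound.
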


Note that the diffusion coefficients of the diffusion process $((N^A_t,N^a_t),t\geq0)$ do not explode when $N^A_t+N^a_t$ goes to $0$ since $\frac{N^A_t}{\sqrt{N^A_t+N^a_t}}\leq\sqrt{N^A_t+N^a_t}$, $\frac{N^a_t}{\sqrt{N^A_t+N^a_t}}\leq\sqrt{N^A_t+N^a_t}$ and $\frac{N^A_tN^a_t}{N^A_t+N^a_t}\leq N^A_t+N^a_t$.
From this theorem we deduce the convergence of the sequence of processes $$(N^K,X^K)=\left(\frac{\psi_1(Z^K)+\psi_2(Z^K)}{2},\frac{\psi_1(Z^K)}{\psi_1(Z^K)+\psi_2(Z^K)}\right)$$ stopped when $N^K\leq\epsilon$ for any $\epsilon>0$:

\begin{cor}\label{corNX}
For any $\epsilon>0$ and $T>0$, let us define $T_{\epsilon}^K=\inf\{t\in[0,T]:N^K_t\leq\epsilon\}$. If the sequence of random variables $(N^K_0,X^K_0)\in[\epsilon,+\infty[\times[0,1]$ converges in law toward a random variable $(N_0,X_0)\in]\epsilon,+\infty[\times[0,1]$ when $K$ goes to infinity, then the sequence of stopped stochastic processes $\{(N^K,X^K)_{.\wedge T^K_{\epsilon}}\}_{K\geq1}$ converges in law in $\mathbb{D}([0,T],[\epsilon,\infty[\times[0,1])$ when $K$ goes to infinity, toward the stopped diffusion process $(N,X)_{.\wedge T_{\epsilon}}$ ($T_{\epsilon}=\inf\{t\in[0,T]:N_t=\epsilon\}$), starting from $(N_0,X_0)$ and satisfying the following diffusion equation: 
\ban\label{diffusionNXnoneutre}
dN_t\!&=\sqrt{2\gamma N_t}dB^1_t\\&+N_t\left[X_t^2\left(\beta_1-\delta_1-\left(\alpha_{11}N_tX_t^2
+\alpha_{21}2N_tX_t(1-X_t)+\alpha_{31}N_t(1-X_t)^2\right)\right)\right.\\&\phantom{+}+2X_t(1-X_t)\left(\beta_2-\delta_2-\!\left(\alpha_{12}N_tX_t^2+\alpha_{22}2N_tX_t(1-X_t)+\alpha_{32}N_t(1-X_t)^2\right)\right)\\&\phantom{+}\left.+(1-X_t)^2\left(\beta_3-\delta_3-\!\left(\alpha_{13}N_tX_t^2+\alpha_{23}2N_tX_t(1-X_t)+\alpha_{33}N_t(1-X_t)^2\right)\right)\right]dt\\
dX_t&=\sqrt{\frac{\gamma X_t(1-X_t)}{N_t}}dB^2_t\\&+(1-X_t)X_t^2[(\beta_1-\delta_1)-(\beta_2-\delta_2)\\&\phantom{+(1}-\!N_t((\alpha_{11}-\alpha_{12})X_t^2
+(\alpha_{21}-\alpha_{22})2X_t(1-X_t)+(\alpha_{31}-\alpha_{32})(1-X_t)^2)]dt\\&+X_t(1-X_t)^2[(\beta_2-\delta_2)-(\beta_3-\delta_3)
\\&\phantom{+(1}-\!N_t((\alpha_{12}-\alpha_{13})X_t^2+(\alpha_{22}-\alpha_{23})2X_t(1-X_t)+(\alpha_{32}-\alpha_{33})(1-X_t)^2)]dt.
\ean 
\end{cor}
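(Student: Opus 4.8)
The plan is to deduce the corollary directly from Theorem \ref{TheoremConvergenceNX} by applying an explicit smooth change of variables together with the continuous mapping theorem; the only delicate points are the singularity of this change of variables at the origin and the continuity of the stopping operation in the Skorohod topology.

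First I would observe that $(N^K,X^K)$ is the exact image of $(\psi_1(Z^K),\psi_2(Z^K))$ under the map
\[
G:(a,b)\longmapsto\left(\frac{a+b}{2},\,\frac{a}{a+b}\right),
\]
which is defined and $C^{\infty}$ on the open set $\{(a,b)\in(\mathbb{R}_+)^2 : a+b>0\}$ and Lipschitz on every region $\{a+b\ge 2\epsilon\}$. Its inverse is $(n,x)\mapsto(2nx,2n(1-x))$, which is consistent with the identities $N^A=2NX$ and $N^a=2N(1-X)$.

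Next I would identify the limiting equation by setting $(N,X)=G(N^A,N^a)$ for the diffusion $(N^A,N^a)$ of Theorem \ref{TheoremConvergenceNX} and applying It\^o's formula to each component of $G$. For the first component one uses $dN=\tfrac12(dN^A+dN^a)$: the two $dB^2$-terms of \eqref{diffusionA1A2} cancel, while the $dB^1$-terms combine into $\sqrt{2\gamma N_t}\,dB^1_t$, and the drift reorganizes into the three-genotype expression displayed in \eqref{diffusionNXnoneutre}. For the second component, writing $h(a,b)=a/(a+b)$ one has $\partial_a h=b/(a+b)^2$ and $\partial_b h=-a/(a+b)^2$, so that the coefficient of $dB^1$ vanishes identically (the population-size noise does not drive $X$) while the coefficient of $dB^2$ reduces, after substituting $N^A=2NX,\ N^a=2N(1-X)$, to $\sqrt{\gamma X_t(1-X_t)/N_t}$. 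The second-order It\^o corrections, computed from the quadratic variations $d\langle N^A\rangle$, $d\langle N^a\rangle$, $d\langle N^A,N^a\rangle$ read off from \eqref{diffusionA1A2}, contribute only to the drift and yield the stated $dt$-terms. This is a routine but lengthy verification.

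To transfer the convergence, let $\tau_\epsilon$ be the hitting time of $\{(a+b)/2\le\epsilon\}$ and let $\Phi_\epsilon$ be the functional that stops a trajectory at $\tau_\epsilon$ and then applies $G$ pointwise; since stopping commutes with $G$, one has $\Phi_\epsilon(\psi_1(Z^K),\psi_2(Z^K))=(N^K,X^K)_{\cdot\wedge T^K_\epsilon}$ and likewise $\Phi_\epsilon(N^A,N^a)=(N,X)_{\cdot\wedge T_\epsilon}$. The stopped processes $(\psi_1(Z^K),\psi_2(Z^K))_{\cdot\wedge T^K_\epsilon}$ stay in $\{a+b\ge 2\epsilon\}$ up to a jump overshoot of order $1/K$ that vanishes as $K\to\infty$, so along them $G$ is Lipschitz and loses no continuity. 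By Theorem \ref{TheoremConvergenceNX} it then suffices, via the continuous mapping theorem, to show that $\Phi_\epsilon$ is continuous at $\mathbb{P}$-almost every path of the limit diffusion $(N^A,N^a)$.

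The main obstacle is precisely this last continuity statement. The discontinuities of $\Phi_\epsilon$ occur at paths that either reach the origin before $\tau_\epsilon$ (excluded, since the limit keeps $N=(N^A+N^a)/2>\epsilon$ strictly before $\tau_\epsilon$) or touch the level $\epsilon$ tangentially without crossing it. The latter set is $\mathbb{P}$-negligible because the $N$-component of the limit carries the nondegenerate Brownian term $\sqrt{2\gamma N_t}\,dB^1_t$, with $\sqrt{2\gamma\epsilon}>0$ at $N=\epsilon$; hence the diffusion crosses $\epsilon$ transversally and $\tau_\epsilon$ is almost surely a continuity point of the hitting-time functional. Applying the continuous mapping theorem to $\Phi_\epsilon$ then yields the convergence in law of $\{(N^K,X^K)_{\cdot\wedge T^K_\epsilon}\}_{K\ge1}$ toward $(N,X)_{\cdot\wedge T_\epsilon}$ in $\mathbb{D}([0,T],[\epsilon,\infty)\times[0,1])$, as claimed.
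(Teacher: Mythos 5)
Your proposal is correct and follows essentially the same route as the paper: transfer the convergence of Theorem \ref{TheoremConvergenceNX} through the stopping functional and the map $(a,b)\mapsto\bigl(\tfrac{a+b}{2},\tfrac{a}{a+b}\bigr)$, which is Lipschitz on $\{a+b\geq 2\epsilon\}$, using the continuous mapping theorem and the almost sure continuity of the stopping operation at the limit paths. The paper handles that last point by proving a deterministic Skorohod lemma (Lemma \ref{lemcoro}) and invoking Pinsky's Theorem $3.3$ for the a.s.\ continuity of $\epsilon'\mapsto\zeta_{\epsilon'}(N^A,N^a)$, whereas you assert the non-tangential crossing of the level $\epsilon$ directly from the nondegeneracy of the noise --- the same underlying fact, just stated rather than referenced.
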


The population size and the proportion of allele $A$ are therefore directed by two independent Brownian motions. The diffusion equation \eqref{diffusionNXnoneutre} can be simplified in the neutral case:

\begin{cor}\label{corWF}
In the neutral case where $\beta_i=\beta$, $\delta_i=\delta$ and $\alpha_{ij}=\alpha$ for all $i$, $j$, the limiting diffusion $(N,X)$ introduced in Equation \eqref{diffusionNXnoneutre} satisfies:
\ban\label{diffusionNXWF}
dN_t&=\sqrt{2\gamma N_t}dB^1_t+N_t(\beta-\delta-\alpha N_t)dt\\
dX_t&=\sqrt{\frac{\gamma X_t(1-X_t)}{N_t}}dB^2_t.
\ean
$X$ is then a bounded martingale and this diffusion can be seen as a generalized Wright-Fisher diffusion (see for instance \cite{EthierKurtz} p. $411$) with a population size evolving stochastically with time.
\end{cor}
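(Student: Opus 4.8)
The plan is to specialize the general diffusion \eqref{diffusionNXnoneutre} obtained in Corollary \ref{corNX} by substituting the neutral parameters $\beta_i=\beta$, $\delta_i=\delta$ and $\alpha_{ij}=\alpha$, and then to simplify the drift and diffusion coefficients by elementary algebra. No new probabilistic input is needed, since existence and uniqueness of $(N,X)$ are already guaranteed by the corollary. I would first observe that the diffusion coefficients in \eqref{diffusionNXnoneutre}, namely $\sqrt{2\gamma N_t}$ for $N$ and $\sqrt{\gamma X_t(1-X_t)/N_t}$ for $X$, carry no dependence on the demographic parameters $\beta_i,\delta_i,\alpha_{ij}$; hence only the drift terms have to be examined.

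First I would treat the drift of $N$. After substitution, the competition contribution inside each of the three brackets multiplying $X_t^2$, $2X_t(1-X_t)$ and $(1-X_t)^2$ becomes $\alpha N_t\big(X_t^2+2X_t(1-X_t)+(1-X_t)^2\big)$, and the identity $X_t^2+2X_t(1-X_t)+(1-X_t)^2=(X_t+(1-X_t))^2=1$ collapses each bracket to the common factor $\beta-\delta-\alpha N_t$. Factoring this factor out of the whole drift and applying the same identity once more reduces the drift of $N$ to $N_t(\beta-\delta-\alpha N_t)$, which yields the first line of \eqref{diffusionNXWF}, a logistic Feller diffusion.

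Next I would treat the drift of $X$. In \eqref{diffusionNXnoneutre} every coefficient of the $X$-drift appears as a difference of the form $(\beta_i-\delta_i)-(\beta_j-\delta_j)$ or $\alpha_{ij}-\alpha_{ik}$; under the neutral assumption all such differences vanish identically, so the entire drift of $X$ disappears and only the martingale part $\sqrt{\gamma X_t(1-X_t)/N_t}\,dB^2_t$ survives, giving the second line of \eqref{diffusionNXWF}. Since $X$ takes values in $[0,1]$, inherited from the state space in Corollary \ref{corNX}, and is driftless, it is a bounded continuous local martingale, hence a true martingale; the diffusion coefficient is well defined because $N_t$ stays strictly positive on the relevant time interval. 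The form $\sqrt{\gamma X(1-X)/N}$ of this coefficient is precisely that of a Wright--Fisher diffusion in which the usual constant population size is replaced by the stochastically varying $N_t$, which identifies $(N,X)$ as the announced generalized Wright--Fisher diffusion.

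I do not expect any genuine obstacle here: the statement is a direct specialization of \eqref{diffusionNXnoneutre}, and the whole argument is bookkeeping organized around the single algebraic identity $X^2+2X(1-X)+(1-X)^2=1$. The only point meriting a word of care is the passage from bounded local martingale to true martingale for $X$, which is immediate from its boundedness in $[0,1]$.
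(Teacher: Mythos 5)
Your proposal is correct and coincides with what the paper does: the corollary is stated without a separate proof precisely because it is the direct substitution you carry out, with the identity $X^2+2X(1-X)+(1-X)^2=1$ collapsing the drift of $N$ and the vanishing of all parameter differences killing the drift of $X$. The remark that the driftless, $[0,1]$-valued $X$ is a bounded (hence true) martingale matches the paper's assertion, so nothing is missing.
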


We denote by $\mathcal{C}^k_b(E,\mathbb{R})$ the set of functions from $E$ to $\mathbb{R}$ possessing bounded continuous derivatives of order up to $k$ (resp. with compact support) and $\mathcal{C}^k_c(E,\mathbb{R})$ the set of functions of $\mathcal{C}^k_b(E,\mathbb{R})$ with compact support.

\begin{proof}[Proof of Theorem \ref{TheoremConvergenceNX}] Using the Rebolledo and Aldous criteria (\cite{JoffeMetivier1986}), we prove the tightness of the sequence of processes $(\psi_1(Z^K),\psi_2(Z^K))$ and its convergence toward the unique continuous solution of a martingale problem. The proof is divided in several steps.

STEP 1. Let us denote by $L$ the generator of the diffusion process defined in Equation \eqref{diffusionA1A2}. We first prove the uniqueness of a solution $((N^A_t,N^a_t),t\in[0,T])$ to the martingale problem: for any function $f\in\mathcal{C}^2_b((\mathbb{R}_+)^2,\mathbb{R})$,
\ben \label{Mart1}M^f_t=f(N^A_t,N^a_t)-f(N^A_0,N^a_0)-\int_0^tLf(N^A_s,N^a_s)ds\een is a continuous martingale.
From \cite{StroockVaradhan}, for any $\epsilon>0$, there exists a unique (in law) solution $((N^{A,\epsilon}_t,N^{a,\epsilon}_t),t\in[0,T])$ such that for all $f\in\mathcal{C}^2_b(\mathbb{R}^2)$ the process $(M^{f,\epsilon}_t,t\in[0,T])$ such that for all $t\geq0$,
\be M^{f,\epsilon}_t=f(N^{A,\epsilon}_t,N^{a,\epsilon}_t)-f(N^{A,\epsilon}_0,N^{a,\epsilon}_0)-\int_0^tLf(N^{A,\epsilon}_s,N^{a,\epsilon}_s)\mathbf{1}_{\{\epsilon<N^{A,\epsilon}_s+N^{a,\epsilon}_s<1/\epsilon\}}ds\ee is a continuous martingale. The uniqueness of a solution of \eqref{Mart1} therefore follows from Theorem $6.2$ of \cite{EthierKurtz} about localization of martingale problems. 

STEP 2. As in the proof of Proposition \ref{propY}, we obtain easily that there exist two positive constants $C_1$ and $C_2$ such that for all $z\in(\mathbb{R}_+)^3$, the generator $L^K$ of $Z^K$, decomposed in Equation \eqref{generateurdecompose}, satisfies: $$\vert L^K\psi_1(z)\vert+\vert L^K\psi_2(z)\vert\leq C_2 \left[\phi_1(z)^2+1+K\phi_1(z)\mathbf{1}_{\phi_1(z)\geq C_1K}\right],$$ and similarly $$\vert L^K\psi_1^2(z)-2\psi_1(z)L^K\psi_1(z)+L^K\psi_2^2(z)
-2\psi_2(z)L^K\psi_2(z)\vert\leq C_2(\phi_1(z)^2+1).$$ Therefore from Proposition \ref{propgpositif}, under \eqref{hyp1} and \eqref{hyp2}, for all sequence of stopping times $\tau_K\leq T$ and for all $\epsilon>0$: \ban\label{AldousA2} \underset{K\geq K_0}{\sup}\,\underset{\sigma\leq\delta}{\sup}\,\mathbb{P}&\left(\left|\int_{\tau_K}^{\tau_K+\sigma}L^K\psi_1(N^K_s,X^K_s)ds\right|
+\left|\int_{\tau_K}^{\tau_K+\sigma}L^K\psi_2(N^K_s,X^K_s)ds\right|>\eta\right)\\&\leq\underset{K\geq K_0}{\sup}\;\mathbb{P}\left(\delta\;\underset{0\leq s\leq T+\delta}{\sup}\;C_2((N^K_s)^2+1)>\eta\right)\\&+\underset{K\geq K_0}{\sup}\;\mathbb{P}\left(\underset{0\leq s\leq T+\delta}{\sup}\; N^K_s\geq C_1 K\right)\\&\leq\epsilon\quad\text{if $K_0$ is large enough and $\delta$ is small enough.}\ean Similarly, \ban\label{AldousM2}  \underset{K\geq K_0}{\sup}\;\underset{\sigma\leq\delta}{\sup}\;\mathbb{P}&\left(\left|\int_{\tau_K}^{\tau_K+\sigma}(L^K\psi_1^2(Z^K_s)-2\psi_1(Z^K_s)L^K\psi_1(Z^K_s)
\right.\right.\\&\left.\left.\phantom{\int aaaaaaaaaaaaaaaa}+L^K\psi_2^2(Z^K_s)-2\psi_2(Z^K_s)L^K\psi_2(Z^K_s))ds\right|>\eta\right)\\&\leq\epsilon\quad\text{if $K_0$ is large enough and $\delta$ is small enough.}\ean  The sequence of processes $(\psi_1(Z^K),\psi_2(Z^K))$ is then tight from Rebolledo and Aldous criteria (Theorem $2.3.2$ of \cite{JoffeMetivier1986}). 

STEP 3. Now let us consider a subsequence of $(\psi_1(Z^K),\psi_2(Z^K))$ that converges in law in $\mathbb{D}([0,T],\mathbb{R}^2)$ toward a process $(N^A,N^a)$. Since for all $K>0$, $\underset{t\in[0,T]}{\sup}\;\Vert (N^A_t,N^a_t)-(N^A_{t^-},N^a_{t^-})\Vert\leq 2/K$ by construction, almost all trajectories of the limiting process $(N^A,N^a)$ belong to $C([0,T],\mathbb{R}^2)$.

STEP 4. Finally we prove that the sequence $\{(\psi_1(Z^K),\psi_2(Z^K))\}_{K\in\mathbb{N}^*}$ of stochastic processes converges toward the unique continuous solution of the martingale problem given by Equation \eqref{Mart1}. Indeed for every function $f\in\mathcal{C}^3_c(\mathbb{R}^2)$, from Equation \eqref{generateurdecompose} there exists a constant $C_4$ such that \ban\label{eqLKL}&\left|L^Kf(\psi_1(z),\psi_2(z))-Lf(\psi_1(z),\psi_2(z))\right|\\&\leq C_4\left[\frac{\phi_1(z)^2}{K}+\vert\phi_3(z)\vert(1+\phi_1(z))+\!\gamma K\phi_1(z)\mathbf{1}_{\phi_1(z)\geq C_2K}+\!(\phi_1(z)^2+1)\mathbf{1}_{\phi_1(z)\geq C_2K}\right]\ean Note here that the fast-scale property shown in Proposition \ref{propY}, combined to Proposition \ref{propgpositif}, will insure that $\underset{t\leq u\leq t+s}{\sup}\mathbb{E}(\vert\phi_3(Z^K_t)\vert\phi_1(Z^K_t))$ converges to $0$ when $K$ goes to infinity. Then for all $0\leq t_1<t_2<...<t_k\leq t<t+s$, for all bounded continuous measurable functions $h_1,...,h_k$ on $(\mathbb{R}_+)^2$ and every $f\in\mathcal{C}^3_c(\mathbb{R}^2)$: \ba\mathbb{E}&\left[\left(f(\psi_1(Z^K_{t+s}),\psi_2(Z^K_{t+s}))-f(\psi_1(Z^K_{t}),\psi_2(Z^K_{t}))-\int_t^{t+s}Lf(\psi_1(Z^K_u),\psi_2(Z^K_u))du\right)\right.\\&\quad\quad\quad\quad\quad\times\left.\prod_{i=1}^kh_i(\psi_1(Z^K_{t_i}),\psi_2(Z^K_{t_i}))\right]=\\\mathbb{E}&\left[\int_t^{t+s}\!\!\!\!\left(L^Kf(\psi_1(Z^K_u),\psi_2(Z^K_u))\!-\!Lf(\psi_1(Z^K_u),\psi_2(Z^K_u))\right)du\right.\\&\quad\quad\quad\quad\quad\left.\times\prod_{i=1}^kh_i(\psi_1(Z^K_{t_i}),\psi_2(Z^K_{t_i}))\right]\\&\leq \underset{i}{\sup}\|h_i\|_{\infty}\;\mathbb{E}\left[\int_t^{t+s}\left|L^Kf(\psi_1(Z^K_u),\psi_2(Z^K_u))-Lf(\psi_1(Z^K_u),\psi_2(Z^K_u))\right|du\right]\\&\leq\underset{i}{\sup}\|h_i\|_{\infty}\;s\underset{t\leq u\leq t+s}{\sup}\mathbb{E}\left[\left|L^Kf(\psi_1(Z^K_u),\psi_2(Z^K_u))-Lf(\psi_1(Z^K_u),\psi_2(Z^K_u))\right|\right]\\&\underset{K\rightarrow\infty}{\rightarrow}\!\!0,\ea  under \eqref{hyp1} and \eqref{hyp2}, from Equation \eqref{eqLKL} and Propositions \ref{propgpositif} and \ref{propY}. The extension of this result to any $f\in\mathcal{C}^2_b((\mathbb{R}_+)^2,\mathbb{R})$ is easy to obtain by approximating uniformly $f$ by a sequence of functions $f_n\in\mathcal{C}^3_c((\mathbb{R}_+)^2,\mathbb{R})$. Then from Theorem $8.10$ (p. $234$) of \cite{EthierKurtz}, $(\psi_1(Z^K),\psi_2(Z^K))$ converges in law in $\mathbb{D}([0,T],\mathbb{R}^2)$ toward the unique (in law) solution of the martingale problem given in Equation \eqref{Mart1}, which is equal to the diffusion process $(N^A,N^a)$ of Equation \eqref{diffusionA1A2}.
\end{proof}

\bigskip
The proof of Corollary \ref{corNX} relies on the following analytic lemma:

\begin{lem}\label{lemcoro}For any $x=(x^1_t,x^2_t)_{0\leq t\leq T}\in\mathbb{D}([0,T],(\mathbb{R}_+)^2)$ and any $\epsilon>0$, let us define $$\zeta_{\epsilon}(x)=\inf\{t\in[0,T]:x^1_t+x^2_t\leq2\epsilon\}.$$ Let $x=(x^1_t,x^2_t)_{0\leq t\leq T}\in\mathcal{C}([0,T],(\mathbb{R}_+)^2)$ such that $x^1_0+x^2_0>2\epsilon$ and $\epsilon'\mapsto\zeta_{\epsilon'}(x)$ is continuous in $\epsilon$. Consider a sequence of functions $(x_n)_{n\in\mathbb{Z}_+}$ such that for any $n\in\mathbb{Z}_+$, $x_n=(x^{1,n}_t,x^{2,n}_t)_{0\leq t\leq T}\in\mathbb{D}([0,T],(\mathbb{R}_+)^2)$ and $x_n$ converges to $x$ for the Skorohod topology. Then the sequence $((x^{1,n}_{t\wedge\zeta_{\epsilon}(x_n)},x^{2,n}_{t\wedge\zeta_{\epsilon}(x_n)}),t\in[0,T])$ converges to $((x^1_{t\wedge\zeta_{\epsilon}(x)},x^2_{t\wedge\zeta_{\epsilon}(x)}),t\in[0,T])$ when $n$ goes to infinity.
\end{lem}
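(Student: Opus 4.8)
The plan is to prove a deterministic statement about the Skorohod topology: that stopping at the (continuous) exit time from the region $\{x^1+x^2>2\epsilon\}$ is a continuous operation at trajectories $x$ that are themselves continuous and at which the map $\epsilon'\mapsto\zeta_{\epsilon'}(x)$ is continuous. Since the limit $x$ is continuous, Skorohod convergence $x_n\to x$ is equivalent to uniform convergence on $[0,T]$; this is the first thing I would record, because it lets me replace all the time-changes in the Skorohod distance by the sup-norm and reduces the problem to elementary estimates. The two quantities I must control are (a) the exit times $\zeta_\epsilon(x_n)\to\zeta_\epsilon(x)$ and (b) the stopped paths; once (a) holds, (b) follows because $x$ is uniformly continuous on the compact $[0,T]$.

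First I would establish the convergence of the stopping times. Write $S_t=x^1_t+x^2_t$ and $S^n_t=x^{1,n}_t+x^{2,n}_t$, so $\zeta_\epsilon(x)=\inf\{t:S_t\le 2\epsilon\}$ and similarly for $x_n$, with $S^n\to S$ uniformly. The continuity hypothesis on $\epsilon'\mapsto\zeta_{\epsilon'}(x)$ is exactly what rules out the pathology where $S$ touches the level $2\epsilon$ tangentially without crossing it (a situation in which a small uniform perturbation could either advance or delay the hitting time by a macroscopic amount). Concretely, I would prove $\limsup_n\zeta_\epsilon(x_n)\le\zeta_\epsilon(x)$ and $\liminf_n\zeta_\epsilon(x_n)\ge\zeta_\epsilon(x)$ separately. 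The upper bound is soft: for any $\eta>0$, continuity of $\epsilon'\mapsto\zeta_{\epsilon'}$ gives an $\epsilon'>\epsilon$ with $\zeta_{\epsilon'}(x)<\zeta_\epsilon(x)+\eta$, so $S$ has already dropped below $2\epsilon'$ by time $\zeta_\epsilon(x)+\eta$; uniform convergence then forces $S^n$ below $2\epsilon$ by that time for all large $n$, giving $\zeta_\epsilon(x_n)\le\zeta_\epsilon(x)+\eta$. The lower bound is symmetric, using an $\epsilon''<\epsilon$: before time $\zeta_{\epsilon''}(x)$ the path $S$ stays strictly above $2\epsilon''$, hence (for large $n$, by uniform convergence) $S^n$ stays above $2\epsilon$, so $\zeta_\epsilon(x_n)\ge\zeta_{\epsilon''}(x)$, and continuity sends $\zeta_{\epsilon''}(x)\uparrow\zeta_\epsilon(x)$ as $\epsilon''\uparrow\epsilon$.

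Given $\zeta_\epsilon(x_n)\to\zeta_\epsilon(x)=:\tau$, the convergence of the stopped trajectories is a two-term estimate. For each $t$,
\[
\bigl\|x_{t\wedge\zeta_\epsilon(x_n)}^{n}-x_{t\wedge\tau}\bigr\|\le\bigl\|x_{t\wedge\zeta_\epsilon(x_n)}^{n}-x_{t\wedge\zeta_\epsilon(x_n)}\bigr\|+\bigl\|x_{t\wedge\zeta_\epsilon(x_n)}-x_{t\wedge\tau}\bigr\|,
\]
where I abbreviate $\|\cdot\|$ for the Euclidean norm on $\mathbb{R}^2$. The first term is bounded by $\sup_{[0,T]}\|x_n-x\|\to0$, the uniform convergence. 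The second term is bounded by the modulus of continuity of $x$ evaluated at $|t\wedge\zeta_\epsilon(x_n)-t\wedge\tau|\le|\zeta_\epsilon(x_n)-\tau|\to0$; since $x$ is continuous on the compact $[0,T]$ it is uniformly continuous, so this term vanishes uniformly in $t$. Hence the stopped paths converge uniformly, and because the limit $(x_{t\wedge\tau})_{t}$ is continuous, uniform convergence is the same as Skorohod convergence, which is the assertion.

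The main obstacle is entirely concentrated in the lower bound $\liminf_n\zeta_\epsilon(x_n)\ge\zeta_\epsilon(x)$: this is precisely where the continuity hypothesis on $\epsilon'\mapsto\zeta_{\epsilon'}(x)$ must be used, and it is the step that fails without it. If $S$ were to reach the level $2\epsilon$ for the first time only tangentially, a downward perturbation of size $o(1)$ could create an earlier crossing and collapse the hitting time, so no deterministic lower bound survives. I would therefore be careful to phrase that half of the argument purely in terms of the level $\epsilon''<\epsilon$ and to invoke the hypothesis explicitly when letting $\epsilon''\uparrow\epsilon$; everything else is routine manipulation of uniform convergence and uniform continuity.
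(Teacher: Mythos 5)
Your overall architecture is sound and in fact cleaner than the paper's: reducing Skorohod convergence to uniform convergence (legitimate, since the limit $x$ is continuous), proving $\zeta_\epsilon(x_n)\to\zeta_\epsilon(x)$ by two one-sided bounds, and then concluding by the triangle inequality together with uniform continuity of $x$. The paper instead works directly with the time-changes $\lambda_n$ of the Skorohod metric and proves the convergence of the stopping times by a rather terse contradiction argument, so your route is a genuine and welcome simplification of the second half. However, the key step --- convergence of the stopping times --- is wrong as written: in both one-sided bounds your level comparisons point the wrong way. Since $\zeta_{\epsilon'}(x)=\inf\{t: S_t\le 2\epsilon'\}$ is \emph{non-increasing} in $\epsilon'$, knowing that $S$ has dropped below the \emph{higher} level $2\epsilon'$ ($\epsilon'>\epsilon$) tells you nothing about $S^n$ dropping below $2\epsilon$ (you only get $S^n\le 2\epsilon'+o(1)$, which exceeds $2\epsilon$); and knowing that $S$ stays above the \emph{lower} level $2\epsilon''$ ($\epsilon''<\epsilon$) does not force $S^n$ to stay above $2\epsilon$. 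Moreover $\zeta_{\epsilon''}(x)\ge\zeta_\epsilon(x)$ for $\epsilon''<\epsilon$, so it decreases, not increases, to $\zeta_\epsilon(x)$ as $\epsilon''\uparrow\epsilon$.

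You have also misidentified which half is delicate. The bound $\liminf_n\zeta_\epsilon(x_n)\ge\zeta_\epsilon(x)$ is the soft one and needs no hypothesis beyond continuity of $x$: on the compact interval $[0,\zeta_\epsilon(x)-\eta]$ one has $S>2\epsilon$, hence $\min S=2\epsilon+\delta$ with $\delta>0$, and uniform convergence keeps $S^n>2\epsilon$ there, so $\zeta_\epsilon(x_n)>\zeta_\epsilon(x)-\eta$; your worry about a downward perturbation creating a much earlier crossing is ruled out by exactly this compactness argument. It is the bound $\limsup_n\zeta_\epsilon(x_n)\le\zeta_\epsilon(x)$ that requires the continuity of $\epsilon'\mapsto\zeta_{\epsilon'}(x)$: the dangerous path is one that touches the level $2\epsilon$ tangentially and returns upward, in which case an upward perturbation misses the touch and the hitting time jumps forward. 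The correct argument there is: choose $\epsilon''<\epsilon$ with $\zeta_{\epsilon''}(x)<\zeta_\epsilon(x)+\eta$ (this is where left-continuity of $\epsilon'\mapsto\zeta_{\epsilon'}(x)$ at $\epsilon$ enters), note that $S=2\epsilon''<2\epsilon$ at time $\zeta_{\epsilon''}(x)$, and conclude that $S^n<2\epsilon$ there for $n$ large, whence $\zeta_\epsilon(x_n)\le\zeta_{\epsilon''}(x)<\zeta_\epsilon(x)+\eta$. With the two halves swapped and the levels used in the correct directions, the rest of your proof goes through.
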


\begin{proof} We first prove that $\zeta_{\epsilon}(x_n)$ converges to $\zeta_{\epsilon}(x)$ when $n$ goes to infinity.
For any $\delta>0$, since $\epsilon'\mapsto\zeta_{\epsilon}(x)$ is continuous in $\epsilon$, there exists $n'\in\mathbb{Z}_+^*$ such that $\zeta_{\epsilon-1/n'}(x)-\delta<\zeta_{\epsilon}(x)<\zeta_{\epsilon+1/n'}(x)+\delta$. Now let us assume that $\zeta_{\epsilon}(x_n)$ does not converge to $\zeta_{\epsilon}(x)$ when $n$ goes to infinity. Then there exists $\delta$ such that for all $n$ there exists $k_n>n$ such that $|\zeta_{\epsilon}(x_{k_n})-\zeta_{\epsilon}(x)|>\delta$. Then there exists $m$ such that
$$\underset{n\rightarrow+\infty}{\overline{\lim}}\;\underset{0\leq t\leq \zeta_{\epsilon-1/m}(x)}{\sup}\; |x^1_n(t)+x^2_n(t)-(x^1(t)+x^2(t))|\geq 1/m$$
which is impossible if $x$ is continuous. Now we prove that $(x_n)_{.\wedge\zeta_{\epsilon}(x_n)}$ converges to $x_{.\wedge\zeta_{\epsilon}(x)}$ when $n$ goes to infinity. Let us denote by $r(v,w)$ the Euclidean distance between two points $v$ and $w$ of $\mathbb{R}^2$. Since $x_n$ converges to $x$ in $\mathbb{D}([0,T],(\mathbb{R}_+)^2)$, there exists a sequence of strictly increasing functions $\lambda_n$ mapping $[0,\infty)$ onto $[0,\infty)$ such that \ben\label{xnconverge}\gamma(\lambda_n)\underset{n\rightarrow+\infty}{\longrightarrow}0 \quad\text{and}\quad\underset{n\rightarrow+\infty}{\lim}\;\underset{0\leq t\leq T}{\sup}\;r(x_n(t),x(\lambda_n(t))=0\een where $\gamma(\lambda)=\underset{0\leq t<s}{\sup}\left|\log\frac{\lambda(s)-\lambda(t)}{s-t}\right|$ (\cite{EthierKurtz}, p. $117$). Now for all $t\geq0$, \ba r(x_n(t\wedge\zeta_{\epsilon}(x_n)),x(\lambda_n(t)\wedge\zeta_{\epsilon}(x))&\leq r(x_n(t\wedge\zeta_{\epsilon}(x_n)),x(\lambda_n(t\wedge\zeta_{\epsilon}(x_n))))\\&+r(x(\lambda_n(t\wedge\zeta_{\epsilon}(x_n))),x(\lambda_n(t)\wedge\zeta_{\epsilon}(x)), \quad\text{and}\ea
\ba r(x(\lambda_n(t\!\wedge\!\zeta_{\epsilon}(x_n))),x(\lambda_n(t)\!\wedge\!\zeta_{\epsilon}(x)))\!&=\!r(x(\lambda_n(\zeta_{\epsilon}(x_n))),x(\zeta_{\epsilon}(x)))\mathbf{1}_{\{t>\zeta_{\epsilon}(x_n),\lambda_n(t)>\zeta_{\epsilon}(x)\}}\\&+\!r(x(\zeta_{\epsilon}(x)), x(\lambda_n(t)))\mathbf{1}_{\{t\leq\zeta_{\epsilon}(x_n),\lambda_n(t)>\zeta_{\epsilon}(x)\}}\\&+\!r(x(\lambda_n(\zeta_{\epsilon}(x_n))),x(\lambda_n(t)))\mathbf{1}_{\{t>\zeta_{\epsilon}(x_n),\lambda_n(t)\leq\zeta_{\epsilon}(x)\}}.\ea Therefore, using that $x$ is continuous, that $\zeta_{\epsilon}(x_n)\rightarrow\zeta_{\epsilon}(x)$ and that $\underset{0\leq t\leq T}{\sup}|\lambda_n(t)-t|\rightarrow0$ when $n$ goes to infinity, and from Equation \eqref{xnconverge}, we obtain that $\underset{n\rightarrow+\infty}{\lim}\underset{0\leq t\leq T}{\sup}\!r(x_n(t\wedge\zeta_{\epsilon}(x_n)),x(\lambda_n(t)\wedge\zeta_{\epsilon}(x))=0$ which gives the result.
\end{proof}

\begin{proof}[Proof of Corollary \ref{corNX}] Note that the function $\zeta_{\epsilon}$ defined in Lemma \ref{lemcoro} satisfies $T^K_{\epsilon}=\zeta_{\epsilon}(\psi_1(Z^K),\psi_2(Z^K))=\inf\{t\in[0,T]:N^K_t\leq\epsilon\}$, and $T_{\epsilon}=\zeta_{\epsilon}(N^A,N^a)=\inf\{t\in[0,T]:N_t\leq\epsilon\}$. From the Theorem $3.3$ of \cite{Pinsky2008}, we know that the function $\epsilon'\mapsto\zeta_{\epsilon}(N^A,N^a)$ is almost surely continuous in $\epsilon$.
Therefore from Lemma \ref{lemcoro}, the function $f$ such that for all $x\in\mathbb{D}([0,T],(\mathbb{R}_+)^2)$, $f(x)=(x_{t\wedge\zeta_{\epsilon}(x)},t\in[0,T])$ is continuous in almost all trajectories of the diffusion process $(N^A,N^a)$. Therefore from Corollary $1.9$ p.$103$ of \cite{EthierKurtz} and Theorem \ref{TheoremConvergenceNX}, if the sequence of random variables $(\psi_1(Z^K_0),\psi(Z^K_0))\in (\mathbb{R}_+)^2$ converges in law toward a random variable $(N^A_0,N^a_0)$ when $K$ goes to infinity, then for all $T>0$,the sequence of stochastic processes $(\psi_1(Z^K_{.\wedge T^K_{\epsilon}}),\psi(Z^K_{.\wedge T^K_{\epsilon}}))$ converges in law in $\mathbb{D}([0,T],(\mathbb{R}_+)^2)$ toward $(N^A_{.\wedge T_{\epsilon}},N^a_{.\wedge T_{\epsilon}})$. Since the function $(n^A,n^a)\mapsto\left(\frac{n^A+n^a}{2},\frac{n^A}{n^A+n^a}\right)$ is lipschitz continuous on $\{(n^A,n^a)\in(\mathbb{R_+})^2:n^A+n^a\geq2\epsilon\}$, we get the result.
\end{proof}

\bigskip
\begin{rem}\label{remdiplohaplo}
The diffusion process $(N_t,X_t)_{t\geq0}$ of Corollary \ref{corWF} can be compared to the haploid neutral population (which corresponds to a stochastic Lotka-Volterra process) studied in detail in \cite{CattiauxMeleard2009} and defined by:  
\ban\label{diffhaplo}
dH^1_t&=\sqrt{2\gamma H^1_t}dB^{1,h}_t+(\beta-\delta-\alpha(H^1_t+H^2_t))H^1_tdt\\
dH^2_t&=\sqrt{2\gamma H^2_t}dB^{2,h}_t+(\beta-\delta-\alpha(H^1_t+H^2_t))H^2_tdt
\ean where $B^{1,h}$ and $B^{2,h}$ are independent Brownian motions. Here, $H^1$ is the number of alleles $A$ while $H^2$ is the number of alleles $a$. $N^h=H^1+H^2$ is then the total number of individuals while $X^h=H^1/(H^1+H^2)$ is the proportion of alleles $A$ in the haploid population. We easily see that the total population size satisfies the same diffusion equation in the haploid and diploid populations. We therefore compare the stochastic processes $(N,X)$ and $(N^h,X^h)$. Now by Itô's formula, the stochastic process $(N^h,X^h)$ satisfies a diffusion equation that can be written using a new $2$-dimensional brownian motion $(\tilde{B}^1,\tilde{B}^2)$ as:
\ban\label{diffusionNXhaploid}
dN^h_t&=(\beta-\delta-\alpha N^h_t)N^h_tdt+\sqrt{2\gamma N^h_t}d\tilde{B}^1_t\\
dX^h_t&=\sqrt{\frac{2\gamma X^h_t(1-X^h_t)}{N^h_t}}d\tilde{B}^2_t
\ean
Then the differences between the haploid and the diploid neutral models only reside in a variation of the proportion of allele $A$ divided by $\sqrt{2}$ in the diploid population (see Equations \eqref{diffusionA1A2} and \eqref{diffusionNXhaploid}). However note from Equations \eqref{diffusionNXnoneutre} and \eqref{diffhaplo} that this apparently insignificant difference induce that the respective numbers of alleles $A$ and $a$ are directed by correlated Brownian motions in a diploid population which is not the case in a haploid population.
\end{rem}

\section{New change of variable and quasi-stationarity}\label{sectionQSD}

In this section we study the long-time behavior of the diffusion process $(N^A,N^a)$ introduced in Theorem \ref{TheoremConvergenceNX}. For any process $U$, we denote by $\mathbb{P}^U_x$ the distribution law of $U$ starting from a point $x$, and $\mathbb{E}^{U}_x$ the associated expectation. First, the process $N=N^A+N^a$ defined in Corollary \ref{corNX} reaches $0$ almost surely in finite time: 

\begin{prop}\label{extinction} Let $T_0=\inf\{t\geq0:N_t=0\}$. Under \eqref{hyp1}, $\mathbb{P}^N_x(T_0<+\infty)=1$ for all $x\in \mathbb{R}_+$,  and there exists $\lambda>0$ such that $\underset{x}{\sup}\;\mathbb{E}_x(e^{\lambda T_0})<+\infty$.
\end{prop}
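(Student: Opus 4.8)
The plan is to reduce the statement to a one-dimensional problem. The key observation is that, by Corollary \ref{corWF} (and the remark following it), the population size $N=N^A+N^a$ satisfies the autonomous scalar diffusion equation
\ben\label{scalarN}
dN_t=\sqrt{2\gamma N_t}\,dB^1_t+N_t(\beta-\delta-\alpha N_t)\,dt,
\een
whose coefficients do not involve $X$. Thus $N$ alone is a regular one-dimensional diffusion on $[0,\infty)$ with $0$ as an absorbing boundary, and the whole proposition is a statement about this single equation. The crucial structural fact is that under \eqref{hyp1} the quadratic interaction $g$ is positive, so in the neutral case $\alpha>0$; this gives a strictly negative drift $-\alpha N^2$ for large $N$, which is what will force extinction and, more strongly, exponential moments of the hitting time. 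I would first record that \eqref{scalarN} is dominated (pathwise, via a comparison argument as in Proposition \ref{propgpositif}) by a logistic diffusion with the same noise coefficient and drift bounded above by $N(\sup_i\beta_i-\inf_i\delta_i-mN)$ for some $m>0$, so it suffices to prove the result for such a logistic diffusion, which lets me work in the neutral case without loss of generality.

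Next I would set up the standard scale-function / speed-measure machinery for \eqref{scalarN}. First I compute the scale density $s(n)=\exp\!\left(-\int^n \frac{2\,b(u)}{\sigma^2(u)}\,du\right)$ with $b(n)=n(\beta-\delta-\alpha n)$ and $\sigma^2(n)=2\gamma n$, so that $\frac{2b(u)}{\sigma^2(u)}=\frac{\beta-\delta}{\gamma}-\frac{\alpha}{\gamma}u$ and hence $s(n)=c\,n^{-(\beta-\delta)/\gamma}e^{(\alpha/2\gamma)n^2}$ up to a constant. The point is that because of the $+\tfrac{\alpha}{2\gamma}n^2$ in the exponent, the scale function $S(n)=\int_1^n s(u)\,du$ diverges to $+\infty$ as $n\to\infty$, so the boundary at $+\infty$ is not attainable (the process cannot escape to infinity). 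Meanwhile the boundary at $0$ is reached: I would verify via Feller's test, or directly by checking that $S(0^+)$ is finite while the relevant speed-measure integral forces absorption, that $0$ is an accessible (exit or regular, hence absorbing by our convention) boundary. Together these give $\mathbb{P}^N_x(T_0<\infty)=1$ for every starting point $x\in\mathbb{R}_+$.

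For the exponential moment $\sup_x\mathbb{E}_x(e^{\lambda T_0})<\infty$, the approach is to exhibit a Lyapunov function. Concretely I would look for a positive function $h\in\mathcal{C}^2((0,\infty))$ and a constant $\lambda>0$ such that the generator $\mathcal{L}=\gamma n\,\partial_{nn}+n(\beta-\delta-\alpha n)\,\partial_n$ satisfies $\mathcal{L}h\le-\lambda h$ on $(0,\infty)$, with $h$ bounded below by a positive constant; then Dynkin's formula applied to $e^{\lambda t}h(N_t)$, together with $\mathbb{P}_x(T_0<\infty)=1$, yields $\mathbb{E}_x(e^{\lambda T_0})\le h(x)/\inf h$, and the uniformity in $x$ follows if $h$ can moreover be taken bounded above. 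A natural candidate is an affine or bounded increasing $h$; the negative drift $-\alpha n^2$ makes $\mathcal{L}h$ strongly negative for large $n$, while near $0$ one uses that the coefficients vanish. I expect the main obstacle to be precisely this uniformity over all starting points $x$, since $\mathbb{E}_x(e^{\lambda T_0})$ a priori grows with $x$: the estimate has to exploit the super-linear restoring drift to show the time to come down from a large level $x$ has exponential moments that do not blow up uniformly. This is where the hypothesis \eqref{hyp1} (forcing $\alpha>0$, i.e. genuine competition) is indispensable, and I would lean on the logistic-diffusion comparison together with a bounded supersolution $h$ to close the uniform bound; one may alternatively invoke the corresponding result already established for the haploid Lotka--Volterra diffusion in \cite{CattiauxMeleard2009}, to which \eqref{scalarN} is identical after the comparison step.
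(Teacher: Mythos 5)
Your argument is essentially the paper's: under \eqref{hyp1} one dominates $N$ (pathwise, exactly as in Proposition \ref{propgpositif}) by the logistic Feller diffusion $d\overline{N}_t=\sqrt{2\gamma\overline{N}_t}\,dB^1_t+\overline{N}_t(\sup_i\beta_i-\inf_i\delta_i-m\overline{N}_t)\,dt$ and then invokes the known one-dimensional extinction and uniform exponential-moment result for that diffusion (the paper cites Theorem $5.2$ of \cite{CattiauxCollet...2009}), which is precisely the fallback you offer at the end. Your supplementary scale-function/Lyapunov sketch is not load-bearing once you cite that result — and note a small slip there: integrating $\frac{\beta-\delta}{\gamma}-\frac{\alpha}{\gamma}u$ gives a factor $e^{-(\beta-\delta)n/\gamma}$, not $n^{-(\beta-\delta)/\gamma}$, in the scale density — but the conclusion is unaffected.
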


\begin{proof}
Under \eqref{hyp1}, as in the proof of Proposition \ref{propgpositif}, there exists a positive constant $m$ such that $N$ is stochastically dominated by a diffusion process $\overline{N}$ satisfying $d\overline{N}_t=\sqrt{2\gamma \overline{N}_t} dB^1_t+\overline{N}_t(\underset{i}{\sup}\,\beta_i-\underset{i}{\inf}\,\delta_i-m\overline{N}_t)dt$. Theorem $5.2$ of \cite{CattiauxCollet...2009} gives the result for $\overline{N}$ and therefore for $N$.
\end{proof}

\bigskip The long-time behavior of the diffusion $(N^a,N^A)$ is therefore trivial and we now study the long-time behavior of this diffusion process conditioned on non-extinction, i.e. conditioned on not reaching the absorbing state $(0,0)$. In particular, we are interested in studying the possibility of a long-time coexistence of the two alleles $A$ and $a$ in the population conditioned on non-extinction.

\subsection{New change of variables}

To study the quasi-stationary behavior of the diffusion $(N,X)$ conditioned on non-extinction, we need to change variables in order to obtain a $2$-dimensional Kolmogorov diffusion (i.e. a diffusion process with a diffusion coefficient equal to $1$ and a gradient-type drift coefficient) whose quasi-stationary behavior can be most easily derived. Such ideas have been developed in \cite{CattiauxCollet...2009} and \cite{CattiauxMeleard2009}. Let us define, as long as $N_t>0$:
\ban\label{changementvariables}
S^1_t&=\sqrt{\frac{2N_t}{\gamma}}\cos\left(\frac{\arccos(2X_t-1)}{\sqrt{2}}\right)\\
S^2_t&=\sqrt{\frac{2N_t}{\gamma}}\sin\left(\frac{\arccos(2X_t-1)}{\sqrt{2}}\right).
\ean If $N_t=0$, we obviously set $S_t=(S^1_t,S^2_t)=(0,0)$. To begin with, simple calculations give the following Proposition, illustrated in Figure \ref{figureS}.

\begin{prop} \label{propespaceD}
For all $t\geq0$, $S^2_t\geq0$ and $S^2_t\geq uS^1_t$ with $u=tan\left(\frac{\pi}{\sqrt{2}}\right)<0$. \end{prop}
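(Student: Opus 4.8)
The plan is to work directly from the definitions in Equation \eqref{changementvariables}, treating the two claimed inequalities separately. Recall that $X_t\in[0,1]$, so $2X_t-1\in[-1,1]$ and hence $\arccos(2X_t-1)$ is well defined and takes values in $[0,\pi]$. Writing $\theta_t=\arccos(2X_t-1)/\sqrt{2}$, the angle $\theta_t$ ranges over $[0,\pi/\sqrt{2}]$, and since $N_t\geq0$ the radial factor $\sqrt{2N_t/\gamma}$ is nonnegative. Thus $S_t$ is, in polar form, a point at radius $\sqrt{2N_t/\gamma}\geq0$ and angle $\theta_t\in[0,\pi/\sqrt{2}]$.

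For the first inequality, I would simply note that $S^2_t=\sqrt{2N_t/\gamma}\,\sin(\theta_t)$, and since $\theta_t\in[0,\pi/\sqrt{2}]\subset[0,\pi]$ (because $\pi/\sqrt{2}<\pi$), we have $\sin(\theta_t)\geq0$; combined with the nonnegative radial factor this gives $S^2_t\geq0$.

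For the second inequality, the key observation is that the angle $\theta_t$ cannot exceed the fixed value $\pi/\sqrt{2}$. Geometrically, $S_t$ lies in the closed angular sector between the ray at angle $0$ (the positive $S^1$-axis) and the ray at angle $\pi/\sqrt{2}$. The inequality $S^2_t\geq u S^1_t$ with $u=\tan(\pi/\sqrt{2})$ is exactly the half-plane bounded by this second ray (on the side containing the sector). I would make this precise by writing $S^1_t=\rho_t\cos(\theta_t)$ and $S^2_t=\rho_t\sin(\theta_t)$ with $\rho_t=\sqrt{2N_t/\gamma}\geq0$, and then verifying that $\sin(\theta_t)-\tan(\pi/\sqrt{2})\cos(\theta_t)\geq0$ for all $\theta_t\in[0,\pi/\sqrt{2}]$. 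Since $\pi/\sqrt{2}\in(\pi/2,\pi)$, we have $\cos(\pi/\sqrt{2})<0$, so $u=\tan(\pi/\sqrt{2})<0$ as asserted; care is needed here because multiplying an inequality by $\cos(\theta_t)$ changes sign depending on whether $\theta_t$ is below or above $\pi/2$.

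The only genuine obstacle is the sign bookkeeping in this last step, so I would handle it by reducing to a single trigonometric inequality. Multiplying through by $\cos(\pi/\sqrt{2})<0$ (and using $\rho_t\geq0$), the claim $S^2_t\geq u S^1_t$ becomes, after clearing the tangent, the statement that $\sin(\theta_t)\cos(\pi/\sqrt{2})\leq \cos(\theta_t)\sin(\pi/\sqrt{2})$, i.e. $\sin(\theta_t-\pi/\sqrt{2})\leq0$. This holds precisely because $\theta_t-\pi/\sqrt{2}\in[-\pi/\sqrt{2},0]\subset[-\pi,0]$, on which $\sin$ is nonpositive. This reduces the whole proposition to the elementary facts $\sin\geq0$ on $[0,\pi]$ and $\sin\leq0$ on $[-\pi,0]$, together with the range constraint $\theta_t\in[0,\pi/\sqrt{2}]$ coming from $X_t\in[0,1]$, and no stochastic input is needed beyond the pathwise bounds $N_t\geq0$ and $X_t\in[0,1]$.
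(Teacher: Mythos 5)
Your proof is correct and follows essentially the same route as the paper's: both write $S_t$ in polar form with angle $\theta_t=\arccos(2X_t-1)/\sqrt{2}\in[0,\pi/\sqrt{2}]$ and nonnegative radius, and read off the two inequalities from this angular constraint. The only difference is cosmetic: where the paper splits the second inequality into the cases $\theta_t\in[0,\pi/2]$ (both coordinates nonnegative and $u<0$) and $\theta_t\in\left]\pi/2,\pi/\sqrt{2}\right]$ (where $S^1_t<0$ and $S^2_t/S^1_t=\tan\theta_t\leq u$), you fold both cases into the single inequality $\sin\left(\theta_t-\pi/\sqrt{2}\right)\leq0$, which also quietly covers the degenerate point $N_t=0$.
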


\medskip
\begin{proof}
For all $t\geq0$, $2X_t-1\in[-1,1]$, which gives that $\frac{\arccos(2X_t-1)}{\sqrt{2}}\in[0,\pi/\sqrt{2}]$ and $\sin\left(\frac{\arccos(2X_t-1)}{\sqrt{2}}\right)>0$. Then $S^2_t\geq0$ for all $t\geq0$. Now if $\frac{\arccos(2X_t-1)}{\sqrt{2}}\in[0,\pi/2]$, then $S^1_t\geq0$ and $S^2_t\geq0$, so $S^2_t\geq uS^1_t$. Finally, if $\frac{\arccos(2X_t-1)}{\sqrt{2}}\in]\pi/2,\pi/\sqrt{2}]$, then $S^1_t<0$, $S^2_t\geq0$, and $\frac{S^2_t}{S^1_t}=\tan\left(\frac{\arccos(2X_t-1)}{\sqrt{2}}\right)\in]-\infty,u]$. Then $S^2_t\geq uS^1_t$.
\end{proof}

\begin{rem}
Let us define for all $(s_1,s_2)\in\mathbb{R}^2$, the sets $\mathbf{A}=\{s_2=0, s_1>0\}$, $\mathbf{a}=\{s_2=us_1, s_2>0\}$ and $\mathbf{0}=\{s_1=s_2=0\}$. The sets $\{S_t\in\mathbf{A}\}$, $\{S_t\in\mathbf{a}\}$, and $\{S_t\in\mathbf{0}\}$ are respectively equal to the sets $\{X_t=1\}$ (fixation of allele $A$), $\{X_t=0\}$ (fixation of allele $a$) and $\{N_t=0\}$ (extinction of the population).
\end{rem} 

We denote by $\mathcal{D}=\mathbf{R}\times\mathbf{R}_+\cap\{(S^1,S^2):S^2\geq uS^1\}$ the set of values taken by $S_t$ for $t\geq0$, $\partial \mathcal{D}=\mathbf{A}\cup\mathbf{a}\cup\mathbf{0}$ its boundary in $\mathbb{R}^2$, and $T_{\mathbf{D}}$ the hitting time of $\mathbf{D}$ for any $\mathbf{D}\subset \mathcal{D}$. $\mathbf{0}$, $\mathbf{A}\cup\mathbf{0}$ and $\mathbf{a}\cup\mathbf{0}$ are therefore absorbing sets and from Proposition \ref{extinction}, starting from any point $s\in\mathcal{D}$, $S$ reaches any of these sets almost surely in finite time. 

\begin{figure}\begin{center}
\scalebox{0.7}{
\begin{pspicture}(-5,-1.2)(7,7)
\psline{->}(0,0)(6,0)\psline{->}(0,0)(0,5.5)\psline(0,0)(-4,5.25)
\put(6.2,-0.2){$S^1$}\put(0,5.6){$S^2$}\put(0.2,4){$\mathbf{a}_0$}
\psline(0,0)(5,3.8)\put(5.1,3.9){$\mathbf{A}_0$}\psline(0,0)(3,5)\put(3.1,5.1){$\mathbf{M}$}

\uput{0}[0]{-53}(-3.2,3.2){$\mathbf{a}=\{s_2=us_1\}$}\put(3,-0.4){$\mathbf{A}=\{s_2=0\}$}\put(-1.3,-0.4){$\mathbf{0}=\{s_1=s_2=0\}$}

\psdot[dotsize=0.1](0,0)
\psdot[dotstyle=x, dotsize=0.2](4.5,2)\psline{<->}(0,0)(4.5,2)\psarc(0,0){4.1}{0}{24}\put(4.1,0.9){$\frac{arccos(2X-1)}{\sqrt{2}}$}\put(4.6,2.1){$(s_1,s_2)=\psi(n,x)$}\put(1.9,0.4){$\sqrt{\frac{\gamma N}{2}}$}

\pstGeonode[PointName=none,PointSymbol=none](0,0){O}
\pstGeonode[PointName=none,PointSymbol=none](-4,5.25){B}
\pstGeonode[PointName=none,PointSymbol=none](5,3.8){A}
\pstRightAngle{B}{O}{A}
\end{pspicture}
}
\end{center}
\caption[Set of values taken by the diffusion $S$]{\label{figureS}Set $\mathcal{D}$ of the values taken by $S_t$, for $t\geq0$.}
\end{figure}
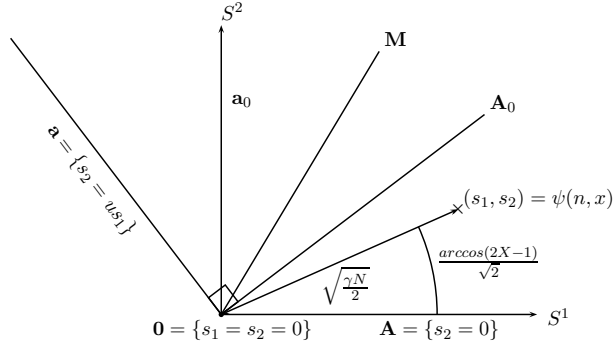 
Finally,

\begin{prop}\label{cgtvariableS}
The transformation \ba\psi:\!\mathbb{R}_+^*\!\times\![0,1]\!&\rightarrow\mathcal{D}\setminus\textbf{0}\\
(n,x)&\mapsto\!(s_1,s_2)=\!\left(\sqrt{\frac{2n}{\gamma}}\cos\!\left(\frac{\arccos(2x-1)}{\sqrt{2}}\right)\!,\sqrt{\frac{2n}{\gamma}}\sin\!\left(\frac{\arccos(2x-1)}{\sqrt{2}}\right)\!\right)\ea introduced in Equation \eqref{changementvariables} is a bijection.
\end{prop}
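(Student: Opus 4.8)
The plan is to recognize $\psi$ as a polar-coordinate parametrization of the angular sector $\mathcal{D}$, factored through the substitution $r=\sqrt{2n/\gamma}$ and $\theta=\arccos(2x-1)/\sqrt{2}$, so that $\psi(n,x)=(r\cos\theta,r\sin\theta)$.

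First I would show that the intermediate change of variables $(n,x)\mapsto(r,\theta)$ is a bijection from $\mathbb{R}_+^*\times[0,1]$ onto $\mathbb{R}_+^*\times[0,\pi/\sqrt{2}]$. The radial part $n\mapsto r=\sqrt{2n/\gamma}$ is a continuous strictly increasing bijection of $\mathbb{R}_+^*$ onto itself, with inverse $n=\gamma r^2/2$. For the angular part, $x\mapsto 2x-1$ is an affine bijection of $[0,1]$ onto $[-1,1]$, $\arccos$ is a continuous strictly decreasing bijection of $[-1,1]$ onto $[0,\pi]$, and division by $\sqrt{2}$ carries $[0,\pi]$ bijectively onto $[0,\pi/\sqrt{2}]$; composing, $x\mapsto\theta$ is a bijection with inverse $x=(1+\cos(\sqrt{2}\,\theta))/2$.

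It then remains to prove that the polar map $P:(r,\theta)\mapsto(r\cos\theta,r\sin\theta)$, restricted to $r>0$ and $\theta\in[0,\pi/\sqrt{2}]$, is a bijection onto $\mathcal{D}\setminus\mathbf{0}$. Injectivity is immediate: from $P(r_1,\theta_1)=P(r_2,\theta_2)$ one gets $r_1=r_2$ by taking Euclidean norms, hence $(\cos\theta_1,\sin\theta_1)=(\cos\theta_2,\sin\theta_2)$, and since $\pi/\sqrt{2}<\pi<2\pi$ the map $\theta\mapsto(\cos\theta,\sin\theta)$ is injective on $[0,\pi/\sqrt{2}]$, so $\theta_1=\theta_2$. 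That the image is contained in $\mathcal{D}\setminus\mathbf{0}$ is exactly the content of Proposition \ref{propespaceD}. The only genuinely new point is the reverse inclusion (surjectivity): every $(s_1,s_2)\in\mathcal{D}\setminus\mathbf{0}$ must have polar angle in $[0,\pi/\sqrt{2}]$. I would set $r=\sqrt{s_1^2+s_2^2}>0$ and let $\theta\in[0,2\pi)$ be the unique angle with $(s_1,s_2)=r(\cos\theta,\sin\theta)$; the constraint $s_2\geq 0$ forces $\theta\in[0,\pi]$. To obtain $\theta\leq\pi/\sqrt{2}$ I split on the sign of $s_1$: if $s_1\geq 0$ then $\theta\in[0,\pi/2]\subset[0,\pi/\sqrt{2}]$; if $s_1<0$ then $\theta\in(\pi/2,\pi]$, and since $\tan$ is increasing on $(\pi/2,\pi)$, the defining inequality $s_2\geq us_1$ — equivalently $s_2/s_1\leq u=\tan(\pi/\sqrt{2})$ after dividing by $s_1<0$ and reversing — reads $\tan\theta\leq\tan(\pi/\sqrt{2})$ and yields $\theta\leq\pi/\sqrt{2}$. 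Pulling $(r,\theta)$ back through the inverse from the first step produces the unique preimage, with $n=\gamma r^2/2$ and $x=(1+\cos(\sqrt{2}\,\theta))/2\in[0,1]$.

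I expect the sign-flip bookkeeping in the case $s_1<0$ to be the only delicate point: one must track that multiplying $s_2\geq us_1$ by $1/s_1<0$ reverses the inequality, and that $u<0$ lies in the range $(\pi/2,\pi)$ where $\tan$ is monotone, so that the inequality between slopes is genuinely equivalent to the inequality $\theta\leq\pi/\sqrt{2}$ between angles. Everything else reduces to the elementary monotonicity of $\sqrt{\cdot}$, $\arccos$, and the affine maps involved.
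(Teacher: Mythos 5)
Your proof is correct and follows essentially the same route as the paper's, which simply exhibits the polar-coordinate inverse $n=\gamma\left((s_1)^2+(s_2)^2\right)/2$ and $x=\left(1+\cos\left(\sqrt{2}\arctan(s_2/s_1)\right)\right)/2$ (with the $+\pi$ shift when $s_1\leq0$) and declares the range checks obvious. The only substantive addition in your write-up is that you actually verify the one non-trivial point the paper compresses into ``easily''/``obviously'': that the sector condition $s_2\geq us_1$ with $u=\tan(\pi/\sqrt{2})<0$ forces the polar angle into $[0,\pi/\sqrt{2}]$, via the monotonicity of $\tan$ on $(\pi/2,\pi)$ and the sign reversal when dividing by $s_1<0$.
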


\begin{proof}
For any $(s_1,s_2)\in\mathcal{D}\setminus\textbf{0}$, we easily get the following inverse transformation: \be\begin{array}{ll}x=\left\{\begin{array}{l}
\frac{1+\cos\left(\sqrt{2}\arctan\left(\frac{s_2}{s_1}\right)\right)}{2}\quad\text{if $s_1\geq0$,}\\
\frac{1+\cos\left(\sqrt{2}\arctan\left(\frac{s_2}{s_1}+\pi\right)\right)}{2}\quad\text{if $s_1\leq0$,}
\end{array}\right.&\quad\text{and}\quad n=\frac{\left((s_1)^2+(s_2)^2\right)\gamma}{2}
,\end{array}\ee for which we obviously have $n\in\mathbb{R}^*_+$ and $x\in[0,1]$.
\end{proof}

\bigskip Now from Itô's formula, $S$ satisfies the following diffusion equation:\ban\label{diffusionSnonneutregenerale}
dS^1_t&=dW^1_t-q_1(S_t)dt\\
dS^2_t&=dW^2_t-q_2(S_t)dt,
\ean where, in the neutral case (Equation \eqref{neutre}), $q(s)$ is defined for all $s=(s_1,s_2)\in\mathcal{D}$ such that $s_1\geq0$ by
\ban \label{qneutre}
q(s)=\left(\begin{array}{c}
-\frac{s_2}{(s_1)^2+(s_2)^2}\frac{1}{\sqrt{2}\tan\left(\sqrt{2}\arctan\left(\frac{s_2}{s_1}\right)\right)}\\-s_1\left[\left(\beta-\delta-\frac{\alpha\gamma}{2}((s_1)^2+(s_2)^2)\right)\frac{1}{2}-\frac{1}{(s_1)^2+(s_2)^2}\right] \\  \\
\frac{s_1}{(s_1)^2+(s_2)^2}\frac{1}{\sqrt{2}\tan\left(\sqrt{2}\arctan\left(\frac{s_2}{s_1}\right)\right)}\\-s_2\left[\left(\beta-\delta-\frac{\alpha\gamma}{2}((s_1)^2+(s_2)^2)\right)\frac{1}{2}-\frac{1}{(s_1)^2+(s_2)^2}\right]
\end{array} \right)
\ean and when $s_1\leq0$ by
\ba
q(s)=\left(\begin{array}{c}
-\frac{s_2}{(s_1)^2+(s_2)^2}\frac{1}{\sqrt{2}\tan\left(\sqrt{2}\left(\arctan\left(\frac{s_2}{s_1}\right)+\pi\right)\right)}\\-s_1\left[\left(\beta-\delta-\frac{\alpha\gamma}{2}((s_1)^2+(s_2)^2)\right)\frac{1}{2}-\frac{1}{(s_1)^2+(s_2)^2}\right] \\ \\
\frac{s_1}{(s_1)^2+(s_2)^2}\frac{1}{\sqrt{2}\tan\left(\sqrt{2}\left(\arctan\left(\frac{s_2}{s_1}\right)+\pi\right)\right)}\\-s_2\left[\left(\beta-\delta-\frac{\alpha\gamma}{2}((s_1)^2+(s_2)^2)\right)\frac{1}{2}-\frac{1}{(s_1)^2+(s_2)^2}\right]
\end{array} \right).
\ea

The formula for $q$ in the general case and if $s_1\geq0$ is given in Appendix \ref{appq}. 

We now give conditions on the demographic parameters so that $S$ satisfies $dS_t=dW_t-\nabla Q(S_t)dt$, i.e. $q=(q_1,q_2)=\nabla Q$ for a real-valued function $Q$ of two variables. This requires at least that $\frac{\partial q_2}{\partial{s_1}}(s)=\frac{\partial q_1}{\partial{s_2}}(s)$ for all $s\in\mathcal{D}$. We state the following 

\begin{prop}\label{PropQnonneutre}
\begin{description}
\item[$(i)$] $\frac{\partial q_2(s)}{\partial  s_1}=\frac{\partial q_2(s)}{\partial  s_2}$ for all $s=(s_1,s_2)\in\mathcal{D}$ if and only $\alpha$ is symmetric, i.e. $\alpha_{12}=\alpha_{21}$, $\alpha_{31}=\alpha_{13}$, $\alpha_{23}=\alpha_{32}$. 
\item[$(ii)$] In this case we have \ban dS_t=dW_t-\nabla Q(S_t)dt,\label{diffkolmo}\ean 
with, in the neutral case and for all $s=(s_1,s_2)\in\mathcal{D}$,

\ben \label{equationQ} Q(s)=\left\{\!\!\begin{array}{l}
\frac{\ln((s_1)^2+(s_2)^2)}{2}+\frac{1}{2}\ln\left(\sin\left(\sqrt{2}\arctan\left(\frac{s_2}{s_1}\right)\right)\right)\\\phantom{\ln((s_1)^2+(s_2)^2)}-(\beta-\delta-\frac{\alpha\gamma}{4}((s_1)^2+(s_2)^2))\frac{(s_1)^2+(s_2)^2}{4}\;\text{if $s_1\geq0$}\\ \\
\frac{\ln((s_1)^2+(s_2)^2)}{2}+\frac{1}{2}\ln\left(\sin\left(\sqrt{2}\left(\arctan\left(\frac{s_2}{s_1}\right)+\pi\right)\right)\right)\\\phantom{\ln((s_1)^2+(s_2)^2)}-(\beta-\delta-\frac{\alpha\gamma}{4}((s_1)^2+(s_2)^2))\frac{(s_1)^2+(s_2)^2)}{4}\;\text{if $s_1\leq0$}.
\end{array}\right.\een This function $Q$ in the non-neutral case is given in Appendix \ref{appQ}.
\end{description}
\end{prop}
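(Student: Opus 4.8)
The plan is to prove both parts by direct computation on the explicit drift field $q=(q_1,q_2)$, exploiting that $\mathcal D$ is an angular sector of opening $\pi/\sqrt2<\pi$, hence convex and in particular simply connected, so that by the Poincar\'e lemma a $C^1$ field on $\mathcal D$ is a gradient if and only if it is closed, i.e. $\partial_{s_1}q_2=\partial_{s_2}q_1$. (The statement $\partial_{s_1}q_2=\partial_{s_2}q_2$ written in $(i)$ is evidently a typo for this closedness condition, as the sentence preceding the proposition makes clear.) I would work separately on the two charts $\{s_1\ge0\}$ and $\{s_1\le0\}$ on which $q$ is given by an explicit formula, and check at the end that the formulas — and hence the gradient relation — glue consistently across the line $\{s_1=0\}$.

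For $(i)$, I would decompose each component as $q=q^{\mathrm{geom}}+q^{\mathrm{dem}}$, separating the part coming purely from the It\^o correction of the change of variables $\psi$ (the terms $\pm s_i/((s_1)^2+(s_2)^2)$ together with the $\tan$-terms, all independent of the demographic parameters, since the diffusion coefficients of $(N,X)$ do not involve $\beta_i,\delta_i,\alpha_{ij}$) from the demographic part, which transforms through the first derivatives of $\psi$ and carries the coefficients $\beta_i-\delta_i$ and $\alpha_{ij}$. The geometric part is closed — indeed it is exactly $\nabla$ of the first two summands of the function $Q$ written in $(ii)$, so its closedness is a byproduct of $(ii)$. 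Thus the cross-derivative difference $\partial_{s_1}q_2-\partial_{s_2}q_1$ reduces to the demographic part, which I would compute from the general formula of Appendix \ref{appq}. The terms linear in $s$ (carrying $\beta_i-\delta_i$) and those carrying the symmetric combinations of the $\alpha_{ij}$ integrate against a function of $(s_1)^2+(s_2)^2$ and of the angular variable alone and are therefore closed; the expected outcome is that after cancellation $\partial_{s_1}q_2-\partial_{s_2}q_1$ equals an explicit, nowhere-vanishing function of $s$ times a fixed linear combination of the three antisymmetric differences $\alpha_{12}-\alpha_{21}$, $\alpha_{13}-\alpha_{31}$, $\alpha_{23}-\alpha_{32}$, the angular dependence of the three coefficient functions being linearly independent. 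Vanishing on all of $\mathcal D$ then forces each difference to be zero, i.e. $\alpha$ symmetric, and the converse is immediate.

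For $(ii)$, once $\alpha$ is symmetric the field is closed, hence by the Poincar\'e lemma equals $\nabla Q$ for some $Q$, determined up to an additive constant on each chart. Rather than integrate $q$, I would verify the closed-form expression by direct differentiation, checking $\partial_{s_1}Q=q_1$ and $\partial_{s_2}Q=q_2$ against the neutral formulas \eqref{qneutre}. The three summands of $Q$ differentiate cleanly: $\tfrac12\ln((s_1)^2+(s_2)^2)$ produces both the radial piece and the geometric term $s_i/((s_1)^2+(s_2)^2)$; $\tfrac12\ln\sin(\sqrt2\arctan(s_2/s_1))$ produces the $\tan$-term via $\partial_{s_1}\arctan(s_2/s_1)=-s_2/((s_1)^2+(s_2)^2)$ and $\partial_{s_2}\arctan(s_2/s_1)=s_1/((s_1)^2+(s_2)^2)$; and the polynomial term $-(\beta-\delta-\tfrac{\alpha\gamma}{4}((s_1)^2+(s_2)^2))\tfrac{(s_1)^2+(s_2)^2}{4}$ produces the demographic drift. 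The non-neutral potential recorded in Appendix \ref{appQ} is checked the same way against the general $q$ of Appendix \ref{appq}.

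The main obstacle is the bookkeeping in $(i)$: organizing the general-case drift of Appendix \ref{appq} so that the curl-free contributions visibly cancel and the residual is cleanly proportional to the antisymmetric part of $\alpha$. The only genuine subtlety beyond algebra is the branch of $\arctan$, which forces the two-chart description; I would handle it by performing the computation once for $s_1\ge0$ and transporting it to $s_1\le0$ via the substitution $\arctan(s_2/s_1)\mapsto\arctan(s_2/s_1)+\pi$ used to define $q$ there, and by checking continuity of $q$ and $Q$ across $\{s_1=0\}$ so that the gradient identity holds on all of $\mathcal D$.
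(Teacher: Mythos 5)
Your proposal is correct and follows essentially the same route as the paper: a direct computation of the curl of $q$ (working in the variables $n,x$ pulled back through $\psi$), showing that the geometric/It\^o part and the symmetric demographic contributions are automatically closed, so that $\partial_{s_2}q_1-\partial_{s_1}q_2$ reduces to a nowhere-degenerate prefactor times a quadratic polynomial in $x$ whose three coefficients are independent combinations of $\alpha_{12}-\alpha_{21}$, $\alpha_{13}-\alpha_{31}$, $\alpha_{23}-\alpha_{32}$, and then verifying the explicit potential $Q$ by differentiation chart by chart. Your reading of the displayed condition in $(i)$ as a typo for $\partial_{s_1}q_2=\partial_{s_2}q_1$ is also consistent with what the paper actually proves.
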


\begin{proof} For $(i)$,
we can decompose the functions $q_1$ and $q_2$ as:
\ban \label{formuleq1}q_1(s)&=\frac{\gamma}{2n}s_1+\frac{\gamma}{\sqrt{2}n}s_2\frac{2x-1}{4\sqrt{x(1-x)}}\\&-\frac{s_1}{2}[x^2U+2x(1-x)V+(1-x)^2W]\\&-\frac{s_2}{\sqrt{2}}\sqrt{x(1-x)}[x(U-V)+(1-x)(V-W)]
\ean and \ban \label{formuleq2}q_2(s)&=\frac{\gamma}{2n}s_2-\frac{\gamma}{\sqrt{2}n}s_1\frac{2x-1}{4\sqrt{x(1-x)}}\\&-\frac{s_2}{2}[x^2U+2x(1-x)V+(1-x)^2W]\\&+\frac{s_1}{\sqrt{2}}\sqrt{x(1-x)}[x(U-V)+(1-x)(V-W)]
\ean where 
\ban \label{formuleXNappendix}&\begin{array}{ll}x=\left\{\begin{array}{l}
\frac{1+\cos\left(\sqrt{2}\arctan\left(\frac{s_2}{s_1}\right)\right)}{2}\quad\text{if $s_1\geq0$,}\\ 
\frac{1+\cos\left(\sqrt{2}\arctan\left(\frac{s_2}{s1}+\pi\right)\right)}{2}\quad\text{if $s_1\leq0$,}
\end{array}\right.& n=\frac{\left((s_1)^2+(s_2)^2\right)\gamma}{2},
\end{array}\\ & U=\beta_1-\delta_1-n\left(\alpha_{11}x^2+\alpha_{21}2x(1-x)+\alpha_{31}(1-x)^2\right)\\ & V=\beta_2-\delta_2-n\left(\alpha_{12}x^2+\alpha_{22}2x(1-x)+\alpha_{32}(1-x)^2\right), \text{ and }\\& W=\beta_3-\delta_3-n\left(\alpha_{13}x^2+\alpha_{23}2x(1-x)+\alpha_{33}(1-x)^2\right).\ean From Equation \eqref{formuleXNappendix}, we easily obtain that: $$\frac{\partial n(s_1,s_2)}{\partial s_1}=\frac{s_1}{s_2}\frac{\partial n(s_1,s_2)}{\partial s_2}\quad\text{and}\quad\frac{\partial x(s_1,s_2)}{\partial s_1}=-\frac{s_2}{s_1}\frac{\partial x(s_1,s_2)}{\partial s_2}.$$ Finally, after some calculations and using that $$\frac{\partial n}{\partial s_1}=\gamma s_1\quad\text{and}\quad\frac{\partial x}{\partial s_1}\times\left(\frac{(s_1)^2+(s_2)^2}{s_2}\right)=\sqrt{2x(1-x)},$$ we obtain that $\frac{\partial q_1(s)}{\partial s_2}=\frac{\partial q_2(s)}{\partial s_1}$ if and only if for all $x\in[0,1]$, \ba x^2[\alpha_{21}\!-\alpha_{31}\!-\alpha_{12}\!+\alpha_{13}+\alpha_{32}-\alpha_{23}]
+x[\alpha_{31}\!-\alpha_{13}\!+2\alpha_{23}\!-2\alpha_{32}]
+[\alpha_{32}\!-\alpha_{23}]=0\ea which happens if and only if $\alpha$ is symmetric. For $(ii)$, the result comes from straightforward calculations that are given in the general case in Appendix \ref{appQ}.
\end{proof}

\bigskip Assuming now that $\alpha$ is symmetric, we can establish some sufficient conditions on the parameters $\alpha_{ij}$ so that the function $g$ introduced in Proposition \ref{propgpositif} is positive, i.e. Hypothesis \eqref{hyp1} is satisfied.
\begin{prop} \label{propconditionsalpha} Let us now assume that $\alpha_{ij}=\alpha_{ji}$ for all $i,j\in\{1,2,3\}$. If $\alpha_{ii}>0$ for all $i\in\{1,2,3\}$ and one of the following conditions is satisfied:
\begin{description}
\item[$(i)$] $\alpha_{ij}>0$ for all $i,j$. 
\item[$(ii)$] There exists $i\in\{1,2,3\}$ such that $\alpha_{ik}>0$ for all $k$, and $\alpha_{jl}^2<\alpha_{jj}\alpha_{ll}$ if $i$, $j$ and $l$ are all distinct.
\item[$(iii)$] There exists $i\in\{1,2,3\}$ such that $\alpha_{ii}\alpha_{jl}>\alpha_{ij}\alpha_{il}$,  $\alpha_{ij}^2<\alpha_{ii}\alpha_{jj}$, and $\alpha_{il}^2<\alpha_{ii}\alpha_{ll}$ where $i$, $j$ and $l$ are all distinct.
\item[$(iv)$] There exists $i\in\{1,2,3\}$ such that $\alpha_{ij}^2<\alpha_{ii}\alpha_{jj}$, $\alpha_{il}^2<\alpha_{ii}\alpha_{ll}$, and $(\alpha_{ii}\alpha_{jl}-\alpha_{ij}\alpha_{il})^2<(\alpha_{ii}\alpha_{ll}-\alpha_{il}^2)(\alpha_{ii}\alpha_{jj}-\alpha_{ij}^2)$ where $i$, $j$ and $l$ are all distinct.
\end{description} then Hypothesis \eqref{hyp1} is satisfied.
\end{prop}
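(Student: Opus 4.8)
The plan is to reduce Hypothesis \eqref{hyp1} to a strict copositivity statement for the symmetric matrix $(\alpha_{ij})$ and then treat the four cases by separating or completing the square with respect to the distinguished index $i$. As already observed in the proof of Proposition \ref{propgpositif}, writing $p_k=z_k/\phi_1(z)$ one has $g(z)=\phi_1(z)^2f(p_1,p_2,p_3)$ with $f(p)=\sum_{k,m}\alpha_{km}p_kp_m$, so that \eqref{hyp1} holds if and only if $f>0$ on the compact simplex $\Delta=\{p\in[0,1]^3:p_1+p_2+p_3=1\}$; equivalently $(\alpha_{ij})$ is strictly copositive. It therefore suffices to show, under each of (i)--(iv), that $f(p)=0$ with $p\geq0$ forces $p=0$.

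Cases (i) and (ii) are the easy ones. Under (i) every coefficient is positive, so $f(p)\geq(\min_{k,m}\alpha_{km})(p_1+p_2+p_3)^2=\min_{k,m}\alpha_{km}>0$ on $\Delta$. Under (ii), after relabelling so that $i=1$, the coefficients $\alpha_{11},\alpha_{12},\alpha_{13}$ are all positive; hence every monomial of $f$ containing $p_1$ is nonnegative for $p\geq0$, and what remains is the binary form $\alpha_{22}p_2^2+2\alpha_{23}p_2p_3+\alpha_{33}p_3^2$, which is positive definite because $\alpha_{22},\alpha_{33}>0$ and $\alpha_{23}^2<\alpha_{22}\alpha_{33}$. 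Thus $f(p)\geq0$, and $f(p)=0$ forces first $p_2=p_3=0$ (positive definiteness) and then $p_1=0$ (the term $\alpha_{11}p_1^2$), i.e. $p=0$.

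For (iii) and (iv) I would again relabel so that $i=1$ and complete the square with respect to $p_1$:
\be
f(p)=\alpha_{11}\left(p_1+\frac{\alpha_{12}p_2+\alpha_{13}p_3}{\alpha_{11}}\right)^2+R(p_2,p_3),
\ee
where the Schur complement $R$ satisfies
\be
\alpha_{11}R(p_2,p_3)=(\alpha_{11}\alpha_{22}-\alpha_{12}^2)p_2^2+2(\alpha_{11}\alpha_{23}-\alpha_{12}\alpha_{13})p_2p_3+(\alpha_{11}\alpha_{33}-\alpha_{13}^2)p_3^2.
\ee
Since $\alpha_{11}>0$, the squared term is nonnegative for every $p$, so the problem reduces to the strict copositivity of the binary form $R$ on $\{(p_2,p_3)\geq0\}$. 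Under (iii) the three coefficients of $\alpha_{11}R$ are all positive (these are precisely $\alpha_{12}^2<\alpha_{11}\alpha_{22}$, $\alpha_{13}^2<\alpha_{11}\alpha_{33}$ and $\alpha_{11}\alpha_{23}>\alpha_{12}\alpha_{13}$), so $R\geq0$ with equality only at $p_2=p_3=0$. Under (iv) the first two inequalities give positive diagonal coefficients and the third is exactly the discriminant condition $(\alpha_{11}\alpha_{23}-\alpha_{12}\alpha_{13})^2<(\alpha_{11}\alpha_{22}-\alpha_{12}^2)(\alpha_{11}\alpha_{33}-\alpha_{13}^2)$, which makes $R$ positive definite. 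In either case $f(p)=0$ forces $p_2=p_3=0$ through $R$ and then $p_1=0$ through the square, so $p=0$, as desired.

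The bulk of the work — and the only place where care is needed — is the passage in (iii)--(iv): one must verify that the displayed completion of the square is an exact identity (routine but slightly lengthy) and, more importantly, that the stated inequalities coincide term by term with the copositivity, respectively positive-definiteness, conditions for the reduced form $R$. The structural point that makes everything transparent is the choice of the distinguished index $i$ with $\alpha_{ii}>0$ as the variable in which to separate or complete the square, which collapses a three-variable copositivity question into a one-dimensional binary one; strictness on the boundary faces $\{p_k=0\}$ of $\Delta$ then follows immediately from the positivity of the diagonal entries, so that the minimum of $f$ cannot vanish.
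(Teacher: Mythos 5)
Your proof is correct and follows essentially the same route as the paper: both reduce \eqref{hyp1} to positivity of the quadratic form on the nonnegative orthant and then treat it as a quadratic in the distinguished variable $z_i$, with your cases (i)--(ii) matching the paper's root-location argument and your Schur-complement form $R$ in (iii)--(iv) being exactly the paper's discriminant $\Delta_1$ up to the factor $-4\alpha_{11}$. Your completion-of-the-square phrasing is a slightly cleaner packaging of the same computation, but there is no substantive difference in approach.
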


\begin{proof} Since $\alpha$ is symmetric, we have for all $z=(z_1,z_2,z_3)\in(\mathbb{R}_+)^3$: $$g(Z)=\alpha_{11}(z_1)^2+2\alpha_{12}z_1z_2+\alpha_{22}(z_2)^2+2\alpha_{23}z_2z_3+2\alpha_{13}z_1z_3+\alpha_{33}(z_3)^2.$$ Considering $g$ as a polynomial function of $z_1$, we easily obtain that $g$ is positive if $(1):$ the discriminant $\Delta_1(z_2,z_3)=(2\alpha_{12}z_2+2\alpha_{13}z_3)^2-4\alpha_{11}(\alpha_{22}(z_2)^2+\alpha_{33}(z_3)^2+2\alpha_{23}z_2z_3)$ is negative or if $(2):$ $(2\alpha_{12}z_2+2\alpha_{13}z_3)>\sqrt{\Delta_1(z_2,z_3)}$. If $\alpha_{12}>0$, $\alpha_{13}>0$, and $\alpha_{23}>0$ or  $\alpha_{23}^2<\alpha_{22}\alpha_{33}$ (case $(i)$ or $(ii)$), then $(2)$ is true for all $z\in\left(\mathbb{R}_+\right)^3$. If $\alpha_{11}\alpha_{22}>\alpha_{12}^2$, $\alpha_{11}\alpha_{33}>\alpha_{13}^2$, and $\alpha_{11}\alpha_{23} >\alpha_{12}\alpha_{13}$ or $(\alpha_{11}\alpha_{23}-\alpha_{12}\alpha_{13})^2<(\alpha_{11}\alpha_{33}-\alpha_{13}^2)(\alpha_{11}\alpha_{22}-\alpha_{12}^2)$ (case $(iii)$ or $(iv)$), then $(1)$ is true for all $z\in\left(\mathbb{R}_+\right)^3$, which gives the result, allowing in the end for permutations of indices $1$, $2$, and $3$. \end{proof}

\bigskip Note that these conditions mean that for Hypothesis \eqref{hyp1} to be true, we need that cooperation is not too strong or is compensated in some way by competition.

\subsection{Absorption of the diffusion process $S$}

In this section, we establish more precise results concerning the absorption of the process $S$ in the absorbing sets $\mathbf{0}$, $\mathbf{A}\cup\mathbf{0}$, $\mathbf{a}\cup\mathbf{0}$ and $\mathbf{A}\cup\mathbf{a}\cup\mathbf{0}$. 

\begin{thm}\label{theoremtemps}
\begin{description}
\item[$(i)$] For all $s\in \mathcal{D}\setminus\mathbf{0}$, $\mathbb{P}^S_s(T_{\mathbf{A}}\wedge T_{\mathbf{a}}<T_{\mathbf{0}})=1$.
\item[$(ii)$] Let $\mathbf{M}=\{(s_1,s_2)\in\mathcal{D}: s_2=tan(\frac{\pi}{2\sqrt{2}}\}$ (see Figure \ref{figureS}). For all $s\in\mathbf{M}$, $\mathbb{P}^S_s(T_{\mathbf{a}}<T_{\mathbf{0}})>0$ and  $\mathbb{P}^S_s(T_{\mathbf{A}}<T_{\mathbf{0}})>0$.
\item[$(iii)$] For all $s\in \mathcal{D}\setminus\partial \mathcal{D}$, $\mathbb{P}^S_s(T_{\mathbf{A}}<T_{\mathbf{0}})>0$, and $\mathbb{P}^S_s(T_{\mathbf{a}}<T_{\mathbf{0}})>0$.
\end{description}
\end{thm}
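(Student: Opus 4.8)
The plan is to exploit the polar structure that the change of variables of Proposition \ref{cgtvariableS} was designed to produce. Write $r=\sqrt{(S^1)^2+(S^2)^2}$ and let $\theta\in[0,\pi/\sqrt2]$ be the angle with $S^1=r\cos\theta$, $S^2=r\sin\theta$; by construction $r^2=2N/\gamma$ and $\theta=\arccos(2X-1)/\sqrt2$, so that $\mathbf A=\{\theta=0\}$, $\mathbf a=\{\theta=\pi/\sqrt2\}$ and $\mathbf 0=\{r=0\}$. I work in the neutral case, as elsewhere in the paper. The decisive feature of the potential \eqref{equationQ} is that its angular part is exactly $\tfrac12\ln\sin(\sqrt2\theta)$, independent of $r$, while the $r$-dependence is $\ln r$ plus the smooth term $R(r)=-(\beta-\delta-\tfrac{\alpha\gamma}{4}r^2)\tfrac{r^2}{4}$. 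Consequently, in polar coordinates both the angular drift $-r^{-2}\tfrac{1}{\sqrt2}\cot(\sqrt2\theta)$ and the angular diffusion $r^{-2}$ of the generator $\tfrac12\Delta-\nabla Q\cdot\nabla$ carry the same factor $r^{-2}$. For part $(i)$ I would therefore time-change by the angular clock $\tau=\int_0^t r_s^{-2}\,ds$ (which equals the quadratic variation of $\theta$), after which $\theta$ becomes the autonomous one-dimensional diffusion
\[
 d\Theta_\tau=d\hat W_\tau-\tfrac{1}{\sqrt2}\cot(\sqrt2\,\Theta_\tau)\,d\tau\qquad\text{on }(0,\pi/\sqrt2).
\]

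The key point is then a race, in the clock $\tau$, between $\Theta$ reaching the endpoints (fixation) and $r$ reaching $0$ (extinction). For the angular side, the scale density of $\Theta$ is $\mathfrak s'(\theta)=\sin(\sqrt2\theta)$, so the scale function is bounded on $[0,\pi/\sqrt2]$ and both endpoints are accessible; near each endpoint the drift behaves like $-\tfrac{1}{2\theta}$, i.e.\ like a Bessel process of dimension $0$, so $\Theta$ is absorbed at $\{0,\pi/\sqrt2\}$ at a finite clock-time $\tau^\ast<\infty$ almost surely (Feller's test, or comparison with $\mathrm{BES}(0)$). For the radial side, It\^o's formula gives for $\rho=\ln r$, in the clock $\tau$,
\[
 d\rho_\tau=d\hat B^r_\tau-\bigl(1+r_\tau R'(r_\tau)\bigr)\,d\tau,
\]
whose drift is bounded near the corner (it tends to $-1$ as $r\to0$); hence $\rho_\tau>-\infty$, that is $r_\tau>0$, for every finite $\tau$. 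Combining the two, at the finite absorption time $\tau^\ast$ of $\Theta$ the radius is still positive, so the original process reaches $\mathbf A\cup\mathbf a$ at the real time $t^\ast=\int_0^{\tau^\ast}r^2\,d\tau<\infty$ with $N_{t^\ast}>0$; since $N>0$ on $[0,t^\ast]$ this yields $T_{\mathbf A}\wedge T_{\mathbf a}=t^\ast<T_{\mathbf 0}$, proving $(i)$. I expect \textbf{this part to be the main obstacle}: making the time-change rigorous requires localizing away from $\partial\mathcal D$ (where the drift is singular both at the corner and along the rays) and passing to the limit, and it rests on the two boundary classifications just described, which together force fixation to strictly precede extinction.

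For $(ii)$ I would use the reflection $\theta\mapsto\pi/\sqrt2-\theta$ (equivalently $X\mapsto1-X$), which in the neutral case leaves $Q$ invariant since $\sin(\sqrt2\theta)$ is symmetric about $\theta=\pi/(2\sqrt2)$, fixes the bisector $\mathbf M$ pointwise and swaps $\mathbf A$ with $\mathbf a$. Hence for $s\in\mathbf M$ one has $\mathbb P^S_s(T_{\mathbf A}<T_{\mathbf 0})=\mathbb P^S_s(T_{\mathbf a}<T_{\mathbf 0})$. Because $\mathbf A\cup\mathbf 0$ and $\mathbf a\cup\mathbf 0$ are absorbing, the events $\{T_{\mathbf A}<T_{\mathbf 0}\}$ and $\{T_{\mathbf a}<T_{\mathbf 0}\}$ are disjoint, and by $(i)$ their union has probability one; therefore each has probability $\tfrac12>0$, which is $(ii)$.

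For $(iii)$ I would invoke the Stroock--Varadhan support theorem: on compact subsets of the interior $\mathcal D\setminus\partial\mathcal D$ the diffusion $dS=dW-\nabla Q\,dt$ has identity diffusion matrix and smooth (hence locally Lipschitz) drift, so the support of its law contains every smooth control path staying in the interior. Given $s\in\mathcal D\setminus\partial\mathcal D$, I would steer the process along such a path to a neighborhood of a chosen point of $\mathbf M$ of positive radius while avoiding $\partial\mathcal D$; by two-dimensional non-degeneracy the process then meets $\mathbf M$ before $\partial\mathcal D$ with positive probability, and applying $(ii)$ together with the strong Markov property gives $\mathbb P^S_s(T_{\mathbf A}<T_{\mathbf 0})>0$, and symmetrically for $\mathbf a$.
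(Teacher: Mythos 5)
Your argument is correct in substance, but it proves part $(i)$ by a genuinely different route from the paper. The paper establishes $(i)$ by a Girsanov comparison on the two half-sectors $\mathcal{D}_1=\{s_1\ge 0\}$ and $\mathcal{D}_2=\{us_2-s_1\ge0\}$: on each, $S$ is compared with a product process $H=(H^1,H^2)$ where $H^1$ has drift $-1/(2H^1)$ and $H^2$ is a Brownian motion, so that the pair of hitting times of zero has a law equivalent to Lebesgue measure and $S$ leaves each half-sector through $\mathbf{A}\cup\mathbf{a}_0$ (resp.\ $\mathbf{a}\cup\mathbf{A}_0$) before $\mathbf{0}$ almost surely; the martingale property of $X$ then shows that the number of back-and-forths between the interior rays $\mathbf{A}_0$ and $\mathbf{a}_0$ is geometric, hence finite, which forces absorption in $\mathbf{A}\cup\mathbf{a}$ before $\mathbf{0}$. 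You instead exploit the exact polar separation of $Q$ in the neutral case to run a skew-product/time-change argument: the angular process becomes an autonomous diffusion absorbed at $\{0,\pi/\sqrt2\}$ in finite clock time, while $\ln r$ has bounded drift in the angular clock near $r=0$ and therefore cannot reach $-\infty$ in finite clock time. Your computations check out (the angular drift is indeed $-\tfrac{1}{\sqrt2}\cot(\sqrt2\theta)$ and the radial drift in the clock tends to $-1$), and the BES$(0)$ comparison is the right one; note only that ``bounded scale function'' by itself does not give accessibility of the endpoints in finite time --- you need the Feller integral test $\int_0 (\mathfrak{s}(x)-\mathfrak{s}(0))\,m(dx)<\infty$ or the Bessel comparison, which you do supply. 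This route is arguably cleaner and more quantitative than the back-and-forth counting, since it identifies the law of the fixation mechanism exactly. What it costs is generality: the separation of variables holds only in the neutral case (in the non-neutral case the angular part of $Q$ depends on $r$, see Appendix \ref{appQ}), so to obtain the theorem as stated you still need the Girsanov equivalence with a neutral process that the paper invokes at the very end of its proof; you should state this reduction explicitly rather than only restricting to the neutral case. Your parts $(ii)$ and $(iii)$ coincide with the paper's in essence --- the same $X\mapsto 1-X$ symmetry for $(ii)$ (your disjointness observation, using that $\mathbf{A}\cup\mathbf{0}$ is absorbing and $T_{\mathbf 0}<\infty$ a.s., is the needed justification), and for $(iii)$ the support theorem where the paper uses Girsanov to reach $\mathbf{M}$ with positive probability; both are fine.
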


\begin{proof} We first consider the neutral case.  To prove $(i)$, we start with extending Girsanov approach as presented in \cite{CattiauxMeleard2009} (proof of Proposition $2.3$), on two different subsets of $\mathcal{D}$. Let us indeed define (see Figure \ref{figureS}) \ba \mathcal{D}_1&=\{(s_1,s_2)\in\mathcal{D}, s_1\geq0\}=(\mathbb{R}_+)^2,\\  \mathcal{D}_2&=\{(s_1,s_2)\in\mathcal{D}, us_2-s_1\geq0\},\\ \mathbf{A}_0&=\{(s_1,s_2)\in\mathcal{D}, s_1=-us_2\},\quad\text{ and}\\ \mathbf{a}_0&=\{(s_1,s_2)\in\mathcal{D}, s_1=0\}.\ea Note that $\partial\mathcal{D}_1=\mathbf{A}\cup\mathbf{a_0}\cup\mathbf{0}$ and that $\partial\mathcal{D}_2=\mathbf{a}\cup\mathbf{A_0}\cup\mathbf{0}$). Let us first assume that $S$ starts in $\mathcal{D}_1$. We consider the diffusion process $H$ which is solution of the following stochastic differential equation: \ba dH^1_t&=dB^1_t-\frac{1}{2H^1_t}dt\\
dH^2_t&=dB^2_t.\ea Then $H^1$ and $H^2$ are independent diffusion processes defined up to their respective hitting time of origin $T^{H^1}_0$ and $T^{H^2}_0$. Let us define for all $x\in\mathbb{R}_+^*$, $Q_1(x)=\frac{\ln(x)}{2}.$ Then from Girsanov Theorem extension, for $i\in\{1,2\}$, for all $t>0$, for all bounded Borel function $f$ on $C([0,t],\mathbb{R}_+)$ and for all $x\in\mathbb{R}_+^*$, \ben\label{GirsanovHW}\mathbb{E}^{H^i}_x\left(f(w)\mathbf{1}_{t<T_0(w)}\right)=\mathbb{E}^{W}_x
\left(f(w)\mathbf{1}_{t<T_0(w)}e^{G_i(t)}\right),\een where $W$ is a $1-$dimensional brownian motion and \ba G_1(t)&=Q_1(x)-Q_1(w_t)-\frac{1}{2}\int_0^t\left((Q_1'(w_u))^2-Q_1''(w_u)\right)du,\\G_2(t)&=0.\ea Therefore the law of the couple of stopping times $(T^{H^1}_0,T^{H^2}_0)$ is equivalent to the Lebesgue measure on $]0,\infty[\otimes]0,\infty[$. We now consider the diffusion processes $S$ and $H$ starting from $s\in\mathcal{D}_1$ and stopped when they reach $\partial\mathcal{D}_1=\mathbf{A}\cup\mathbf{a}_0\cup \mathbf{0}$ and define for all $(x_1,x_2)\in(\mathbb{R}_+)^2$, $Q_2(x_1,x_2)=Q_1(x_1)-Q(x_1,x_2)$ (where $Q$ is given in the neutral case in Equation \eqref{equationQ} and in the general case in Equation \eqref{equationQnonneutre}). Then from the extended Girsanov theory again, we have for all bounded Borel function $f$ on $C([0,t],(\mathbb{R}_+)^2)$ and for all $s\in\mathcal{D}_1\setminus\mathbf{0}$, $$\mathbb{E}^{S}_s\left(f(w)\mathbf{1}_{t<T_{\partial \mathcal{D}_1}(w)}\right)=\mathbb{E}^{H}_s
\left(f(w)\mathbf{1}_{t<T_{\partial \mathcal{D}_1}(w)}e^{R_t}\right)$$ where $R_t=Q_2(s)-Q_2(w_t)-\frac{1}{2}\int_0^t\left(\left|\nabla Q_2(w_u)\right|^2-\Delta Q_2(w_u)\right)du.$ Now $R_{t\wedge T_{\mathbf{A}}\wedge T_{\mathbf{a}_0}}$ is well defined, which gives for all $s\in\mathcal{D}_1$: \be\mathbb{E}^S_s\left[f(w)\mathbf{1}_{t<T_{\mathbf{0}}}(w)\right]
=\mathbb{E}^{H}_s\left[f(\omega)\mathbf{1}_{t<T_{\mathbf{0}}(\omega)}\exp(R_{t\wedge T_{\mathbf{A}}\wedge T_{\mathbf{a}_0}})\right]\ee Then from Equation \eqref{GirsanovHW}, for any $s\in\mathcal{D}_1$,\ben\label{eqAB0}\mathbb{P}^S_s(T_{\mathbf{A}}\wedge T_{\mathbf{a}_0}<T_{\mathbf{0}})=1.\een Now, in the neutral case, the proportion $1-X$ of allele $a$ is a bounded martingale, from Corollary \ref{corNX}, which gives that starting from any $s\in\mathbf{A}_0\subset\mathcal{D}_1$, \ba\mathbf{E}^S_{s}\left(1-X_{T_{\mathbf{A}}\wedge T_{\mathbf{a}_0}}\right)&=\mathbb{P}^S_s(T_{\mathbf{A}}> T_{\mathbf{a}_0})\times\frac{1-\cos(\frac{\pi}{\sqrt{2}})}{2}
=\mathbb{E}^S_s(1-X_0)=\frac{1+\cos(\frac{\pi}{\sqrt{2}})}{2}.\ea Finally, the same work can be done on $\mathcal{D}_2$ by symmetry, which gives that for all $s\in\mathcal{D}_2$, \ben\label{eqBA0}\mathbb{P}^S_s(T_{\mathbf{a}}\wedge T_{\mathbf{A}_0}<T_{\mathbf{0}})=1,\een and for all $s\in\mathbf{a}_0\subset\mathcal{D}_2$,
\be\mathbb{P}^S_s(T_{\mathbf{a}}> T_{\mathbf{A}_0})=\frac{1+\cos(\frac{\pi}{\sqrt{2}})}{1-\cos(\frac{\pi}{\sqrt{2}})}.\ee Then the number of back and forths of $S$ between $\mathbf{A}_0$ and $\mathbf{a}_0$ follows a geometrical law with parameter $\frac{1+\cos(\frac{\pi}{\sqrt{2}})}{1-\cos(\frac{\pi}{\sqrt{2}})}$ and is therefore almost surely finite. What is more, from Equations \eqref{eqAB0} and \eqref{eqBA0}, each time the diffusion $S$ reaches $\mathbf{A}_0$ (resp. $\mathbf{a}_0$), it goes to $\mathbf{a}_0$ or $\mathbf{A}$ (resp. $\mathbf{A}_0$ or $\mathbf{a}$) before $\mathbf{0}$ almost surely, which gives the result for the neutral case. Now note from Equation \eqref{changementvariables} that $S_t\in\mathbf{M}$ if and only $X_t=1/2$. Therefore $(ii)$ is obvious in the neutral case since by symmetry, for all $s\in\mathbf{M}$, $\mathbb{P}^S_s(T_{\mathbf{a}}<T_{\mathbf{A}})=1/2$ which gives that $\mathbb{P}^S_s(T_{\mathbf{a}}<T_{\mathbf{0}})=\mathbb{P}^S_s(T_{\mathbf{A}}<T_{\mathbf{0}})=1/2>0$ from $(i)$. Finally for $(iii)$, using Girsanov theory as in the proof of $(i)$, for all $s\in\mathcal{D}\setminus\partial\mathcal{D}$, $\mathbb{P}^S_s(T_{\mathbf{M}}<T_{\mathbf{0}})=\mathbb{P}^S_s(T_{\mathbf{M}}<\infty)>0$. Now by Markov's Property, for all $s\in\mathcal{D}\setminus\partial\mathcal{D}$, we get  \ba\mathbb{P}^S_s(T_{\mathbf{a}}<T_{\mathbf{0}})&\geq\mathbb{P}^S_s(T_{\mathbf{a}}<T_{\mathbf{0}}, T_{\mathbf{M}}<\infty)=\frac{\mathbb{P}^S_s(T_{\mathbf{M}}<\infty)}{2}\quad\text{from $(i)$}.\ea Similarly, for all $s\in\mathcal{D}\setminus\partial\mathcal{D}$, $\mathbb{P}^S_s(T_{\mathbf{A}}<T_{\mathbf{0}})>0$. 

In the non-neutral case, by Girsanov theory again, the law of the process $(S^1,S^2)$ starting from $(s_1,s_2)\in\mathcal{D}$ is equivalent on $C([0,t],\mathcal{D})$ to the law of a process $(\tilde{S}^1,\tilde{S}^2)$ starting from $(s_1,s_2)$ and that is neutral, which gives all the results.
\end{proof}

\subsection{Quasi-stationary behavior of $S$}

In \cite{CattiauxMeleard2009}, the study of quasi-stationary distributions has been developped for diffusion processes of the form \eqref{diffkolmo}. In particular, existence and uniqueness is given under some conditions on the diffusion coefficient $Q$. Let us prove that these conditions are satisfied in our case.

\begin{prop}\label{PropQnorme} 
\begin{description}
\item[$(i)$] There exists a constant $C$ such that for all $s=(s_1,s_2)\in \mathcal{D}$,
$$\vert\nabla Q(s)\vert^2-\Delta Q(s)\geq C.$$
\item[$(ii)$] $\inf\{\vert\nabla Q(s)\vert^2-\Delta Q(s), |s|\geq R, s\in\mathcal{D}\}\rightarrow +\infty$ when $R\rightarrow\infty$.
\end{description}
\end{prop}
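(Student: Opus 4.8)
The plan is to treat the neutral case (the non-neutral one being handled in the appendix, as throughout the paper) and to exploit the fact that, in polar coordinates, the potential $Q$ of Equation \eqref{equationQ} separates into a radial and an angular part. Writing $r=\vert s\vert=\sqrt{(s_1)^2+(s_2)^2}$ and letting $\Theta$ be the polar angle (so $\Theta=\arctan(s_2/s_1)$ when $s_1\geq0$, $\Theta=\arctan(s_2/s_1)+\pi$ when $s_1\leq0$, and $n=\gamma r^2/2$), both branches of \eqref{equationQ} take the single form
$$Q=f(r)+g(\Theta),\quad f(r)=\ln r-(\beta-\delta)\frac{r^2}{4}+\frac{\alpha\gamma}{16}r^4,\quad g(\Theta)=\frac12\ln\bigl(\sin(\sqrt2\,\Theta)\bigr),$$
where $\sqrt2\,\Theta\in[0,\pi]$ on $\mathcal D$ so that $\sin(\sqrt2\,\Theta)\geq0$. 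Checking that the two branches $s_1\geq0$ and $s_1\leq0$ of \eqref{equationQ} merge into this expression is immediate once $\Theta$ is taken to be the genuine polar angle.

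Since $Q$ separates, the polar formulas $\vert\nabla Q\vert^2=(\partial_rQ)^2+r^{-2}(\partial_\Theta Q)^2$ and $\Delta Q=\partial_{rr}Q+r^{-1}\partial_rQ+r^{-2}\partial_{\Theta\Theta}Q$ yield
$$\vert\nabla Q\vert^2-\Delta Q=\Phi(r)+\frac{1}{r^2}\Psi(\Theta),\quad \Phi(r)=(f')^2-f''-\frac{f'}{r},\quad \Psi(\Theta)=(g')^2-g''.$$
I would first compute the angular part: a direct differentiation gives $\Psi(\Theta)=\tfrac32\cot^2(\sqrt2\,\Theta)+1\geq1$, so the contribution $r^{-2}\Psi(\Theta)$ is non-negative on all of $\mathcal D$ (and in fact blows up as $\Theta\to0$ or $\Theta\to\pi/\sqrt2$, i.e.\ towards the absorbing rays $\mathbf A$ and $\mathbf a$). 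Next I would expand the radial part and collect powers of $r$; the constant terms cancel and one obtains
$$\Phi(r)=\frac{1}{r^2}+\Bigl(\frac{(\beta-\delta)^2}{4}-\frac{\alpha\gamma}{2}\Bigr)r^2-\frac{(\beta-\delta)\alpha\gamma}{4}r^4+\frac{\alpha^2\gamma^2}{16}r^6.$$

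By Hypothesis \eqref{hyp1}, in the neutral case $g(z)=\alpha(z_1+z_2+z_3)^2>0$ forces $\alpha>0$, so the leading coefficient $\alpha^2\gamma^2/16$ is strictly positive. Hence $\Phi$ is smooth on $(0,\infty)$ with $\Phi(r)\to+\infty$ as $r\to0^+$ (through the $1/r^2$ term) and as $r\to+\infty$ (through the $r^6$ term); being continuous and coercive at both ends, $\Phi$ attains a finite minimum and is bounded below by some constant $C$. Together with $r^{-2}\Psi\geq0$ this proves $(i)$. For $(ii)$, I would observe that for $r$ large enough $\Phi$ is increasing, so that $\inf\{\vert\nabla Q(s)\vert^2-\Delta Q(s):\vert s\vert\geq R,\,s\in\mathcal D\}\geq\inf_{r\geq R}\Phi(r)=\Phi(R)\to+\infty$ as $R\to\infty$, the angular term only helping. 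I expect the main obstacle to be not this neutral computation but the non-neutral case of the appendix, where $Q$ no longer separates in $(r,\Theta)$ and one must check that the coercivity of the radial part dominates uniformly in $\Theta$ the extra angle-dependent drift terms.
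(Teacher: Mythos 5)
Your neutral-case computation is correct and is essentially the paper's own argument: the paper writes out $F(s)=\vert\nabla Q(s)\vert^2-\Delta Q(s)$ explicitly, and its four terms are exactly your $\Phi(r)+\frac{1}{r^{2}}\Psi(\Theta)$ (the two tangent terms combine to $\frac{1}{r^{2}}\left(\frac{3}{2}\cot^{2}(\sqrt{2}\Theta)+1\right)$ and the rest is your radial polynomial plus $1/r^{2}$), after which it concludes boundedness below and coercivity in $r=\vert s\vert$ just as you do. The non-neutral case, which you defer, is treated in the paper's Appendix A.3 along precisely the lines you anticipate: under \eqref{hyp1} one gets $(q_1(s))^2+(q_2(s))^2\geq C_1\vert s\vert^{6}$ while $\frac{\partial q_1}{\partial s_1}(s)+\frac{\partial q_2}{\partial s_2}(s)\leq C_2\vert s\vert^{4}$ uniformly in the angle, so the radial coercivity dominates the angle-dependent terms.
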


\begin{proof} Let us define $F(s)=\vert\nabla Q(s)\vert^2-\Delta Q(s)$ for all $s\in \mathcal{D}$. In the neutral case, we find: \ba F(s)&=\frac{1}{2((s_1)^2+(s_2)^2)\tan^2\left(\sqrt{2}\arctan\frac{s_2}{s_1}\right)}\\&+((s_1)^2+(s_2)^2)\left[\frac{(\beta-\delta-\frac{\alpha\gamma}{2}((s_1)^2+(s_2)^2)^2}{4}+\frac{1}{((s_1)^2+(s_2)^2)^2}\right]\\&
-((s_1)^2+(s_2)^2)\frac{\alpha\gamma}{2}\\&
+\frac{1}{(s_1)^2+(s_2)^2}\frac{1+\tan^2\left(\sqrt{2}\arctan\frac{s_2}{s_1}\right)}{\tan^2\left(\sqrt{2}\arctan\frac{s_2}{s_1}\right)}\\&
\geq C \quad\quad\text{clearly}.\ea We also have $$F(s)\geq ((s_1)^2+(s_2)^2)\left[\frac{(\beta-\delta-\frac{\alpha\gamma}{2}((s_1)^2+(s_2)^2)^2}{4}+\frac{1}{(s_1)^2+(s_2)^2}-\frac{\alpha\gamma}{2}\right]
$$ which gives $(ii)$.
The proof of the two points in the non-neutral case is given in Appendix \ref{appprop}.
\end{proof}

As in \cite{CattiauxMeleard2009}, the quasi-stationary behavior of $S$ is first studied respectively to the absorbing set $\partial \mathcal{D}$ and then for the absorbing set $\mathbf{0}$ that corresponds to the extinction of the population. 

\begin{thm}\label{theoQSD1}
\begin{description}
\item[$(i)$] There exists a unique distribution $\nu$ on $\mathcal{D}\setminus\partial \mathcal{D}$ such that for all $E\subset\mathcal{D}\setminus\partial \mathcal{D}$ and all $t\geq0$, $$\mathbb{P}^S_{\nu}(S_t\in E|T_{\partial \mathcal{D}}>t)=\nu(E).$$ What is more, this distribution is a Yaglom limit for $S$, i.e. for all $s\in\mathcal{D}\setminus\partial \mathcal{D}$, $$\underset{t\rightarrow\infty}{\lim}\mathbb{P}^S_s(S_t\in E|T_{\partial \mathcal{D}}>t)=\nu(E).$$
\item[$(ii)$]There exists a unique probability measure $\nu_0$ on $\mathcal{D}\setminus\mathbf{0}$ such that for all $s\in\mathcal{D}\setminus\partial \mathcal{D}$ and for all $E\subset\mathcal{D}\setminus\mathbf{0}$, \ben\label{existqsd}\underset{t\rightarrow\infty}{\lim}\mathbb{P}^S_s(S_t\in E\vert T_{\mathbf{0}}>t)=\nu_0(E).\een
\end{description}
\end{thm}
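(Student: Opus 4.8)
The plan is to recognize $S$ as the Kolmogorov diffusion $dS_t=dW_t-\nabla Q(S_t)\,dt$ of Proposition~\ref{PropQnonneutre} and to reduce both claims to the spectral machinery developed in \cite{CattiauxMeleard2009}. For $(i)$ the process is killed on the whole boundary $\partial\mathcal{D}=\mathbf{A}\cup\mathbf{a}\cup\mathbf{0}$, which by Theorem~\ref{theoremtemps}$(i)$ amounts to killing at the fixation time $T_{\mathbf{A}}\wedge T_{\mathbf{a}}$. First I would introduce the reversible measure $e^{-2Q}\,dx$ and perform the ground-state substitution $u\mapsto e^{-Q}u$, which conjugates the generator $\tfrac12\Delta-\nabla Q\cdot\nabla$ into the Schr\"odinger operator $-\tfrac12\Delta+V$ on $L^2(\mathcal{D})$ with Dirichlet conditions on $\partial\mathcal{D}$, where $V=\tfrac12\bigl(|\nabla Q|^2-\Delta Q\bigr)$. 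Proposition~\ref{PropQnorme} asserts precisely that $V$ is bounded below and tends to $+\infty$ at infinity, i.e.\ that $V$ is confining; this is the exact hypothesis under which \cite{CattiauxMeleard2009} show that the operator has compact resolvent, hence a purely discrete spectrum $\lambda_1<\lambda_2\le\cdots$ with a spectral gap and a positive principal eigenfunction $\eta_1$. Granting this, claim $(i)$ follows from their results: the probability measure with density proportional to $\eta_1\,e^{-Q}$ is the unique quasi-stationary distribution $\nu$ relative to $T_{\partial\mathcal{D}}$, and the spectral gap yields the Yaglom convergence by expanding the sub-Markovian semigroup in eigenfunctions and retaining the dominant mode.

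For $(ii)$ the decisive structural fact is again Theorem~\ref{theoremtemps}$(i)$: from any $s\in\mathcal{D}\setminus\mathbf{0}$ one has $T_{\mathrm{fix}}:=T_{\mathbf{A}}\wedge T_{\mathbf{a}}<T_{\mathbf{0}}$ almost surely, so conditioning only on non-extinction $\{T_{\mathbf{0}}>t\}$ does \emph{not} forbid fixation. Since $\mathbf{A}\cup\mathbf{0}$ and $\mathbf{a}\cup\mathbf{0}$ are absorbing, after $T_{\mathrm{fix}}$ the process is confined to one of the two rays $\mathbf{A}$, $\mathbf{a}$, on each of which $S$ degenerates to a one-dimensional logistic Feller diffusion in the single-allele population size, absorbed only at $\mathbf{0}$. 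I would therefore first record that each of these edge diffusions, being of the type studied in \cite{CattiauxCollet...2009}, admits a unique quasi-stationary distribution $\nu_0^{\mathbf{A}}$, resp.\ $\nu_0^{\mathbf{a}}$, with principal decay rate $\theta_{\mathbf{A}}$, resp.\ $\theta_{\mathbf{a}}$, the uniform exponential moment $\sup_x\mathbb{E}_x(e^{\lambda T_{\mathbf{0}}})<\infty$ of Proposition~\ref{extinction} guaranteeing the required integrability.

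The heart of the argument is to view $(S_t)$ as a single diffusion on $\mathcal{D}\setminus\mathbf{0}$ absorbed at the \emph{single point} $\mathbf{0}$; the limit $\nu_0$ is then the Yaglom limit for this absorption, and its independence of $s$ is the standard feature of such limits \emph{once} existence and uniqueness have been secured for this degenerate object, whose generator is two-dimensional in the bulk but reduces to the one-dimensional edge generators on the invariant rays $\mathbf{A}$ and $\mathbf{a}$. To secure them I would decompose, by the strong Markov property at $T_{\mathrm{fix}}$,
\[
\mathbb{P}^S_s(S_t\in E\mid T_{\mathbf{0}}>t)=\frac{\mathbb{E}^S_s\!\left[\mathbf{1}_{\{T_{\mathrm{fix}}\le t\}}\,\mathbb{P}^S_{S_{T_{\mathrm{fix}}}}\!\left(S_{t-T_{\mathrm{fix}}}\in E,\ T_{\mathbf{0}}>t-T_{\mathrm{fix}}\right)\right]+\mathbb{E}^S_s\!\left[\mathbf{1}_{\{T_{\mathrm{fix}}>t\}}\,\mathbf{1}_{\{S_t\in E\}}\right]}{\mathbb{P}^S_s(T_{\mathbf{0}}>t)},
\]
and compare the competing rates $\lambda_1$ (interior, from $(i)$, governing $\mathbb{P}_s(T_{\mathrm{fix}}>t)$), $\theta_{\mathbf{A}}$ and $\theta_{\mathbf{a}}$. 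The exponential moment of Proposition~\ref{extinction} supplies the uniform-in-$s$ domination needed to pass to the limit, while the interior quasi-stationary distribution $\nu$ of $(i)$ identifies the law of the fixation location $S_{T_{\mathrm{fix}}}$ in the long-survival ensemble independently of $s$, which is what ultimately forces $\nu_0$ to be independent of the starting point. In the neutral case the two edge diffusions coincide by symmetry, so $\theta_{\mathbf{A}}=\theta_{\mathbf{a}}$ and $\nu_0$ is the resulting symmetric mixture of $\nu_0^{\mathbf{A}}$ and $\nu_0^{\mathbf{a}}$; the non-neutral case reduces to the neutral one by the Girsanov change of measure already used in Theorem~\ref{theoremtemps}.

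The step I expect to be the main obstacle is precisely this rate comparison and gluing: showing that the minimal edge rate dominates the normalization, $\mathbb{P}_s(T_{\mathbf{0}}>t)\sim C(s)\,e^{-\min(\theta_{\mathbf{A}},\theta_{\mathbf{a}})t}$, so that the second (still-unfixed, interior) term above is asymptotically negligible, and that the two edge contributions combine into a single $s$-independent limit. This requires a genuine comparison between the principal Dirichlet eigenvalue $\lambda_1$ on the two-dimensional wedge and the eigenvalues $\theta_{\mathbf{A}},\theta_{\mathbf{a}}$ of its one-dimensional faces, together with a careful treatment of the degenerate geometry where the bulk diffusion meets the invariant edges; the confinement estimates of Proposition~\ref{PropQnorme} and the uniform exponential moment of Proposition~\ref{extinction} are the inputs that make this comparison tractable within the framework of \cite{CattiauxMeleard2009}.
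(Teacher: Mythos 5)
Your treatment of $(i)$ is essentially the paper's: Propositions \ref{extinction} and \ref{PropQnorme} are exactly the inputs used to check the set of assumptions $(H)$ of \cite{CattiauxMeleard2009}, and the ground-state conjugation you describe (turning $\tfrac12\Delta-\nabla Q\cdot\nabla$ into $-\tfrac12\Delta+V$ with $V=\tfrac12\bigl(|\nabla Q|^2-\Delta Q\bigr)$ confining) is precisely the mechanism inside the result (Proposition B.12 there) that the paper invokes. This part is correct.

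For $(ii)$ you depart from the paper, which simply writes $\mathbb{P}_s(S_t\in E\vert T_{\mathbf{0}}>t)=\frac{\mathbb{P}_s(S_t\in E)}{\mathbb{P}_s(T_{\partial \mathcal{D}}>t)}\cdot\frac{\mathbb{P}_s(T_{\partial \mathcal{D}}>t)}{\mathbb{P}_s(T_{\mathbf{0}}>t)}$ and appeals to the convergence of each factor established in \cite{CattiauxMeleard2009}, together with Theorem \ref{theoremtemps}; that route does not prejudge which decay rate wins. Your strong-Markov decomposition at $T_{\mathrm{fix}}$ is a legitimate alternative skeleton, but the way you propose to close it contains a genuine error: you make the proof hinge on showing $\mathbb{P}_s(T_{\mathbf{0}}>t)\sim C(s)\,e^{-\min(\theta_{\mathbf{A}},\theta_{\mathbf{a}})t}$ with the still-unfixed interior term asymptotically negligible. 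That amounts to assuming $\lambda_1>\min(\theta_{\mathbf{A}},\theta_{\mathbf{a}})$, which is false in general and would force $\nu_0(\mathcal{D}\setminus\partial\mathcal{D})=0$ in every case, i.e.\ no long-time coexistence of the alleles, ever. The central message of Section \ref{sectionnumerical} (overdominance, separate niches) is that $\nu_0$ can charge the interior, since $\nu_X(]0,1[)=\nu_0(\mathcal{D}\setminus\partial\mathcal{D})>0$ there; in those regimes the interior rate is the dominant one and your ``main obstacle'' step is not merely hard but unprovable. A correct argument must keep all orderings of $\lambda_1$, $\theta_{\mathbf{A}}$, $\theta_{\mathbf{a}}$ in play, prove that the ratio $\mathbb{P}_s(T_{\partial\mathcal{D}}>t)/\mathbb{P}_s(T_{\mathbf{0}}>t)$ converges to a limit in $[0,1]$ whatever that ordering is, and identify $\nu_0$ as the corresponding (possibly interior-charging) mixture; this is what the analysis of \cite{CattiauxMeleard2009} on the enlarged state space $\mathcal{D}\setminus\mathbf{0}$, with only $\mathbf{0}$ absorbing, accomplishes. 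Your assertion that in the neutral case $\nu_0$ is automatically the symmetric mixture of the two edge quasi-stationary distributions inherits the same unproved assumption.
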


\begin{proof} The set of assumptions $(H)$ of \cite{CattiauxMeleard2009} (p. $816-818$) is satisfied from Propositions \ref{extinction} and \ref{PropQnorme}, which gives $(i)$ from \cite{CattiauxMeleard2009} (Proposition $B.12$). $(ii)$ is obtained as in \cite{CattiauxMeleard2009} by using Theorem \ref{theoremtemps} and decomposing: \ben\label{decompofinal} \mathbb{P}_s(S_t\in E\vert T_{\mathbf{0}}>t)=\frac{\mathbb{P}_s(S_t\in E)}{\mathbb{P}_s(T_{\partial \mathcal{D}}>t)}\frac{\mathbb{P}_s(T_{\partial \mathcal{D}}>t)}{\mathbb{P}_s(T_{\mathbf{0}}>t)}.\een\end{proof}

\bigskip 
Note that the quasi-stationary behavior of the diffusion process $((N_t,X_t),t\geq0)$ conditioned on non extinction is obtained easily since $$\mathbb{P}^{N,X}_{(n,x)}((N_t,X_t)\in F\vert N_t>0)=\mathbb{P}^S_s(S_t\in E\vert T_\mathbf{0}>t),$$ if $s=\psi(n,x)$ and $E=\psi(F)$ where $\psi$ is defined in Proposition \ref{cgtvariableS}. Let us remind that we are interested in studying the possibility of a long-time coexistence of the two alleles $A$ and $a$ in the population conditioned on non-extinction. This means that we would like to approximate the quasi-stationary distribution $\nu_X$ such that \ben\label{distribfigures}\nu_X(.):=\underset{t\rightarrow\infty}{\lim}\mathbb{P}^{N,X}_{(n,x)}(X_t\in .\vert N_t>0)\een and we are interested in knowing whether $\nu_X(]0,1[)=0$ or not. Indeed, if $\nu_X(]0,1[)\neq0$ we can observe a long-time coexistence of the two alleles in the population conditioned on non-extinction whereas if $\nu_X(]0,1[)=0$, no such coexistence is possible. Note that $\nu_X(]0,1[)=\nu_0(\mathcal{D}\setminus\partial\mathcal{D})$. For a haploid population with clonal reproduction, \cite{CattiauxMeleard2009} proved that in a pure competition case, i.e. when every individual competes with every other one, no coexistence of alleles is possible. However, in our diploid population, this result should not be true anymore. Indeed, from Equation \eqref{decompofinal}, $$\mathbb{P}_s(S_t\in\mathcal{D}\setminus\partial\mathcal{D})=\frac{\mathbb{P}_s(T_{\partial\mathcal{D}}>t)}{\mathbb{P}_s(T_{\mathbf{0}}>t)},$$ therefore the possibility of coexistence of the two alleles relies on the fact that the time spent by the population in $\mathcal{D}\setminus\partial\mathcal{D}$ is not negligible compared to the time spent in $\mathcal{D}\setminus\mathbf{0}$. In a diploid population, if the heterozygotes are favored compared to homozygous individuals (this situation is called overdominance), they can make the coexistence period last longer than the remaining lifetime of the population once one of the alleles has disappeared. Similarly, as in \cite{CattiauxMeleard2009}, cooperation can favor the long-time coexistence of alleles in the population conditioned on non-extinction. These biological and mathematical intuitions are now sustained by numerical results. 

\section{Numerical results}\label{sectionnumerical}

Numerical simulations of $\nu_X$ are obtained following the Fleming-Viot algorithm introduced in \cite{Burdzyetal1996} and which has been extensively studied in the articles \cite{Villemonais2011} and \cite{Villemonais2012}. This approach consists in approximating the conditioned distribution $$\mathbb{P}^{N,X}_{(n,x)}((N_t,X_t)\in .\vert T_0>t)$$ by the empirical distribution of an interacting particle system. More precisely, we consider a large number $k$ of particles, that all start from a given $(n,x)\in\mathbb{R}_+^*\times]0,1[$ and evolve independently from $(n,x)$ and from each other according to the law of the diffusion process $(N,X)$ defined by the diffusion equation \eqref{diffusionNXnoneutre}, until one of them hits ${N=0}$. At that time, the absorbed particle jumps to the position of one of the remaining $k-1$ particles, chosen uniformly at random among them. Then the particles evolve independently according to the law of the diffusion process $(N,X)$ until one of them reaches ${N=0}$, and so on. Theorem $1$ of \cite{Villemonais2012} gives the convergence when $k$ goes to infinity of the empirical distribution of the $k$ particles at time $t$ toward the conditioned distribution $\mathbb{P}^{N,X}_{(n,x)}((N_t,X_t)\in .\vert T_0>t)$. Here we present three biologically relevant examples. For each case, we set $k=2000$ and plot the empirical distribution at a large enough time $T$ of the $2000$ proportions of allele $A$ given by the respective positions of the $2000$ particles, starting from $(n,x)=(10,1/2)$. First, we consider a neutral competitive case, in which each individual is in competition with every other one, independently from their genotypes. Here, the quasi-stationary distribution $\nu_X$ of the proportion $X$ is a sum of two Dirac functions in $0$ and $1$ (Figure \ref{figureQSDneutre}), i.e. alleles $A$ and $a$ do not coexist in a long time limit. 

\begin{figure}[ht]\center
 \scalebox{0.25}{\includegraphics[trim=0cm 0cm 0cm 0cm,clip]{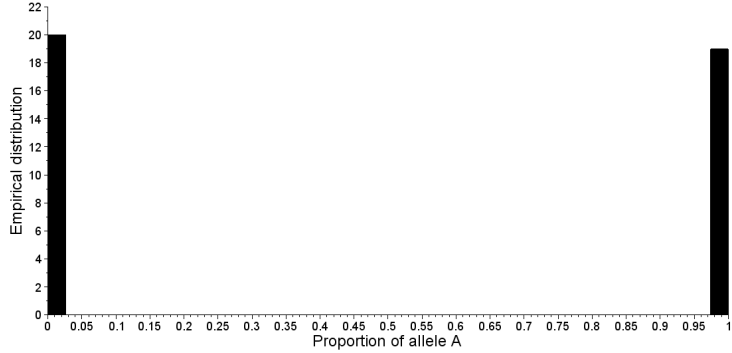}}
 \caption[Quasi-stationary distribution in a neutral competitive case]{Approximation of the quasi-stationary distribution $\nu_X$ of the proportion $X$ of allele $A$ (Equation \eqref{distribfigures}), in a neutral competitive case. In this figure, $\beta_i=1$, $\delta_i=0$, and $\alpha_{ij}=0.1$ for all $i$, $j$, and $T=40$.}\label{figureQSDneutre}
\end{figure}

Second (Figure \ref{figurecompetover}), we show an overdominance case: every individual competes equally with every other ones but heterozygous individuals are favored compared to homozygotes, as their reproduction rate is higher. In this case, the quasi-stationary distribution $\nu_X$ charges only points of $]0,1[$, i.e. alleles $A$ and $a$ seem to coexist with probability $1$. This behavior is specific to the Mendelian reproduction: in \cite{CattiauxMeleard2009}, the authors proved that no coexistence of alleles is possible in a haploid population with clonal reproduction, if every individual is in competition with every other one. 

\begin{figure}[ht]\center
\scalebox{0.25}{\includegraphics[trim=0cm 0cm 0cm 0cm,clip]{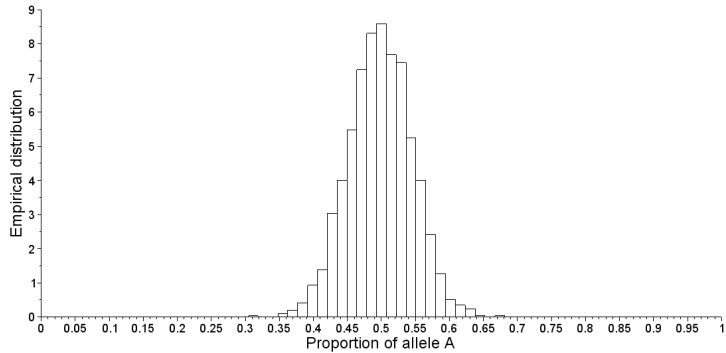}}
\caption[Quasi-stationary distribution in an overdominance case]{Approximation of the quasi-stationary distribution $\nu_X$ of the proportion $X$ of allele $A$ (Equation \eqref{distribfigures}), in an overdominance case. In this figure, $\beta_i=1$ for all $i\neq2$, $\beta_2=5$, $\delta_i=0$ for all $i$, $\alpha_{ij}=0.1$ for all $(i,j)$, and $T=100$.}\label{figurecompetover}
\end{figure}

Third (Figure \ref{figureQSDniches}), we show a case in which individuals only compete with individuals with same genotype; this can happen if different genotypes feed differently and have different predators. In this case, we can observe either a coexistence of the two alleles $A$ and $a$ or an elimination of one of the alleles, since the distribution $\nu_X$ charges both $\{0\}\cup\{1\}$ and $]0,1[$. 

\begin{figure}[ht]\center
\scalebox{0.26}{\includegraphics[trim=0cm 0cm 0cm 0cm,clip]{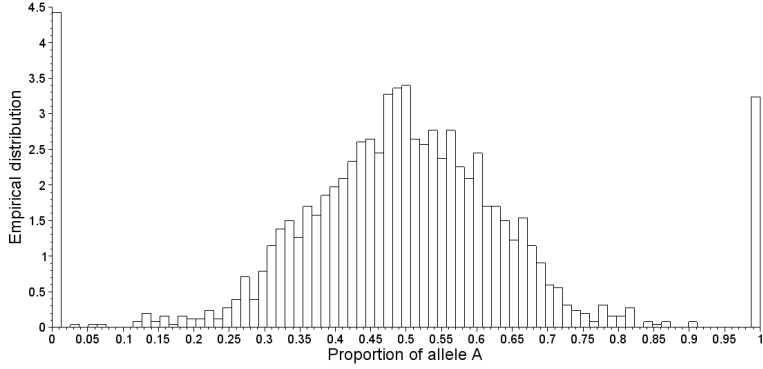}}
\caption[Quasi-stationary distribution in a separate niches case]{Approximation of the quasi-stationary distribution $\nu_X$ of the proportion $X$ of allele $A$ (Equation \eqref{distribfigures}), in a case where individuals with different genotypes do not compete or cooperate with each other. In this figure, $\beta_i=1$, $\delta_i=0$, $\alpha_{ii}=0.1$ for all $i$, $\alpha_{ij}=0$ for all $i\neq j$, and $T=2500$.}\label{figureQSDniches}
\end{figure}

\appendix

\section{Calculations in the general case}\label{appendixQnonneutre}

\subsection{Form of the function $Q$}\label{appQ}

If $\alpha$ is symmetric, we use Equations \eqref{formuleq1}, \eqref{formuleq2} and \eqref{formuleXNappendix} and search a function $Q$ such that $\frac{\partial Q(s)}{\partial s_1}=q_1(S)$ and $\frac{\partial Q(s)}{\partial s_2}=q_2(S)$. After calculating the partial derivatives of functions of the form: $$(s_1,s_2)\mapsto\left\{\begin{array}{l}
((s_1)^2+(s_2)^2)^k\cos^l\left(\sqrt{2}\arctan\left(\frac{s_2}{s_1}\right)\right)\quad\text{if $s_1\geq0$}\\ 
((s_1)^2+(s_2)^2)^k\cos^l\left(\sqrt{2}\arctan\left(\frac{s_2}{s_1}+\pi\right)\right)\quad\text{if $s_1\leq0$}
\end{array}\right.$$ for $k\in\{1,2\}$ and $l\in\{1,2,3,4\}$, we find that \ben \label{equationQnonneutre}Q(s)=\left\{\begin{array}{l}
\frac{\ln((s_1)^2+(s_2)^2)}{2}+\frac{1}{2}\ln\left(\sin\left(\sqrt{2}\arctan\left(\frac{s_2}{s_1}\right)\right)\right)\\-\frac{(s_1)^2+(s_2)^2}{4}\left[\frac{\beta_1-\delta_1+2(\beta_2-\delta_2)+\beta_3-\delta_3}{4}\right.\\
\phantom{-\frac{(s_1)^2+(s_2)^2}{4}}\left.-\frac{(s_1)^2+(s_2)^2}{4}\gamma\frac{\alpha_{11}+4\alpha_{12}+2\alpha_{13}
+4\alpha_{23}+4\alpha_{22}+\alpha_{33}}{16}\right]\\
-((s_1)^2+(s_2)^2)\left[h(s)\frac{\beta_1-\delta_1-(\beta_3-\delta_3)}{8}+h(s)^2\frac{\beta_1-\delta_1-2(\beta_2-\delta_2)+\beta_3-\delta_3}{16}\right]\\
+\frac{((s_1)^2+(s_2)^2)^2}{16}\gamma h(s)\left[\frac{\alpha_{11}+2\alpha_{12}-2\alpha_{23}-\alpha_{33}}{4}+h(s)\frac{3\alpha_{11}-2\alpha_{13}-4\alpha_{22}+3\alpha_{33}}{8}\right.\\\phantom{\frac{((s_1)^2}{16}\gamma[}\left.+h(s)^2\frac{\alpha_{11}-2\alpha_{12}+2\alpha_{23}-\alpha_{33}}{4}+h(s)^3\frac{\alpha_{11}-4\alpha_{12}
+2\alpha_{13}-4\alpha_{23}+4\alpha_{22}+\alpha_{33}}{16}\right]\\
\quad\text{if $s_1\geq0$}\\ \\ 
\frac{\ln((s_1)^2+(s_2)^2)}{2}+\frac{1}{2}\ln\left(\sin\left(\sqrt{2}\left(\arctan\left(\frac{s_2}{s_1}\right)+\pi\right)\right)\right)\\-\frac{(s_1)^2+(s_2)^2}{4}\left[\frac{\beta_1-\delta_1+2(\beta_2-\delta_2)+\beta_3-\delta_3}{4}\right.\\
\phantom{-\frac{(s_1)^2+(s_2)^2}{4}}\left.-\frac{(s_1)^2+(s_2)^2}{4}\gamma\frac{\alpha_{11}+4\alpha_{12}+2\alpha_{13}
+4\alpha_{23}+4\alpha_{22}+\alpha_{33}}{16}\right]\\
-((s_1)^2+(s_2)^2)\left[h(s)\frac{\beta_1-\delta_1-(\beta_3-\delta_3)}{8}+h(s)^2\frac{\beta_1-\delta_1-2(\beta_2-\delta_2)+\beta_3-\delta_3}{16}\right]\\
+\frac{((s_1)^2+(s_2)^2)^2}{16}\gamma h(s)\left[\frac{\alpha_{11}+2\alpha_{12}-2\alpha_{23}-\alpha_{33}}{4}+h(s)\frac{3\alpha_{11}-2\alpha_{13}-4\alpha_{22}+3\alpha_{33}}{8}\right.\\\phantom{\frac{((s_1)^2}{16}\gamma[}\left.+h(s)^2\frac{\alpha_{11}-2\alpha_{12}+2\alpha_{23}-\alpha_{33}}{4}+h(s)^3\frac{\alpha_{11}-4\alpha_{12}
+2\alpha_{13}-4\alpha_{23}+4\alpha_{22}+\alpha_{33}}{16}\right]\\
\quad\text{if $s_1\leq0$}
\end{array}\right.\een where $$h(s)=\left\{\begin{array}{l}\cos\left(\sqrt{2}\arctan\left(\frac{s_2}{s_1}\right)\right) \quad \text{when $s_1\geq0$}\\
\cos\left(\sqrt{2}\left(\arctan\left(\frac{s_2}{s_1}\right)+\pi\right)\right) \quad \text{when $s_1\leq0$.}\end{array}\right.$$

\subsection{Form of the function $q$}\label{appq}

Therefore if $s_1\geq0$: \ban\label{formuleq1app} q_1(s)&=\frac{s_1}{(s_1)^2+(s_2)^2}-\frac{s_2}{(s_1)^2+(s_2)^2}\frac{1}{\sqrt{2}\tan\left(\sqrt{2}\arctan\left(\frac{s_2}{s_1}\right)\right)}\\&-s_1\left[\frac{\beta_1-\delta_1+2(\beta_2-\delta_2)+\beta_3-\delta_3}{8}\right.\\&\left.\quad-\frac{(s_1)^2+(s_2)^2}{4}\gamma\frac{\alpha_{11}+4\alpha_{12}+2\alpha_{13}+4\alpha_{23}+4\alpha_{22}+\alpha_{33}}{16}\right]\\&-2s_1\left[h(s)\frac{\beta_1-\delta_1-(\beta_3-\delta_3)}{8}+h(s)^2\frac{\beta_1-\delta_1-2(\beta_2-\delta_2)+\beta_3-\delta_3}{16}\right]\\&+s_1\frac{((s_1)^2+(s_2)^2)}{4}\gamma h(s)\left[\frac{\alpha_{11}+2\alpha_{12}-2\alpha_{23}-\alpha_{33}}{4}\right.\\&\quad+h(s)\frac{3\alpha_{11}-2\alpha_{13}-4\alpha_{22}+3\alpha_{33}}{8}\\&\quad+\left. h(s)^2\frac{\alpha_{11}-2\alpha_{12}+2\alpha_{23}-\alpha_{33}}{4}+h(s)^3\frac{\alpha_{11}\!-\!4\alpha_{12}+\!2\alpha_{13}-\!4\alpha_{23}+\!4\alpha_{22}\alpha_{33}}{16}\right]\\&-\sqrt{2}s_2\sin\left(\sqrt{2}\arctan\left(\frac{s_2}{s_1}\right)\right)\left[\frac{\beta_1-\delta_1-(\beta_3-\delta_3)}{8}\right.\\&\quad\quad\left.+h(s)\frac{\beta_1-\delta_1-2(\beta_2-\delta_2)+\beta_3-\delta_3}{8}\right]\\&+\frac{(s_1)^2+(s_2)^2}{16}\gamma\sqrt{2}s_2\sin\left(\sqrt{2}\arctan\left(\frac{s_2}{s_1}\right)\right)\left[\frac{\alpha_{11}+2\alpha_{12}-2\alpha_{23}-\alpha_{33}}{4}\right.\\&\quad+h(s)\frac{3\alpha_{11}-2\alpha_{13}-4\alpha_{22}+3\alpha_{33}}{4}\\&\quad+h(s)^2\frac{3(\alpha_{11}\!-2\alpha_{12}+\alpha_{23}\!-\alpha_{33})}{4}\\&\left.\quad+h(s)^3\frac{\alpha_{11}-4\alpha_{12}+2\alpha_{13}-4\alpha_{23}+4\alpha_{22}+\alpha_{33}}{4}\right].\ean We have similar formulas for $q_2$ and when $s_1\leq0$.

\subsection{Proof of Proposition \ref{PropQnorme}}\label{appprop}

Now $F(s)=\vert\nabla Q(s)\vert^2-\Delta Q(s)=(q_1(s))^2+(q_2(s))^2-\frac{\partial q_1}{\partial s_1}(s)-\frac{\partial q_2}{\partial s_2}(s).$
Besides, note that under \eqref{hyp1}, $\frac{\alpha_{11}+4\alpha_{12}+2\alpha_{13}+4\alpha_{23}+4\alpha_{22}+\alpha_{33}}{16}>0$. Therefore using Equations \eqref{formuleq1} and \eqref{formuleq2} we easily obtain that there exists a positive constant $C_1$ such that $(q_1(s))^2+(q_2(s))^2\geq C_1((s_1)^2+(s_2)^2)^3$. Finally, from Equation \eqref{formuleq1app}, we obtain after some calculations that there exists a positive constant $C_2$ such that $\frac{\partial q_1}{\partial s_1}(s)+\frac{\partial q_2}{\partial s_2}(s)\leq C_2((s_1)^2+(s_2)^2)^2$. Therefore Proposition \ref{PropQnorme} is true if $s_1\geq0$. If $s_1\leq0$, the result is true as well by symmetry.

\bigskip
\textbf{Acknowledgements:} I fully thank my Phd director Sylvie M\'el\'eard for suggesting me this research subject, and for her continual guidance during my work. I would also like to thank Denis Villemonais for his code and help for the simulation results. This article benefited from the support of the ANR MANEGE (ANR-09-BLAN-0215) and from the Chair ``Mod\'elisation
Math\'ematique et Biodiversit\'e" of Veolia Environnement - \'Ecole
Polytechnique - Museum National d'Histoire Naturelle - Fondation X.

\bibliographystyle{plainnat}
\bibliography{C:/Users/Camille/Dropbox/Travail/Biblio/mabiblio}

\end{document}